\newcommand{\ds}{\displaystyle}
\newcommand{\tensor}{\otimes}
\newcommand{\op}{\mathcal}
\newcommand{\cdc}{,\dots,}
\newcommand{\FT}{\mathsf{ft}}
\newcommand{\FTGK}{\mathsf{FT}}
\newcommand{\DM}{\mathsf{FT}(H_\ast(\overline{\op{M}}))}
\numberwithin{equation}{section}
\newtheorem{theorem}{Theorem}[section]
\theoremstyle{plain}
\newtheorem{corollary}[theorem]{Corollary}
\newtheorem{lemma}[theorem]{Lemma}
\theoremstyle{definition}
\newtheorem{definition}[theorem]{Definition}
\newtheorem{example}[theorem]{Example}
\newtheorem{remark}[theorem]{Remark}
\newtheorem{construction}[theorem]{Construction}
\begin{document}
\title{Toward a minimal model for $H_\ast(\overline{\op{M}})$} 
\author{Ben C. Ward}
\email{benward@bgsu.edu}

\maketitle
\begin{abstract} 
The modular operad $H_\ast(\overline{\op{M}}_{g,n})$ of the homology of Deligne-Mumford compactifications of moduli spaces of pointed Riemann surfaces has a minimal model governed by higher homology operations on the open moduli spaces $H_\ast(\op{M}_{g,n})$.  Using Getzler's elliptic relation, we give an explicit construction of the first family of such higher operations.

\end{abstract}

\section{Introduction}

Let $\overline{\op{M}}_{g,n}$ denote the Deligne-Mumford compactification of $\op{M}_{g,n}$, the moduli space of genus $g$ Riemann surfaces with $n$ marked points, regarded as smooth complex orbifolds.  Gluing marked points endows the collection $\overline{\op{M}}_{g,n}$ with the structure of a modular operad \cite{GeK2}.  We write $H_\ast(\overline{\op{M}}_{g,n})$ for the rational homology of the underlying spaces; this collection of graded $S_n$-modules carries the induced structure of a graded modular operad which we denote $H_\ast(\overline{\op{M}})$.

Write $\mathfrak{p}H_\ast(\op{M})$ for the collection of graded $S_n$-representations $H_
{\ast-6g-2n+6}(\op{M}_{g,n}, \mathbb{Q})$.  Such a collection is called a stable $\mathbb{S}$-module \cite{GeK2}.
If $A$ is a modular operad we write $\FTGK(A)$ for its Feynman transform, regarded as a $\mathfrak{K}$-twisted modular operad, and in particular a differential graded stable $\mathbb{S}$-module.
The following result follows from \cite[Proposition 6.11]{GeK2} and formality of $H_\ast(\overline{\op{M}})$, see also \cite[Theorem 28]{AWZ}.

\begin{theorem}\label{thm1} \cite{GeK2} The differential graded stable $\mathbb{S}$-modules $\FTGK(H_\ast(\overline{\op{M}}))$ and $\mathfrak{p}H_\ast(\op{M})$ are quasi-isomorphic.
\end{theorem}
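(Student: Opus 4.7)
The plan is to combine two inputs: a chain-level quasi-isomorphism due to Getzler-Kapranov which describes $\FTGK(C_\ast(\overline{\op{M}}))$ geometrically, and the modular operadic formality of $H_\ast(\overline{\op{M}})$.

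First I would unfold the definition of the Feynman transform: $\FTGK(A)((g,n))$ is a direct sum, over isomorphism classes of stable graphs $\Gamma$ of type $(g,n)$, of tensor products $\bigotimes_{v \in V(\Gamma)} A((g_v, n_v))$ (with the appropriate determinantal twists and automorphism coinvariants), equipped with the differential summing over edge contractions. Applied to $A = C_\ast(\overline{\op{M}})$ and combined with the K\"unneth theorem, this is a chain complex built from products of compactified moduli spaces indexed by stable graphs. The cited \cite{GeK2} Proposition 6.11 identifies this complex, up to a quasi-isomorphism and the dimension shift by $6g+2n-6$, with chains on the open moduli spaces $\op{M}_{g,n}$. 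Geometrically, this comes from the normal-crossings stratification of $\overline{\op{M}} \setminus \op{M}$ by closed strata $\overline{\op{M}}_\Gamma$, together with the identification of each open stratum $\op{M}_\Gamma$ with $\prod_v \op{M}_{g_v,n_v}$ modulo automorphisms of $\Gamma$; the Feynman transform reproduces the combinatorics of this stratification, and the quasi-isomorphism is essentially a version of Deligne's weight spectral sequence for the complement of a normal-crossings divisor, which degenerates rationally because $\overline{\op{M}}$ is smooth projective.

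Second, I would invoke formality of $H_\ast(\overline{\op{M}})$ as a dg modular operad over $\mathbb{Q}$, which is the content of the cited \cite{AWZ} Theorem 28. This yields a zig-zag of quasi-isomorphisms of dg modular operads $C_\ast(\overline{\op{M}}) \simeq H_\ast(\overline{\op{M}})$, the right-hand side carrying zero differential. Because in characteristic zero the Feynman transform is a homotopy functor on dg modular operads (it is a sum over graphs of tensor products of the input with the differential a perturbation of the bar-type differential, so it respects quasi-isomorphisms of the input), applying it sends the zig-zag to a zig-zag of quasi-isomorphisms $\FTGK(C_\ast(\overline{\op{M}})) \simeq \FTGK(H_\ast(\overline{\op{M}}))$ of dg stable $\mathbb{S}$-modules. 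Composing with the quasi-isomorphism from the first step yields $\FTGK(H_\ast(\overline{\op{M}})) \simeq \mathfrak{p}H_\ast(\op{M})$, as desired.

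The main obstacle is the modular operadic formality invoked in the second step. Classical Deligne-Griffiths-Morgan-Sullivan formality handles each $\overline{\op{M}}_{g,n}$ individually as a compact K\"ahler manifold, but the compatibility of the required formality quasi-isomorphisms with the gluing operations that assemble these spaces into a modular operad is subtle and is precisely what \cite{AWZ} establishes, typically via purity of Hodge structures or Guill\'en-Navarro descent adapted to the modular operad setting. The first step, by contrast, is essentially a combinatorial repackaging of the stratification once Deligne's mixed Hodge theory for open varieties is in hand.
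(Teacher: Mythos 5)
Your proposal follows essentially the same route as the paper, which derives the statement exactly by combining \cite[Proposition 6.11]{GeK2} (identifying the Feynman transform of chains on $\overline{\op{M}}$ with shifted chains on the open moduli spaces) with modular operadic formality of $H_\ast(\overline{\op{M}})$ and homotopy invariance of the Feynman transform. The extra detail you supply on the stratification, the weight spectral sequence, and why formality must be compatible with the gluing maps is consistent with the cited sources and fills in what the paper leaves to the references.
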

 
If we regard $\mathfrak{p}H_\ast(\op{M})$ merely as a graded $\mathbb{S}$-module, we may ask the question: does there exist a $\mathfrak{K}$-modular operad structure on $\mathfrak{p}H_\ast(\op{M})$ such that Theorem $\ref{thm1}$ may be upgraded to a quasi-isomorphism of modular operads?  The answer to this question is no; it follows from an analysis of the weight filtration on $H^\ast(\op{M}_{1,n})$ \cite{PetG1}; see \cite[Proposition 1.11]{AP}.

\begin{theorem}\label{thm2}\cite{AP} There does not exist a $\mathfrak{K}$-modular operad structure on $\mathfrak{p}H_\ast(\op{M})$ for which $\FTGK(H_\ast(\overline{\op{M}}))$ and $\mathfrak{p}H_\ast(\op{M})$ are quasi-isomorphic as $\mathfrak{K}$-modular operads.  
\end{theorem}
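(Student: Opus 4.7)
My plan is to argue by contradiction, using the mixed Hodge / weight filtration on the cohomology of $\op{M}_{1,n}$ as the obstruction, following the strategy of Andrade--Petersen as indicated by the reference \cite{AP}. I would assume there is a $\mathfrak{K}$-modular operad structure $\mu$ on $\mathfrak{p}H_\ast(\op{M})$ together with a quasi-isomorphism $\phi$ of $\mathfrak{K}$-modular operads between $\FTGK(H_\ast(\overline{\op{M}}))$ and $\mathfrak{p}H_\ast(\op{M})$, and extract a numerical contradiction in genus one.

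First I would record the weight behavior on both sides of $\phi$. By Poincaré duality, $\mathfrak{p}H_\ast(\op{M}_{g,n})$ is (up to shift) identified with $H^\ast_c(\op{M}_{g,n})$, which carries a mixed Hodge structure. On the other hand $H_\ast(\overline{\op{M}})$ is pure (weight equals degree), so the Feynman transform $\FTGK(H_\ast(\overline{\op{M}}))$ inherits a natural weight grading whose associated graded is preserved by the edge-contraction differential. A quasi-isomorphism $\phi$ of dg $\mathfrak{K}$-modular operads must then induce an isomorphism on associated weight-graded pieces, and in particular the compositions of $\mu$ are constrained to be homogeneous of a prescribed weight shift along each gluing, both for the binary composition and for the self-contraction that raises the genus.

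Next I would invoke the calculation of $H^\ast(\op{M}_{1,n})$ via modular forms from \cite{PetG1}, which describes the Eisenstein and cuspidal pieces of the weight filtration in genus one. Using this, I would locate an explicit class in $\mathfrak{p}H_\ast(\op{M}_{1,n})$, for some small $n$, whose weight does not match the weight of any element producible by iterated application of the hypothetical $\mu$ to weight-homogeneous classes of lower arity or genus. The existence of such an obstructing class, dictated by Petersen's decomposition, contradicts the fact that $\phi$ together with $\mu$ would have to generate this bidegree from the Feynman transform side.

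The main obstacle, and the place where care is needed, is the bookkeeping of weight shifts across the distinct types of gluings in a $\mathfrak{K}$-modular operad (loop contractions versus bridges) together with the $\mathfrak{K}$-twist, and the identification of the precise genus-one bidegree at which the obstructing class first appears. Once this numerical accounting is in place, the contradiction is essentially forced by the mismatch between the pure Hodge structure on $H_\ast(\overline{\op{M}})$, inherited by the Feynman transform, and the genuinely non-pure, modular-form-driven classes on $\op{M}_{1,n}$ computed in \cite{PetG1}, giving the result as stated in \cite[Proposition 1.11]{AP}.
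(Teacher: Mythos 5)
Your overall strategy --- purity of $H_\ast(\overline{\op{M}})$, the induced weight splitting of the Feynman transform, and Petersen's genus-one computations --- is exactly the strategy of \cite{AP} and of the proof given in this paper (Lemma \ref{hlem} and Corollary \ref{notformal}), so you have identified the right ingredients. But as written there are two genuine gaps. First, the assertion that a quasi-isomorphism ``must induce an isomorphism on associated weight-graded pieces,'' so that ``the compositions of $\mu$ are constrained to be homogeneous of a prescribed weight shift,'' does not typecheck: the hypothetical target $(\mathfrak{p}H_\ast(\op{M}),\mu)$ carries no differential and no assumed compatibility between $\mu$ and the geometric weight filtration --- such compatibility is essentially what you are trying to rule out. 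What you actually get is that the internal-degree splitting of Equation \ref{split} transports along the induced isomorphism on homology to \emph{some} bigrading of $\mathfrak{p}H_\ast(\op{M})$ respected by $\mu$; identifying that transported grading with the geometric weight grading (or, better, avoiding the identification by arguing entirely on the Feynman-transform side, as the paper does) requires an argument you have not supplied. You also do not address the direction of the quasi-isomorphism (a priori only a zig-zag is given); the paper reduces to a single map $\DM\to\mathfrak{p}H_\ast(\op{M})$ via cofibrancy of the Feynman transform.

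Second, and more seriously, the entire mechanism of the contradiction is deferred to unperformed ``numerical accounting,'' which is where the whole proof lives. The concrete content is: the first obstruction occurs at $(g,n)=(1,4)$ (everything below is formal, Corollary \ref{13formal}), where $H_{-5}(\DM(1,4))^{-1,-4}\cong \Sigma^{-8}\mathfrak{gr}^W_4H^3(\op{M}_{1,4})\neq 0$ is represented only on graphs with at least one edge --- a weight-$4$ corolla sits in total degree $-4$, not $-5$, by Lemma \ref{deglem}, and $H_5(\overline{\op{M}}_{1,4})=0$. Since a strict operad map out of $\DM$ is determined by its values on corollas, the image of this homology class is forced to be a sum of operadic composites $\mu_\gamma$ applied to images of corollas of type $<(1,4)$ with total internal degree $4$, and one must then show, using $\mathfrak{gr}^W_4H^4(\op{M}_{1,4})=0$ from \cite{PetG1} together with the concentration statement of Lemma \ref{13lem}, that no such composite can hit the class. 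Your phrase ``contradicts the fact that $\phi$ together with $\mu$ would have to generate this bidegree from the Feynman transform side'' gestures at this but never explains why the class must be so generated, nor which class and which $(g,n)$ are at issue; without that, the proposal restates the strategy announced in the introduction rather than proving the theorem.
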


From the perspective of operadic algebra the notion of a modular operad is homotopically inflexible.  In \cite{WardMP} a homotopy invariant generalization of modular operads, called weak modular operads, were introduced.  Roughly speaking, weak modular operads are to modular opeards as $A_\infty$-algebras are to associative algebras.  Combining the suitable adaptation of homotopy transfer theory in loc.cit.\ with Theorem $\ref{thm1}$ rectifies the obstructions presented by Theorem $\ref{thm2}$ to prove:

\begin{theorem}\label{thm3}
	There exists a weak $\mathfrak{K}$-modular operad structure on the stable $\mathbb{S}$-module $\mathfrak{p}H_\ast(\op{M})$ for which there exists an $\infty$-quasi-isomorphism of weak $\mathfrak{K}$-modular operads
	\begin{equation*}
	\DM\stackrel{\sim}\rightsquigarrow \mathfrak{p}H_\ast(\op{M}).
	\end{equation*}
\end{theorem}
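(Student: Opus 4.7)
The plan is to deduce Theorem \ref{thm3} as a direct application of the homotopy transfer machinery for weak $\mathfrak{K}$-modular operads developed in \cite{WardMP}, fed with the quasi-isomorphism of differential graded stable $\mathbb{S}$-modules provided by Theorem \ref{thm1}.

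The first step is to promote that quasi-isomorphism to a contraction of dg stable $\mathbb{S}$-modules, that is, maps $p$, $i$ and a chain homotopy $h$ satisfying $pi=\mathrm{id}$ and $ip-\mathrm{id}=dh+hd$, with $\mathfrak{p}H_\ast(\op{M})$ viewed as a complex with zero differential. Working over $\mathbb{Q}$ and with the finite groups $S_n$ acting on each arity slot, Maschke's theorem lets one split the differential equivariantly in each fixed $(g,n)$ bidegree, producing $i$ and $p$ compatible with the stable $\mathbb{S}$-module structure; a standard averaging argument then delivers an equivariant chain homotopy $h$. This set-up uses nothing beyond the characteristic-zero and stable-$\mathbb{S}$-module context, so the contraction exists in the requisite category.

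The second step is to run this contraction through the transfer theorem of \cite{WardMP}. A weak $\mathfrak{K}$-modular operad is, by design, an algebra over a graph-indexed resolution of the monad defining strict $\mathfrak{K}$-modular operads, and the transfer procedure is therefore formal in the same way as for $A_\infty$-algebras: the induced operations on $\mathfrak{p}H_\ast(\op{M})$ are labelled by stable graphs $\Gamma$, with leaves decorated by $i$, vertices by the strict modular operations of $\DM$, internal edges by $h$, and the root by $p$, weighted by the standard automorphism factors. The same graph calculus produces the components of the $\infty$-quasi-isomorphism $\DM\stackrel{\sim}\rightsquigarrow \mathfrak{p}H_\ast(\op{M})$ whose linear term is $p$; that this map is a quasi-isomorphism is automatic from the fact that $p$ already induces an isomorphism on homology.

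The substantive step, and the principal technical obstacle, is verifying that the conclusions of \cite{WardMP} truly apply here. One must check that the sums over stable graphs converge in each fixed $(g,n)$ bidegree, which is guaranteed by the finiteness of the set of stable graphs of prescribed genus and arity, and that the transfer respects the signs, automorphism weights, and $\mathfrak{K}$-twists that distinguish the modular setting from the ordinary operadic one. Once that compatibility is in hand Theorem \ref{thm3} follows immediately. Theorem \ref{thm2} then plays a complementary role: it ensures that the higher operations produced by the transfer cannot all vanish, and in fact some must be nontrivial already at low genus, motivating the explicit identification of the first such operation via Getzler's elliptic relation carried out in the remainder of the paper.
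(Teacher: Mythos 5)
Your proposal follows essentially the same route as the paper: Theorem \ref{thm3} is obtained there by feeding the quasi-isomorphism of Theorem \ref{thm1} into the homotopy transfer theory for weak ($\mathfrak{K}$-)modular operads of \cite{WardMP}, exactly as you describe, with the characteristic-zero/Maschke averaging supplying the needed equivariant contraction. Your tree-style description of the transferred operations (leaves/root) is a slight oversimplification of the nested-graph formulas in the modular setting, but the argument is the intended one.
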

This structure of a weak $\mathfrak{K}$-modular operad is manifest as a family of homology operations 
\begin{equation*}
H_n(\op{M})(\gamma)_{Aut(\gamma)}\to H_{n+2|\gamma|-1}(\op{M}_{g,n})
\end{equation*}
indexed by stable graphs $\gamma$ having $|\gamma|$ edges.  We call these operations (operadic) Massey products.  We call such a Massey product a ``higher operation'' if its corresponding graph has more than one edge.

Let us denote this weak $\mathfrak{K}$-modular operad by $\mathfrak{p}h(\op{M})$.  Such a structure is not unique, but any two such structures are related by an $\infty$-isomorphism.  The Feynman transform $\FTGK$ may be extended to weak modular operads; we denote this extension by $\FT$. We prove:

\begin{theorem}\label{thm4}  The modular operad	$\FT(\mathfrak{p}h(\op{M}))$ is the minimal model for $H_\ast(\overline{\op{M}})$.
\end{theorem}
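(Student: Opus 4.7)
My plan is to reduce the theorem to two subclaims---that $\FT(\mathfrak{p}h(\op{M}))$ is quasi-isomorphic to $H_\ast(\overline{\op{M}})$, and that its differential is minimal---and prove each by combining results already invoked in the paper with one combinatorial check. For the quasi-isomorphism, I would first apply $\FT$ to the $\infty$-quasi-isomorphism of Theorem \ref{thm3}. The crucial input from \cite{WardMP} is that $\FT$ carries $\infty$-quasi-isomorphisms of weak modular operads to honest quasi-isomorphisms of dg modular operads, yielding $\FT(\DM) \stackrel{\sim}\to \FT(\mathfrak{p}h(\op{M}))$. Since $\DM = \FTGK(H_\ast(\overline{\op{M}}))$ is a strict modular operad, $\FT$ restricted to it coincides with $\FTGK$, so the source is the double Feynman transform $\FTGK(\DM)$. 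By bar-cobar duality from \cite{GeK2}, this double transform is a cofibrant resolution of $H_\ast(\overline{\op{M}})$. Composing in the homotopy category---or equivalently, lifting via the cofibrancy of the quasi-free operad---produces a direct quasi-isomorphism $\FT(\mathfrak{p}h(\op{M})) \stackrel{\sim}\to H_\ast(\overline{\op{M}})$.

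For minimality, I would unpack the Feynman transform differential directly. By construction $\FT(\mathfrak{p}h(\op{M}))$ is a quasi-free modular operad whose generators are a shift/dual of the stable $\mathbb{S}$-module $\mathfrak{p}H_\ast(\op{M})$. Its differential decomposes as the dual of the internal differential on $\mathfrak{p}h(\op{M})$ plus the dual of its weak operadic structure. The first summand vanishes because $\mathfrak{p}H_\ast(\op{M})$ is supported on homology and carries no internal differential. The second consists of one contribution per stable graph $\gamma$ with $|\gamma| \geq 1$ edges, each sending a single-corolla generator into the $\gamma$-decorated summand of the free modular operad. Such summands lie in the decomposable ideal: if $\gamma$ has $\geq 2$ vertices the decoration is a non-trivial tree composition, while if $\gamma$ has a single vertex with self-loops it is obtained from iterated applications of the loop-contraction $\xi$ to a generator. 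Hence the differential lands in decomposables and $\FT(\mathfrak{p}h(\op{M}))$ is minimal.

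The main obstacle I anticipate is the minimality step. Although formal in outline, it requires correctly identifying the generators and the decomposable ideal in the free modular operad---in particular accounting for single-vertex self-loops via $\xi$---and verifying that every term of the dualized weak structure lies there. The quasi-isomorphism half of the proof is essentially a packaged consequence of \cite{WardMP} and \cite{GeK2}, so the novelty of the argument rests on being careful with the combinatorics of the weak Feynman transform differential.
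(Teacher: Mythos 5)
Your first half---the quasi-isomorphism $\FT(\mathfrak{p}h(\op{M}))\stackrel{\sim}\to H_\ast(\overline{\op{M}})$---is essentially the paper's (largely implicit) argument: apply $\FT$ to the $\infty$-quasi-isomorphism of Theorem \ref{thm3}, note $\FT(\DM)=\FTGK^2(H_\ast(\overline{\op{M}}))$, and compose with the counit quasi-isomorphism $\FTGK^2(A)\to A$. That part is fine.

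The minimality half, however, has a genuine gap: you are proving the wrong statement. The paper's notion of minimality, taken from \cite{GNPR} and stated explicitly just before its concluding corollary, is that the canonical tower $\phi^{r-1}_\ast\phi^\ast_{r-1}\op{P}\to\phi^{r}_\ast\phi^\ast_{r}\op{P}$ indexed by modular dimension $3g+n-3\leq r$ is a sequence of \emph{principal extensions}, where a principal extension is defined by a representability/universal property ($\op{P}\sqcup_\xi V$ represents pairs $(f,g)$ with $d_{\op{Q}}\circ g=f\circ\xi$). Showing that the Feynman transform differential lands in the ``decomposable'' part of the free modular operad is neither the definition in play nor obviously equivalent to it---indeed the whole reason \cite{GNPR} formulate minimality via principal extensions is that the naive decomposability criterion is delicate for modular operads, precisely because of the self-gluings you flag as your anticipated obstacle. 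The paper's proof does concrete work that your sketch omits: it identifies $(\phi^r_\ast\phi^\ast_r\FT(\op{A}))_{\leq r}\cong\FT(\op{A})_{\leq r}$, uses the fact that the weak Feynman transform differential strictly decreases modular dimension to define the attaching map $\xi\colon\op{A}^\ast_r\to(\phi^{r-1}_\ast\phi^\ast_{r-1}\FT(\op{A}))_{\leq r-1}$, and then verifies both directions of the universal property (using freeness of the truncated operad to extend a pair $(f,g)$ to a dg map $h$). To repair your argument you would either have to carry out this verification or prove that decomposability of the differential implies the principal-extension property of the $D_\bullet$-tower; neither is done, and the second is not a result you can cite from the paper.
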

Our notion of minimal modular operad comes from \cite{GNPR}, namely we prove that the canonical tower of modular operads associated to the weak Feynman transform of a $\mathfrak{K}$-modular operad is a sequence of principal extensions.  The idea to construct a minimal model for $H_\ast(\overline{\op{M}})$ in such a way is suggested in \cite{WardMP} and in \cite[Remark 1.22]{DSVV}.

The first instance where such higher Massey products are unavoidable is the case $(g,n) = (1,4)$.  
This fact is reflected on the one hand by the existence of a relation between the codimension 2 strata in $H_4(\overline{\op{M}}_{1,4})$, after \cite{GetGW}, and on the other hand by impurity of the mixed Hodge structure on $H^\ast(\op{M}_{1,4})$, after \cite{PetG1}.  Comparing the natural bigrading on $\DM$ with the bigrading on $H_\ast(\op{M}_{1,4})$ induced by the weight filtration,
 the source and target of any such higher operation
 are each supported on a 1-dimensional space, and so such a Massey product is specified by a number.    If we restrict attention to $(g,n)\leq (1,4)$, ordered lexicographically, the following result gives the explicit form of all possible such higher operations:

\begin{theorem}\label{main}
Fix $\mathsf{e},\mathsf{w} \in \mathbb{Q}$ with $\mathsf{e}\neq 0$.  The nine homology operations
\begin{equation*}
\mu_\bullet \colon H_0(\op{M})(\delta_\bullet) \to \mathfrak{gr}_{4}^{W}(H_3(\op{M}_{1,4}))
\end{equation*}
corresponding to the nine stable graphs $\delta_\bullet$ of type (1,4) encoded by the table:
\begin{equation*}
	\begin{tabular}{c|c|c|c|c|c|c|c|c|c}
Massey Product	$\mu_{\bullet}$ &	$\mu_{2,2}$ & $\mu_{2,3}$ & $\mu_{3,4}$ & $\mu_{2,4}$ & $\mu_{0,4}$& $\mu_{0,3}$ & $\mu_{0,2}$ & 	$\mu_{\alpha}$ & $\mu_{\beta}$ \\ \hline
coefficient & $12\mathsf{e}$	 & $-4\mathsf{e}$ & $6\mathsf{e}$	 & $-2\mathsf{e}$	 & $\mathsf{e}+6\mathsf{w}$	 & $\mathsf{e}+3\mathsf{w}$ & $\mathsf{w}$	& $-3\mathsf{w}$  &	$-2\mathsf{e}-4\mathsf{w}$ \\
\end{tabular} 
\end{equation*}
 endow $\mathfrak{p}H_\ast(\op{M})$ with the structure of a weak $\mathfrak{K}$-modular operad for which there exists an explicitly presented $\infty$-quasi-isomorphism of (1,4)-truncated $\mathfrak{K}$-modular operads
	\begin{equation*}
\DM\stackrel{\sim}\rightsquigarrow \mathfrak{p}H_\ast(\op{M}).
\end{equation*}
\end{theorem}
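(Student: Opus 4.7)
The plan is to specialize the general homotopy transfer construction behind Theorem \ref{thm3} to the $(g,n)\le (1,4)$ truncation, and then to make the resulting transfer formulas explicit enough to read off the nine coefficients. Below arity $(1,4)$ every higher Massey product vanishes: for $g=0$ the relevant moduli spaces are formal and of pure weight with no codimension $\ge 2$ boundary relations, while for $(g,n)=(1,k)$ with $k\le 3$ the boundary complexes in the Feynman transform are too small to carry a nontrivial higher operation. Thus the entire content of the theorem is concentrated at the $(1,4)$ component.

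I would begin by computing $\DM(1,4)$ in the single homological bidegree which contributes to $\mathfrak{gr}_4^W H_3(\op{M}_{1,4})$. The Feynman transform formula presents this piece as $\mathrm{Aut}(\delta_\bullet)$-coinvariants of the decorations of the nine codimension $2$ stable graphs $\delta_\bullet$ by their fundamental classes; its internal differential is the edge-contraction sum landing on the codimension $1$ stratification, whose image is the span of the five classical modular operations. By \cite{GetGW} the kernel of the composition to $H_4(\overline{\op{M}}_{1,4})$ in this bidegree is spanned by Getzler's elliptic relation, and this pins down the one-dimensional target, up to an overall scale which we name $\mathsf{e}$.

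I would then choose an $S_4$-equivariant contracting homotopy implementing the formality quasi-isomorphism of Theorem \ref{thm1} in this bidegree, and apply the explicit transfer formula of \cite{WardMP}: for each graph $\delta_\bullet$, the Massey product $\mu_\bullet$ is the pairing of the decorated generator of $H_0(\op{M})(\delta_\bullet)$ against the chosen lift of Getzler's relation through this homotopy. Reading off the resulting rational number yields the corresponding table entry. A two-parameter family $(\mathsf{e},\mathsf{w})$ remains: $\mathsf{e}$ rescales the chosen generator of the one-dimensional target, and $\mathsf{w}$ records the choice of splitting of the weight filtration on $H^\ast(\op{M}_{1,4})$ in the sense of \cite{PetG1}. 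Both freedoms reflect the non-uniqueness of the weak modular operad structure up to $\infty$-isomorphism noted after Theorem \ref{thm3}.

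The main obstacle is bookkeeping: each $\delta_\bullet$ has a nontrivial automorphism group and a prescribed $S_4$-isotypical character, and the coefficients in Getzler's relation relative to the boundary basis of $H_4(\overline{\op{M}}_{1,4})$ must be tracked consistently with the sign and coinvariant conventions of the Feynman transform. The outcome must satisfy two independent consistency checks: the tuple of nine numbers must define a cocycle in $\FT(\mathfrak{p}h(\op{M}))$, which produces a single linear relation pinning down the $(\mathsf{e},\mathsf{w})$ family, and the induced map on homology must be an isomorphism, which forces $\mathsf{e}\ne 0$. Once these verifications are in place, the same transfer formulas yield the explicit $\infty$-quasi-isomorphism asserted by the theorem.
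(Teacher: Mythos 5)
Your route --- running the homotopy transfer of \cite{WardMP} against an explicit $S_4$-equivariant contracting homotopy --- is genuinely different from the paper's, which never chooses a homotopy: it posits the nine coefficients outright (the weak modular operad axioms are vacuous in this truncation because the internal differential vanishes and nothing lies above $(1,4)$), explicitly writes down the one-edge components $f_\gamma$ of the $\infty$-morphism, and verifies the differential conditions by hand against matrix presentations of the $S_4$-coinvariant weight-$4$ subcomplex. Your approach is viable in principle, but as written it has gaps. First, the vanishing of higher operations below $(1,4)$ is not because the complexes are ``too small'': $\DM(1,3)$ is sizeable, and formality of the $(1,3)$-truncation rests on the homology computations of Lemma \ref{13lem} (Harer's vanishing of $H_1$ and $H_2$ of $\op{M}_{1,n}$, Petersen's purity results, and \cite{CGP} for the top-weight row), which force all homology of $\DM(g,n)$ for $(g,n)\le(1,3)$ to be represented by graphs labeled by dual fundamental classes. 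Input of the same kind ($\mathfrak{gr}_4^WH^4(\op{M}_{1,4})=0$) is what pins the $(1,4)$ obstruction to bidegree $(-1,-4)$ and makes the target one-dimensional; your plan does not identify where this is used.

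Second, and more seriously, the transfer produces one weak structure per choice of homotopy, whereas the theorem asserts the conclusion for \emph{every} $(\mathsf{e},\mathsf{w})$ with $\mathsf{e}\neq 0$. The necessary condition you describe as ``a single linear relation'' is actually that the nine-tuple lie in the $2$-dimensional kernel of $\mathbb{Q}^9\to H_4(\overline{\op{M}}_{1,4})^{S_4}$ (seven conditions), spanned by Getzler's elliptic relation (the $\mathsf{e}$-direction) and the WDVV-type relation $\delta_{0,2}+3\delta_{0,3}+6\delta_{0,4}=3\delta_\alpha+4\delta_\beta$ (the $\mathsf{w}$-direction); and this condition is necessary but not sufficient --- one must still produce one-edge components of the $\infty$-morphism satisfying the two-edge constraint $\mu_\gamma\circ\tensor f_\ast = f_{\gamma/\text{I}}\circ\nu_\text{I}+f_{\gamma/\text{II}}\circ\nu_\text{II}$, which is where the real computation lives. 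To cover the whole $\mathsf{w}$-line you would need either to show explicitly how the transferred coefficients vary with the choice of homotopy, or to construct $\infty$-isomorphisms between different parameter values; the paper does the latter separately (Lemma \ref{unique}) using Getzler's gravity relations in $H_1(\op{M}_{0,6})$, a substantive ingredient your plan omits.
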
  Here, our notation for graphs follows \cite{GetGW}; see Figure $\ref{fig:14graphs}$ below.  Our notion of truncated modular operads is a straight-forward generalization of the notion of the same name given in \cite{GNPR}, see Section $\ref{sec:truncated}$.

We emphasize that as part of the proof of Theorem $\ref{main}$ the $\infty$-quasi-isomorphism in question is explicitly constructed.  An $\infty$-isomorphism relating different choices of $(\mathsf{e},\mathsf{w})$ is also explicitly constructed using the gravity relations of \cite{Getgrav}; see Lemma $\ref{unique}$.  Our analysis (Section $\ref{dmsec}$) also gives representing cycles for each homology class in $\op{M}_{1,4}$.  Some of this combinatorial analysis can be generalized to arbitrary $(g,n)$, for example to glean information about representing (co)cycles for
$\mathfrak{gr}^W_{6g+2n-8} H^{3g+n-4}(\op{M}_{g,n})$, which may be of independent interest.  See e.g.\ Lemma $\ref{row2lem}$.

The form of these higher operations can be given the following {\it a posteriori} explanation.  The minimal model $\FT(h_\ast(\op{M}))$ contains a one-dimensional subspace corresponding to the corolla labeled by $\mathfrak{gr}_4^WH^3(\op{M}_{1,4})$.  The weak Feynman transform differential composed with the weak equivalence $\FT(h_\ast(\op{M}))\to H_\ast(\overline{\op{M}}_{1,4})$ must vanish for this map to be dg.  Here, this weak equivalence is induced by the isomorphisms $H_0(\op{M}_{g,n})^\ast\cong H_{6g+2n-6}(\overline{\op{M}}_{g,n})$, and so the sum of the linear duals of Massey products landing in internal degree $0$ must correspond to a relation among boundary cycles in $H_{4}(\overline{\op{M}}_{1,4})$.  These relations were explicitly computed by Getzler \cite{GetGW}, and the sum of the linear duals of the Massey products given in Theorem $\ref{main}$ corresponds to $\mathsf{e}$ copies of the Getzler's elliptic relation along with $\mathsf{w}$ copies of a manifestation of the WDVV relations.

\tableofcontents

\section{Weak modular operads, the Feynman transform and minimality.}
In this section we sketch details pertaining to graphs, modular operads and the Feynman transform.  We refer to \cite{GeK2} for full details.  We also discuss weak modular operads and their $\infty$-morphisms; see \cite{WardMP} for more details.  Throughout we use homological degree conventions, denote the symmetric group by $S_n$ and denote the (graded) linear dual of a (graded) vector space $V$ by $V^\ast$.  We denote shift operators for graded vectors spaces by $\Sigma^r$, with $(\Sigma^rV)_{r+s} = V_s$.

\subsection{Modular graphs}
A graph $\gamma = (H,V,a,i)$ is a set of half edges $H$ and a non-empty set of vertices $V$, equipped with a function $a\colon H\to V$ assigning a vertex to each half edge and an involution $i\colon H\to H$ on the set of half edges.  Orbits of the involution of size two are called edges; orbits of size one are called legs.  A graph with no edges is called a corolla.  The valence of a vertex $v$ is $|v|:= |a^{-1}(v)|$.

A graph may be represented as a 1-dimensional CW complex.  A graph is said to be connected if this CW complex is connected.  The genus of a graph is the first betti number of this CW complex and is denoted $\beta_1(\gamma)$.  A genus labeling of a graph is a function $g\colon V\to \mathbb{Z}_{\geq 0}$.    The total genus of a genus labeled graph is defined to be $g(\gamma):= \beta_1(\gamma)+\sum_{v\in V}g(v)$.  A connected, genus labeled graph is stable if $|v|+2g(v)\geq 3$ for each vertex $v\in V$.  A stable graph is $\gamma$ is said to be of type $\ast_\gamma = (g,n)$ if it has total genus $g$ and $n$ legs.

A modular graph is a stable graph along with a bijection between the set of legs and some $\{1\cdc n\}$, called the leg labeling.  An isomorphism of modular graphs is a pair of maps of vertices and half edges which respect $a$ and $i$, as well as the genus labeling and leg labeling. Since we will typically be interested in modular graphs only up to such label preserving isomorphisms, let's define $\Gamma(g,n)^{S_n}$ (resp.\ $\Gamma(g,n)$) to be a skeletal set of stable graphs (resp.\ modular graphs) of type $(g,n)$.  We write $\Gamma(g,n)^{S_n}_r$ (resp.\ $\Gamma(g,n)_r$) for the subsets of such graphs having $r$ edges.

\subsection{Modular operads}  A stable $\mathbb{S}$-module is a collection of dg $S_n$-modules $A(g,n)$ for each pair of non-negative integers with $2g+n\geq 3$. Given a stable $\mathbb{S}$-module $A$ and a set $X$ with $n$ elements, we define $A(g,X)$ as the left Kan extension of $A(g,-)$ along the inclusion functor from the category with one object $\{1\cdc n\}$ and morphisms $S_n$ to the category of all finite sets and bijections.  Given a stable graph $\gamma$, we then define $A(v) = A(g(v),a^{-1}(v))$ for each $v\in V(\gamma)$.  We then define $A(\gamma) = \tensor_{v\in V(\gamma)}A(v)$.

A modular operad is a stable $\mathbb{S}$-module along with associative composition maps $\mu_\gamma\colon  A(\gamma) \to A(g,n)$ which are $S_n$-equivariant and which are coinvariant with respect to isomorphisms of graphs.  In particular it is sufficient to specify an operation $ A(\gamma)_{Aut(\gamma)} \to A(g,n)$ for each $\gamma \in \Gamma(g,n)$. 
We call this map contraction of the graph $\gamma$.

The associativity condition governs compositions of contractions.  Given a graph $\gamma$ and a connected subgraph $\gamma^\prime$, one may collapse the edges of $\gamma^\prime$ to form a new stable graph $\gamma/\gamma^\prime$.  Upon passing to coinvariants by automorphisms, it is possible to compose $\mu_{\gamma/\gamma^\prime}$ and $\mu_{\gamma^\prime}$, and the associativity axiom for modular operads states that $\mu_\gamma =\mu_{\gamma/\gamma^\prime} \circ \mu_{\gamma^\prime}$.  This associativity condition may be understood informally by saying that contracting a graph all at once, or one edge at time, or via any ordered partition of the set of edges results in the same map.  In particular, in a modular operad, it is enough to specify the one edged compositions.

Let us also recall the analogous notion of a $\mathfrak{K}$-modular operad.  For a stable graph $\gamma$ we define $\mathfrak{K}(\gamma)$ to be the top exterior power of the set of edges of $\gamma$, shifted to be concentrated in degree -$|$edges of $\gamma|$.  A $\mathfrak{K}$-modular operad is a stable $\mathbb{S}$-module along with associative contraction maps $A(\gamma) \tensor_{Aut(\gamma)} \mathfrak{K}(\gamma) \to A(g,n)$ for each $\gamma \in \Gamma(g,n)$.

\subsection{Weak modular operads}

A nest on a modular graph is a connected subgraph with no legs. 
A nesting on a modular graph is a possibly empty collection of nests such that each pair is either disjoint or nested (one is contained in the other).  
To every nest $N$ on a graph $\gamma$ we associate the stable graph $\hat{N}$ by adjoining to $N$ all adjacent half-edges in $\gamma$ as the legs of $\hat{N}$.

A weak modular operad is a stable $\mathbb{S}$-module along with an operation for each nested modular graph.  These operations are in turn specified by $S_n$-equivariant {\it generating} operations 
\begin{equation} \label{genops}
\mu_\gamma\colon A(\gamma)\tensor_{Aut(\gamma)}\Sigma^{-1}\mathfrak{K}(\gamma)^\ast \to A(g,n).
\end{equation} 
  Generating operations may be composed via graph insertion and the resulting operation is indexed by a nested modular graph.  These compositions are subject to a differential constraint $d(\mu_\gamma) = \sum_N \mu_{\gamma/N} \circ_N \mu_{\hat{N}}$. 
   We refer to \cite[Proposition 3.21]{WardMP} for more detail, and discussion of sign conventions for composing generating operations. There is a parallel notion of weak $\mathfrak{K}$-modular operads in which the generating operations are of the form
   \begin{equation}\label{weakk}
       \mu_\gamma\colon \Sigma^{-1} A(\gamma)_{Aut(\gamma)} \to A(g,n).  
   \end{equation}

Observe that a modular (resp.\ $\mathfrak{K}$-modular) operad is a weak modular (resp.\ weak $\mathfrak{K}$-modular) operad for which generating operations corresponding to graphs with more than one edge vanish.

\subsection{The Feynman transform.}
Following \cite{GeK2}, we momentarily restrict attention to modular operads which are finite dimensional in each graded component (see Remark $\ref{coFTrmk}$). The weak Feynman transform is a pair of functors
$\FT^+ \colon \left\{\text{weak modular operads} \right\} \to \left\{\mathfrak{K}\text{-modular operads} \right\}$
and
$\FT^- \colon \left\{\text{weak }\mathfrak{K}\text{-modular operads} \right\} \to \left\{\text{modular operads} \right\} $
defined as follows.  Given a weak modular operad $A$, define 
\begin{equation}\label{FT}
\FT^+(A)(g,n) = 
\ds\bigoplus_{\gamma \in \Gamma(g,n)}
 (\mathfrak{K}(\gamma)\tensor A^\ast(\gamma))_{Aut(\gamma)}
\end{equation}
along with the differential given by the sum of the linear dual of the generating operations in Equation $\ref{genops}$.  This dg stable $\mathbb{S}$-module carries the structure of a $\mathfrak{K}$-modular operad by graph insertion at vertices.

The functor $\FT^-$ is defined analogously, except we exclude the tensor factor $\mathfrak{K}(\gamma)$ in Equation $\ref{FT}$.  We refer to \cite[Section 4]{WardMP} for more details.  In what follows we write $\FT$ in place of $\FT^+$ and $\FT^-$, since the domain will be clear from the context.  When restricted to modular (resp.\ $\mathfrak{K}$-modular) operads, these functors recover the Feynman transform as defined in \cite{GeK2}.  We often use the notation $\FTGK$ in place of $\FT$ when the input is strict.

We will use the following properties of the (weak) Feynman transform.  The functors $\FT$ take $\infty$-morphisms to strict morphisms.  They take $\infty$-quasi-isomorphisms to quasi-isomorphisms. Finally, if $A$ is a modular operad then there exists a quasi-isomorphism $\FT^2(A)=\FTGK^2(A) \stackrel{\sim}\to A$.

\begin{remark}\label{coFTrmk}
Modular operads may be encoded as algebras over a particular (groupoid colored) operad, see \cite{WardMP}.  From this perspective, the Feynman transform is naturally isomorphic to the linear dual of the operadic bar construction for algebras over this operad (see Equation $\ref{naturalisos}$).  
   When performing combinatorial analysis of the differentials below, it will be preferable to have both of these dual constructions on hand.
\end{remark}

\subsection{$\infty$-morphisms of weak modular operads}
In this section we give an axiomatic description of the notion of an $\infty$-morphism between weak $\mathfrak{K}$-modular operads.  For this discussion we fix two such weak $\mathfrak{K}$-modular operads $\op{Q}$ and $\op{P}$; we will subsequently be interested in the case $\op{Q}=\DM$ and $\op{P}= \mathfrak{p}H_\ast(\op{M})$.  There is also a parallel notion of $\infty$-morphism between weak modular operads \cite{WardMP}, but it won't be needed in this paper.

An $\infty$-morphism of $\mathfrak{K}$-modular operads $f\colon \op{Q}\rightsquigarrow \op{P}$ specifies an $S_n$-equivariant map
\begin{equation}\label{im}
f_\gamma \colon \op{Q}(\gamma)_{Aut(\gamma)} \to \op{P}(g,n)
\end{equation}
of degree $0$ for each $\gamma \in  \Gamma(g,n)$.
These morphisms are subject to the following differential constraint.  Denote by $\nu_\gamma\colon \op{Q}(\gamma)_{Aut(\gamma)}\to \op{Q}(\ast_\gamma) $ and $\mu_\gamma\colon \op{P}(\gamma)_{Aut(\gamma)}\to \op{P}(\ast_\gamma)$ the generating operations for the weak $\mathfrak{K}$-modular operadic  on $\op{Q}$ and $\op{P}$ (Equation $\ref{weakk}$).
Recall these maps have degree $-1$.  A family of maps as in Equation $\ref{im}$ is an $\infty$-morphism if and only if it satisfies:
\begin{equation}\label{dcond0}
\sum_{N \text{ on } \gamma} (f_{\gamma/N}\circ_N \nu_N ) = \ds\sum_{V(\gamma) = \sqcup_i V(N_i)} (\mu_{\gamma/\cup N_i} \circ \tensor_i f_{N_i}).
\end{equation}

The first sum is taken over all nests $N$ on $\gamma$, along with the ``empty nest'' with the convention $\nu_\emptyset=d_\op{Q}$.  
The second sum is taken over nestings $\{N_i\}$ of $\gamma$ such that each vertex of $\gamma$ is contained within exactly one nest.  For future reference we call such a collection of nests a depth 1 nesting.  Collapsing the edges in each nest in such a nesting gives a modular graph $\gamma/\cup N_i$.  Again this includes a term of the form $d_\op{P}\circ f_\gamma$ under the convention $\mu_{\gamma/\gamma} = d_\op{P}$.  We remark that the signs in this expression are particularly simple because the input was already $\mathfrak{K}$-twisted.  In the non-$\mathfrak{K}$-twisted case we would have Koszul signs coming from rearranging tensor factors of the form $\mathfrak{K}(\gamma)$.

Below, we will specialize to the case where $\op{P}$ has no internal differential and $\op{Q}$ is a (strict) $\mathfrak{K}$-modular operad.  In this case the differential condition can be written as:
\begin{equation}\label{inftydcond}
\partial (f_\gamma) = f_\gamma\circ d_{\op{Q}}  = -\sum_{|N|=1} (f_{\gamma/N}\circ_N \nu_N ) + \ds\sum_{\substack{N_i\neq \gamma \\ V(\gamma) = \sqcup_i V( N_i)}} (\mu_{\gamma/\cup N_i} \circ \tensor f_{N_i})
\end{equation}
where $|N|=1$ means nests with one edge.

\subsection{Truncated modular operads.}\label{sec:truncated}

  A non-empty set $S\subset \{(g,n) : 2g+n\geq 3 \}$   of stable pairs is called closed if for each modular graph $\gamma$ of a type $(g,n)\in S$, and each nest $N$ on $\gamma$, the type of the stable graph $\hat{N}$ is in $S$.  In particular this means all vertices of $\gamma$ are of a type in $S$.

Let $S$ be a closed set of stable pairs.  Define an $S$-truncated modular operad (or an $S$-modular operad for short) to be a collection of dg vector spaces $A(g,n)$ for each $(g,n) \in S$ along with associative, $S_n$-equivariant operations $A(\gamma)_{Aut(\gamma)} \to A(g,n)$, for every modular graph $\gamma$ of type $(g,n)\in S$.  In particular every modular operad may be viewed as an $S$-modular operad by simply forgetting operations which don't land in $S$. 
  We similarly define $\mathfrak{K}$-twisted $S$-modular operads to be an $S$-indexed collection with such operations $A(\gamma) \tensor_{Aut(\gamma)} \mathfrak{K}(\gamma) \to A(g,n)$ for each modular graph $\gamma$ of type $(g,n)\in S$.

\begin{lemma}\label{triple}  Let $S\subset T$ be closed sets of stable pairs.  The forgetful functor from $T$-modular operads to $S$-modular operads has both a left and right adjoint.  
\end{lemma}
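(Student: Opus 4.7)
The plan is to exploit the closure conditions to realize the forgetful functor as restriction along an inclusion of groupoid-colored operads $\mathcal{O}_S \hookrightarrow \mathcal{O}_T$, in the sense of Remark~\ref{coFTrmk}. Indeed, closure of $S$ ensures that every modular graph of type in $S$ has all of its vertex types and nest types in $S$, so no $\mathcal{O}_T$-operation with output color in $S$ involves any color in $T \setminus S$. In this situation both adjoints exist and can be written down explicitly.

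For the right adjoint $R$ I would use the zero extension: for an $S$-modular operad $A$, define
\begin{equation*}
R(A)(g,n) := \begin{cases} A(g,n) & (g,n) \in S, \\ 0 & (g,n) \in T \setminus S, \end{cases}
\end{equation*}
with contraction maps given by those of $A$ whenever the output color lies in $S$, and by the zero map otherwise. Associativity is inherited from $A$ on the $S$-part and is the trivial identity $0 = 0$ elsewhere. A $T$-morphism $B \to R(A)$ is automatically zero on $T \setminus S$ components (the only map to $0$), compatibility with contractions into $T \setminus S$ then holds automatically, and the surviving data is exactly an $S$-morphism $U(B) \to A$. This produces the natural bijection $\mathrm{Hom}_T(B, R(A)) \cong \mathrm{Hom}_S(U(B), A)$.

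For the left adjoint $L$ I would use the free extension. Set $L(A)(g,n) = A(g,n)$ for $(g,n) \in S$, and for $(g,n) \in T \setminus S$ set
\begin{equation*}
L(A)(g,n) := \Bigg( \bigoplus_{\gamma} A(\gamma)_{Aut(\gamma)} \Bigg) \bigg/ \sim,
\end{equation*}
where $\gamma$ runs over modular graphs of type $(g,n)$ with all vertex types in $S$, and $\sim$ imposes the associativity relations coming from contracting nests $N$ for which $\hat{N}$ has type in $S$ (using the $S$-structure of $A$). The $T$-modular operad structure on $L(A)$ is given by graph insertion at vertices, which is well-defined since closure of $T$ ensures all intermediate types lie in $T$. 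Any $S$-morphism $f\colon A \to U(B)$ extends uniquely to a $T$-morphism $L(A) \to B$ via $\mu_\gamma(a_1 \otimes \cdots \otimes a_k) \mapsto \mu_\gamma^B(f(a_1) \otimes \cdots \otimes f(a_k))$, yielding $\mathrm{Hom}_T(L(A), B) \cong \mathrm{Hom}_S(A, U(B))$.

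The main technical obstacle is well-definedness of $L$: one must check that the associativity relations identify exactly what is needed, neither collapsing information from $A$ nor leaving the $T$-modular associativity in $L(A)$ unsatisfied. Both statements follow from the observation that the imposed relations are precisely those dictated by compatibility with contractions in any test operad $B$, so $L(A)$ corepresents the functor $B \mapsto \mathrm{Hom}_S(A, U(B))$ by construction. The $S_n$-equivariance is inherited from that of $A$'s structure, and the verification is routine.
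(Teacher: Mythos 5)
Your construction is correct and produces exactly the two functors the paper has in mind: the right adjoint is extension by zero (the paper's $\iota_!$) and the left adjoint is the relative free/left Kan extension (the paper's $\iota_\ast$), with closedness of $S$ doing precisely the work you assign it, namely guaranteeing that every contraction with output type in $S$ involves only types in $S$. The paper's own proof is just a citation of \cite[Proposition 7.9]{W6} together with the remark that ``closed'' is a translation of that proposition's hypotheses, so your argument is essentially an unpacking of the same approach rather than a different route.
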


This follows from \cite[Proposition 7.9]{W6}.  Indeed the definition of closed is simply a translation of the requirements of that proposition.  
 For a given $S\subset T$ we denote this triple of adjoint functors $(\iota_\ast,\iota^\ast,\iota_!)$.  The functor $\iota^\ast$ forgets those operations not living in $S$, the functor $\iota_!$ is extension by $0$, and the functor $\iota_\ast$ is a left Kan extension.  In particular $\iota^\ast\circ\iota_!$ and $\iota^\ast\circ\iota_\ast$ are identity functors.

One choice of $T$ would be all stable pairs, in which case a $T$-modular operad is simply a modular operad.   We define the Feynman transform of an $S$-modular operad to be $\FT_S := \iota^\ast \circ \FT \circ \iota_!$.  Defined as such, it is straight forward to verify that the Feynman transform commutes with the forgetful functor.  
Thus information about the Feynman transform in biarity $(g,n)$ is completely contained in the restricted Feynman transform $\FT_S$ anytime $(g,n)\in S$.

\begin{example}  Here are several examples of closed sets of stable pairs.
	\begin{itemize}
		\item Ordering the set of stable pairs lexicographically, so that $(g,n) < (g,n+1) < (g+1, n)$, $L_{(h,m)} = \{(g,n)<(h,m)\}$ is closed.  A $L_{(1,0)}$-modular operad is a cyclic operad; a $L_{(2,0)}$-modular operad is a semi-classical modular operad.
		\item Fix an integer $a\geq 1$.  The set $T_a = \{(g,n) \  |  \ 2g+n -2 \leq a\}$ is closed.
		\item Fix a stable pair $(h,m)$.  The intersection of all closed sets containing $(h,m)$ is closed, call it $M_{(h,m)}$. Observe $M_{(h,m)} \subset T_{2h+m-2}$, but they needn't be equal. For example $(2,0) \not\in M_{(2,1)}$.
		\item Fix an integer $a\geq 0$.  The set $D_a = \{(g,n) \ | \ 3g+n - 3 \leq a\}$ is closed.
	\end{itemize}
\end{example}

This last family of examples was considered in \cite{GNPR} for which a tower of modular operads was constructed.  The following is a straight forward generalization of their construction.  Call a sequence of closed subsets $S_0\subset S_1\subset...$  exhaustive if each $(g,n)$ is contained in some $S_i$.  Given an exhaustive sequence $\{S_i\}$ of closed subsets, denote the triple of adjoint functors connecting $S_{i}$-modular and $S_{i+1}$-modular operads as $(\phi^i_\ast, \phi^\ast_i, \phi_!^i)$.  Fix a modular operad $\op{A}$.  Taking the adjoint form of $\phi_{i-1}^\ast$ applied to the unit $\op{A} \to \phi^\ast_i \phi^i_\ast(\op{A})$ we get a sequence of modular operads:
$$ 0 \to \dots \to \phi_\ast^{i-1} \phi_{i-1}^\ast\op{A} \to \phi_\ast^{i} \phi_{i}^\ast\op{A} \to \dots \to \op{A}.$$

This construction may be applied to each of the examples above.  In Section $\ref{minmod}$ we will be interested in the tower associated to $D_\bullet$ and its relation to minimal models for modular operads, after \cite{GNPR}.  In Section $\ref{dmsec}$ we will be interested in the tower associated to $L_\bullet$, with a particular focus at the stage $(g,n)=(1,4)$.  For this we define:

\begin{definition}\label{truncated} A $(g,n)$-truncated modular (resp.\ $\mathfrak{K}$-modular) operad is defined to be a $L_{(g,n+1)}$-modular (resp.\ $\mathfrak{K}$-modular) operad.
\end{definition}

In particular, the (1,4)-truncation of a modular operad regards the genus 0 portion along with biarities $(1,n)$ for $n\leq 4$.  From the perspective of constructing higher operations for larger and larger truncations, the towers associated to $T_\bullet$ and $M_\bullet$ listed above are potentially simpler.  In general one needn't construct higher operations landing in $(g,n+3)$ in order to understand operations landing in $(g+1,n)$.

\subsection{The weak Feynman transform is minimal.} \label{minmod}
Let $\op{P}$ be a modular operad and let $V$ be a stable $\mathbb{S}$-module with differential $d_V=0$.  Let $\xi \colon V \to \op{P}$ be an $\mathbb{S}$-module map of degree $-1$.  By definition \cite{GNPR} the principal extension of $\xi$, denoted $\op{P}\sqcup_\xi V$ is a modular operad representing the functor from modular operads to sets given by:
\begin{equation}
\op{Q}\mapsto
\left\{ (f,g) \in Hom_{\text{ModOp}}(\op{P},\op{Q}) \times Hom_{\text{SMod}}(V,\op{Q}) \ | \ d_\op{Q}\circ g=f\circ \xi \right\}.
\end{equation}
Note there is a natural map of modular operads $\iota\colon \op{P}\to \op{P}\sqcup_\xi V$. We say a map of modular operads $\phi\colon\op{P}_1\to\op{P}_2$ is a principal extension if
there exists a $\xi\colon V\to \op{P}_1$ for which there is an isomorphism $\psi\colon \op{P}_2\stackrel{\cong}\to \op{P}_1\sqcup_\xi V$ with $\iota = \psi\circ\phi$.

Consider the tower of modular operads
\begin{equation}\label{tower}
0 \to \dots \to \phi^{r-1}_\ast \phi_{r-1}^\ast\op{P} \to \phi^r_\ast \phi_r^\ast \op{P} \to \dots \to \op{P}
\end{equation}
which corresponds to the exhaustive sequence $D_r$.  So here $\phi^\ast_r$ restricts to modular dimension $3g-3 +n\leq r$ and $\phi_\ast^r$ is its left adjoint.  Notice that our notation for the left and right adjoints of the forgetful functor follows \cite{W6} and is opposite to \cite{GNPR}.

\begin{lemma}  Let $\op{A}$ be a $\mathfrak{K}$-twisted modular operad with internal differential $d_\op{A}=0$.  For each $r$, the natural map
		\begin{equation*}
	\phi^{r-1}_\ast \phi_{r-1}^\ast\FT(\op{A}) \to \phi^{r}_\ast \phi_{r}^\ast\FT(\op{A}) 
	\end{equation*}
	is a principal extension.  Explicitly  $\phi^{r}_\ast \phi_{r}^\ast\FT(\op{A}) \cong \phi^{r-1}_\ast \phi_{r-1}^\ast(\FT(\op{A}))\sqcup_{\xi} \op{A}^\ast_r$.
\end{lemma}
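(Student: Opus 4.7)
The plan is to exhibit both operads appearing in the statement as free graded $\mathfrak{K}$-modular operads on truncations of $\op{A}^\ast$, and then to read off the principal extension structure. The key combinatorial input is the identity
\begin{equation*}
3g+n-3 \;=\; \sum_{v\in V(\gamma)}\bigl(3g(v)+|v|-3\bigr) + |E(\gamma)|,
\end{equation*}
which holds for any stable graph $\gamma$ of type $(g,n)$ and forces every vertex of $\gamma$ to have modular dimension at most $3g+n-3$, with equality precisely when $\gamma$ is the corolla.

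First, I would argue that Equation $\ref{FT}$ exhibits $\FT(\op{A})$ as the free $\mathfrak{K}$-modular operad $\mathbb{M}_\mathfrak{K}(\op{A}^\ast)$ on the underlying stable $\mathbb{S}$-module $\op{A}^\ast$ at the level of graded objects: the sum over $\gamma\in \Gamma(g,n)$ is exactly the free decomposition, the modular operad structure is graph insertion, and the FT differential provides additional dg data. Combined with the dimension identity, this yields $\phi_r^\ast \FT(\op{A}) \cong \mathbb{M}_\mathfrak{K}^{D_r}(\op{A}^\ast|_{D_r})$ as $D_r$-truncated graded $\mathfrak{K}$-modular operads, since every graph contributing to $\FT(\op{A})$ in biarity $(g,n)\in D_r$ has all its vertex biarities in $D_r$ as well. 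Since left adjoints preserve free-construction colimits, applying $\phi^r_\ast$ yields
\begin{equation*}
\phi^{r}_\ast\phi_{r}^\ast\FT(\op{A}) \;\cong\; \mathbb{M}_\mathfrak{K}\bigl(\op{A}^\ast|_{D_{r}}\bigr)
\end{equation*}
as a graded $\mathfrak{K}$-modular operad, carrying the restricted FT differential. The analogous statement at level $r-1$ differs only by the summand $\op{A}^\ast_r := \op{A}^\ast|_{D_r\setminus D_{r-1}}$ of generators supported at biarities with $3g+n-3=r$.

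The map $\xi\colon \op{A}^\ast_r \to \phi^{r-1}_\ast\phi_{r-1}^\ast\FT(\op{A})$ is then defined as the restriction of the FT differential to the corolla summands of biarity with $3g+n-3=r$. By the dimension identity, $d_{\FT}$ applied to a corolla generator lands in one-edged graph summands whose vertices all lie in $D_{r-1}$, so $\xi$ indeed takes values in $\mathbb{M}_\mathfrak{K}(\op{A}^\ast|_{D_{r-1}})$. To conclude I would verify the universal property of the principal extension directly: a morphism of modular operads out of the free object $\mathbb{M}_\mathfrak{K}(\op{A}^\ast|_{D_r})$ is freely determined by its restriction to the generating stable $\mathbb{S}$-module, and this restriction splits as a pair of $\mathbb{S}$-module maps $f$ on $\op{A}^\ast|_{D_{r-1}}$ and $h$ on $\op{A}^\ast_r$; the dg condition, once extended to the free operad by Leibniz, reduces to the relation $d_\op{Q}\circ h = f\circ \xi$ on the new generators, precisely as required by the defining universal property of $\phi^{r-1}_\ast\phi_{r-1}^\ast\FT(\op{A})\sqcup_\xi \op{A}^\ast_r$.

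The main obstacle I anticipate is a clean treatment of the freeness of $\FT(\op{A})$ as a graded $\mathfrak{K}$-modular operad, together with the commutation of $\phi^r_\ast$ with the free-construction functor in the $\mathfrak{K}$-twisted, automorphism-coinvariant setting. These assertions are standard in spirit, but require bookkeeping of the $\mathfrak{K}$-twist, sign conventions, and invariants; once they are in place, both the principal-extension structure and the identification of $\xi$ with the FT differential on new generators are forced by universal properties.
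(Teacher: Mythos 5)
Your proposal is correct and follows essentially the same strategy as the paper's proof: both identify $\xi$ with the Feynman transform differential restricted to the generators of modular dimension $r$ (using that this differential lands on graphs all of whose vertices have strictly smaller modular dimension), and both verify the universal property by exploiting freeness of $\FT(\op{A})$ on the generating $\mathbb{S}$-module $\op{A}^\ast$. The only difference is presentational: you identify $\phi^{r}_\ast \phi_{r}^\ast\FT(\op{A})$ globally as a free modular operad on the truncated generators via commutation of left adjoints, whereas the paper works with the explicit description of the left Kan extension as a quotient of nested graphs together with the truncation isomorphism $(\phi^{r}_\ast \phi_{r}^\ast\FT(\op{A}))_{\leq r} \cong \FT(\op{A})_{\leq r}$.
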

\begin{proof}
	The natural map $\pi_r\colon  \phi^{r-1}_\ast \phi_{r-1}^\ast\FT(\op{A}) \to \phi^{r}_\ast \phi_{r}^\ast\FT(\op{A})$ is described as follows.	The source and target are both quotients of the free modular operad on the modular operad $\FT(\op{A})$.  A homogeneous element in the free modular operad on $\FT(\op{A})$ is equivalent to a depth 1 nested graph $(\gamma, N_i)$ whose vertices $v$ are labeled by vectors in $\op{A}^\ast(v)$; nests are viewed as labeled by elements of $\FT(\op{A})$.  The quotient in the source identifies the composition of nests along connected subgraphs of $\gamma$ having modular dimension less than or equal to $r-1$.  	The quotient in the target makes the same identifications, but up to modular dimension $r$.  The map $\pi_r$ is the quotient map.

When a graph $\gamma$ is of modular dimension less than $r$, all possible nestings are identified in $\phi^{r}_\ast \phi_{r}^\ast\FT(\op{A})$, and so there is a natural isomorphism $(\phi^{r}_\ast \phi_{r}^\ast\FT(\op{A}))_{\leq r} \cong \FT(\op{A})_{\leq r}$.  The weak Feynman transform differential decreases modular dimension.  Consider the composition of this natural isomorphism with the Feynman transform differential restricted to $\op{A}^\ast_r$:
\begin{equation*}
\op{A}^\ast_r \stackrel{d_{\FT}}\longrightarrow  \FT(\op{A})_{\leq r-1}\cong (\phi^{r-1}_\ast \phi_{r-1}^\ast\FT(\op{A}))_{\leq r-1}.
\end{equation*}
Call this composition $\xi$.  It is a degree $-1$ map of stable $\mathbb{S}$-modules.

We then check the universal property.  Given a dg map $h\colon \phi_\ast^r\phi_{r}^\ast\FT(\op{A})\to \op{Q}$, we precompose with $\pi_r$ and $\iota\colon \op{A}^\ast_r\hookrightarrow  \phi_\ast^r\phi_{r}^\ast\FT(\op{A})$ to get the two maps $f$ and $g$.  If we precompose $dh=hd$ with $\iota$ and use the fact that $\pi\circ\xi = d_{\FT}\circ i$, 
we find  $d_\op{Q}\circ g = f \circ \xi$.

	Conversely, suppose that we have such a pair $(f,g)$ with  $d_\op{Q}\circ g = f \circ \xi$.
	To define $h\colon \phi_\ast^r\phi_{r}^\ast\FT(\op{A})\to \op{Q}$ it suffices, by left adjointness of $\phi_\ast^r$, to define the map up to and including modular dimension $r$.  It thus suffices to define a map $h\colon \FT(\op{A})_{\leq r} \to \op{Q}$.  
	Since the truncated modular operad structure on $\FT(\op{A})_{\leq r}$ is free, this map is determined by where it sends $\op{A}^\ast_{\leq r}$.  We use $f$, precomposed with inclusion, to define $h$ on $\op{A}^\ast_{\leq r-1}$.  We use $g$ to define $h$ on $\op{A}^\ast_{r}$.  
	
	It remains to check that the induced map is dg, and it suffices to check this on the image of the inclusion of the generators $\op{A}^\ast_{\leq r}\hookrightarrow \FT(\op{A})_{\leq r}$   Taking the differential of a homogeneous element of modular dimension less than $r$, this will follow from the fact that $f$ is dg.  If we take the differential of a homogeneous element of modular dimension equal to $r$, this will follow by combining $\pi\circ\xi = d_{\FT}\circ i$ with the assumption that $d_\op{Q} \circ g = f\circ\xi$.
\end{proof}

We recall \cite{GNPR} that a modular operad is called minimal if the tower in Equation $\ref{tower}$ is a sequence of principal extensions.  Thus we have shown:
\begin{corollary}  The weak Feynman transform of a $\mathfrak{K}$-modular operad with vanishing internal differential is a minimal modular operad.
\end{corollary}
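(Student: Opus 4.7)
The plan is that this corollary is essentially immediate from the preceding lemma together with the definition of minimality. By definition, a modular operad $\op{P}$ is minimal precisely when its canonical tower
\[
0 \to \dots \to \phi^{r-1}_\ast \phi_{r-1}^\ast\op{P} \to \phi^r_\ast \phi_r^\ast \op{P} \to \dots \to \op{P}
\]
associated to the exhaustive sequence $D_\bullet$ (filtering by modular dimension $3g-3+n$) is a sequence of principal extensions. So the task reduces to verifying that each stage of this tower is a principal extension when $\op{P} = \FT(\op{A})$ for a $\mathfrak{K}$-modular operad $\op{A}$ with $d_\op{A}=0$.

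The preceding lemma does exactly this: for each $r$, it identifies the natural map $\phi^{r-1}_\ast \phi_{r-1}^\ast\FT(\op{A}) \to \phi^{r}_\ast \phi_{r}^\ast\FT(\op{A})$ with the principal extension along the attaching map
\[
\xi \colon \op{A}^\ast_r \to \phi^{r-1}_\ast\phi_{r-1}^\ast \FT(\op{A})
\]
obtained by composing the Feynman transform differential $d_{\FT}\colon \op{A}^\ast_r \to \FT(\op{A})_{\leq r-1}$ with the canonical isomorphism $\FT(\op{A})_{\leq r-1} \cong (\phi^{r-1}_\ast \phi_{r-1}^\ast\FT(\op{A}))_{\leq r-1}$. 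This latter isomorphism uses crucially the hypothesis that $\op{A}$ has vanishing internal differential, because it is that hypothesis which ensures $d_{\FT}$ strictly decreases modular dimension, so that no auxiliary identifications at level $r$ obstruct the factorization.

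So the write-up would be a brief two-sentence proof: cite the lemma to conclude each map in the tower (tower) for $\op{P} = \FT(\op{A})$ is a principal extension, then invoke the definition of minimal from \cite{GNPR}. There is no real obstacle here, as the combinatorial content and universal-property verification were done in the lemma; the corollary just repackages it. If anything, the only subtle point worth stating explicitly is the base case, namely that the bottom of the tower is the zero modular operad and the first map $0 \to \phi^{0}_\ast\phi_0^\ast \FT(\op{A})$ is trivially the principal extension along the zero map from $\op{A}^\ast_0$, which fits the same pattern once one notes $\op{A}_0$ is supported only on stable pairs with $3g-3+n=0$.
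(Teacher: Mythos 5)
Your proposal matches the paper exactly: the corollary is stated there with no separate proof, being an immediate consequence of the preceding lemma together with the definition of minimality from \cite{GNPR} as the tower in Equation (\ref{tower}) being a sequence of principal extensions. Your added remarks on where $d_{\op{A}}=0$ enters and on the base case of the tower are correct and consistent with the lemma's proof.
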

We remark that combining this result with Theorem $\ref{thm3}$ establishes Theorem $\ref{thm4}$.

\section{Analysis of $\DM$}\label{dmsec}

In this section we consider the Feynman transform of the standard modular operad structure on the stable $\mathbb{S}$-module $H_\ast(\overline{\op{M}})(g,n):= H_\ast(\overline{\op{M}}_{g,n})$ after \cite{GeK2}.

Using Theorem $\ref{thm1}$ we now fix an $\mathbb{S}$-module isomorphism $H_\ast(\DM)\cong\mathfrak{p}H_\ast(\op{M})$ chosen to satisfy two additional properties.  The first is that the corolla labeled by the dual fundamental class in $H_{6g+2n-6}(\overline{\op{M}}_{g,n})^\ast$ maps to the homology class of a point in $\Sigma^{6-6g-2n}H_0(\op{M}_{g,n})$.   
Second we may furthermore impose that our isomorphism is chosen so that the maps in genus $0$ combine to form a map of cyclic operads with respect to the standard operad structure on $\mathfrak{p}H_\ast(\op{M}_{0,n})$, seen as a suspension of the gravity operad \cite{Getgrav}.  The fact that such an isomorphism exists follows from Koszul duality of the hyper-commutative and gravity operads \cite{GetHyCom}.  We endow $\mathfrak{p}H_\ast(\op{M})$ with the structure of a $\mathfrak{K}$-modular operad induced from this chosen isomorphism.

There is a map of $\mathfrak{K}$-modular operads
$\DM \to H_\ast(\DM)$ induced by sending a corolla labeled by a dual fundamental class to the homology class it represents.  Define $f^0$ to be the composite
\begin{equation} \label{zeta}
\DM \to H_\ast(\DM)\stackrel{\cong}\to\mathfrak{p}H_\ast(\op{M}).
\end{equation}

\subsection{Bigrading of $\DM$}

The complex $\FTGK(H_\ast(\overline{\op{M}}))(g,n)$ is bigraded.   A homogeneous element of bidegree $(-r,-s)$ is given by a labeled graph with $r$ edges and internal degree $-s$.  Explictily $-s$ is the sum of the degrees of the vertex labels and so $r,s \geq 0$. The differential in the Feynman transform preserves internal degree, so we have a splitting of complexes:
\begin{equation}\label{split}
\FTGK(H_\ast(\overline{\op{M}}))(g,n) = \ds\bigoplus^{6g+2n-6}_{s=0}\FTGK(H_\ast(\overline{\op{M}}))(g,n)^{\bullet,-s}
\end{equation}

We remark that this splitting induces a secondary grading on the homology $H_\ast(\op{M}_{g,n})$ recovering the associated graded of the weight filtration.  Specifically, for a given $r$ and $s$ this complex computes the weight $s$ part of the compactly supported (co)homology in degree $r+s$ (see e.g. \cite{AWZ}), which coincides with the weight $6g+2n-6-s$ part of the ordinary cohomology in degree $6g+2n-6-r-s$ by Poincare duality. 

\begin{lemma}\label{deglem} Each complex $\FTGK(H_\ast(\overline{\op{M}}))(g,n)^{\bullet,-s}$ is supported in external degree $0 \leq r \leq \frac{1}{2}(6g+2n-6-s)$.
\end{lemma}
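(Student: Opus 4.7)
The plan is to exploit the fact that a bidegree $(-r,-s)$ summand of $\FTGK(H_\ast(\overline{\op{M}}))(g,n)$ is a direct sum indexed by stable graphs $\gamma$ with $r$ edges, where each vertex $v$ carries a label in $H_\ast(\overline{\op{M}}_{g(v),|v|})^\ast$ of internal degree $-k_v$ with $\sum_v k_v = s$. Since $\overline{\op{M}}_{g(v),|v|}$ is a compact complex orbifold of complex dimension $3g(v)+|v|-3$, its rational homology vanishes above degree $6g(v)+2|v|-6$. Hence the key constraint is
\begin{equation*}
s \;=\; \sum_{v\in V(\gamma)} k_v \;\leq\; \sum_{v\in V(\gamma)} \bigl(6g(v)+2|v|-6\bigr).
\end{equation*}

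The main step is to rewrite the right-hand side in terms of $g$, $n$ and $r$ alone, using standard counting for stable graphs. Writing $V = |V(\gamma)|$, the handshaking lemma (each edge contributes two half-edges, and there are $n$ legs) gives $\sum_v |v| = 2r + n$, while the genus formula for a connected graph reads $g = \beta_1(\gamma) + \sum_v g(v) = (r-V+1) + \sum_v g(v)$, i.e.\ $\sum_v g(v) = g - r + V - 1$. Substituting,
\begin{equation*}
\sum_v \bigl(6g(v)+2|v|-6\bigr) \;=\; 6(g-r+V-1) + 2(2r+n) - 6V \;=\; 6g + 2n - 6 - 2r.
\end{equation*}
Combining with the previous inequality yields $2r \leq 6g+2n-6-s$, which is the upper bound. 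The lower bound $r\geq 0$ is immediate from the definition of external degree.

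There is no serious obstacle here; the argument is a bookkeeping exercise combining vanishing of homology above the real dimension with Euler characteristic identities for stable graphs. The only minor subtlety is to ensure the edge-counting is applied to the underlying graph (not to the stable graphs $\hat{N}$ associated to nests), so I would state the counting identities at the level of the skeletal set $\Gamma(g,n)_r$ and then note that stability and connectivity of $\gamma$ are used only through the genus formula.
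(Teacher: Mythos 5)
Your proof is correct and follows essentially the same route as the paper: bound $s$ by the sum of top homological degrees $6g(v)+2|v|-6$ over the vertices, then eliminate the vertex data via the half-edge count $\sum_v|v|=2r+n$ and the genus/Euler-characteristic identity $g=\beta_1(\gamma)+\sum_v g(v)$ with $V-E=1-\beta_1(\gamma)$. No substantive differences.
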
 
\begin{proof}  Fix $s$ and a non-zero homogeneous vector in bidegree $(-r,-s)$.  Let $\gamma$ be the underlying graph with genus $\beta_1(\gamma)$, with $E=r$ edges and with $V$ vertices indexed by $i$.  By definition $g= \beta_1(\gamma) + \sum_i g_i$, where $(g_i,n_i)$ is the type of each vertex.  By dimension considerations 
	\begin{equation*}
	s \leq \sum_i (6g_i + 2n_i -6) = 6g - 6\beta_1(\gamma) -6V + \sum_{i}2n_i.	
	\end{equation*}
  Counting flags we have $2E + n = \sum n_i$, and thus 
$	s \leq   6g - 6\beta_1(\gamma) -6V + 4E + 2n.	$
Calculating Euler characteristic of $\gamma$ we have $V-E = 1 - \beta_1(\gamma)$, and thus
$s \leq   6g - 2E-6 + 2n$.
\end{proof}

When $s$ is even, we see this upper bound is achieved when all vertices are labeled by dual fundamental classes.

\begin{lemma}\label{row2lem} Let $g\geq 1$ and $(g,n)\neq (1,1)$.  A homogeneous element in $\FTGK(H_\ast(\overline{\op{M}}))(g,n)^{4-3g-n,-2}$ is not a $d_{\FTGK}$ boundary only if its underlying graph has no loops, no parallel edges and no vertices of genus $\geq 1$.
\end{lemma}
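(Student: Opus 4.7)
The plan is to prove the contrapositive: if $\gamma$ has a loop, parallel edges, or a vertex of genus $\geq 1$, then every homogeneous element supported on $\gamma$ in bidegree $(4-3g-n, -2)$ lies in the image of $d_{\FTGK}$. The first step is a structural refinement: since $r=3g+n-4$ saturates the bound of Lemma~\ref{deglem}, tracking the equalities in its proof forces each vertex to be labeled by its dual fundamental class, and the constraint $\sum_v(6g_v+2n_v-6)=2$ with nonnegative even summands then forces exactly one ``distinguished'' vertex of type $(0,4)$ or $(1,1)$, with every other vertex of type $(0,3)$. A second preliminary observation is that for any preimage built by contracting an edge of $\gamma$, the Feynman differential is concentrated at the unique modified vertex: $(0,3)$-vertices contribute nothing since no stable one-edge graph of type $(0,3)$ exists, and top-class labels on $(0,4)$- and $(1,1)$-vertices are sent to zero by each $\mu_\sigma^\ast$ for degree reasons, as the relevant source $H_\ast(\overline{\op{M}})(\sigma)$ is concentrated in degree~$0$.

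The parallel edges case follows by symmetry: swapping two parallel edges between vertices $u$ and $w$ is an automorphism of $\gamma$ acting as $-1$ on $\mathfrak{K}(\gamma)$ and trivially on the $S_n$-invariant dual fundamental class labels, so the element vanishes in the $\mathrm{Aut}(\gamma)$-coinvariants and is trivially a boundary. (A $(1,1)$-vertex, having a single half-edge, cannot be an endpoint of parallel edges, so one of $u,w$ is at worst the distinguished $(0,4)$-vertex, whose $S_4$-invariant top class is also fixed.)

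For a loop or for a $(1,1)$-vertex, I would exhibit an explicit preimage by edge contraction. Contracting a loop at a $(0,3)$-vertex produces a $(1,1)$-vertex $v'$ in $\gamma'$ which I label by $[\delta_{\text{irr}}]^\ast \in H_0(\overline{\op{M}}_{1,1})^\ast$; its unique one-edge expansion recovers the $(0,3)$-with-loop configuration labeled by the dual fundamental class, yielding $\gamma$ up to a nonzero gluing constant. Contracting a loop at the distinguished $(0,4)$-vertex produces a $(1,2)$-vertex $v'$ labeled by $[\delta_{\text{irr}}]^\ast \in H_2(\overline{\op{M}}_{1,2})^\ast$; the loop-expansion yields $\gamma$ while the reducible expansion $(0,3)\text{-}(1,1)$ vanishes by orthogonality of the boundary classes. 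For a distinguished $(1,1)$-vertex $v$, contracting its unique incident edge merges $v$ with its $(0,3)$-neighbor into a $(1,2)$-vertex $v'$ labeled by $[\delta_0]^\ast$; the $(0,3)\text{-}(1,1)$ expansion yields $\gamma$ while the irreducible expansion vanishes by the same orthogonality. The hypothesis $(g,n)\neq(1,1)$ is precisely what ensures the preimage graph $\gamma'$ remains in the complex.

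The principal obstacle will be careful half-edge bookkeeping when expanding a modified vertex, verifying that the symmetric partitions of its half-edges all assemble into the intended configuration rather than yielding parasitic graph terms, and that the automorphism scale factors (notably the elliptic $\mathbb{Z}/2$ of the $(1,1)$-self-gluing) remain nonzero so that the target class can be recovered up to scaling.
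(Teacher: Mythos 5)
Your proposal is correct and takes essentially the same route as the paper's proof: the identical structural refinement to a single distinguished $(0,4)$- or $(1,1)$-vertex with all other vertices of type $(0,3)$, the same sign argument on $\mathfrak{K}(\gamma)$ disposing of parallel edges, and the same preimages obtained by contracting the loop (resp.\ the edge at the genus-one vertex) and labeling the new vertex by a dual boundary class. The one assertion you should justify is the ``orthogonality of the boundary classes'' in $H_2(\overline{\op{M}}_{1,2})$: the dual functionals you invoke exist only because the irreducible and reducible boundary classes are linearly independent there, which the paper deduces from $H_2(\op{M}_{1,2})=0$ (equivalently, injectivity of $d_{\FTGK}$ on $H_2(\overline{\op{M}}_{1,2})^\ast$) together with the fact that $H_2(\overline{\op{M}}_{1,2})$ is two-dimensional.
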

\begin{proof}  First observe that the Feynman transform differential restricted to $H_2(\overline{\op{M}}_{1,2})^\ast$ is injective, since $H_2(\op{M}_{1,2})=0$ by \cite{Harer}.
	Thus, the $S_2$-representation $H_2(\overline{\op{M}}_{1,2})\cong V_2\oplus V_2$ has a basis given by the image of a loop contraction $H_2(\overline{\op{M}}_{0,4})\to H_2(\overline{\op{M}}_{1,2})$ and a non-loop contraction $H_2(\overline{\op{M}}_{1,1})\tensor H_0(\overline{\op{M}}_{0,3})\to H_2(\overline{\op{M}}_{1,2})$.  Let $u^\ast$ and $v^\ast$ denote the dual basis of $H_2(\overline{\op{M}}_{1,2})^\ast$.

	Consider a non-zero homogeneous vector in $\FTGK(H_\ast(\overline{\op{M}}))(g,n)^{4-3g-n,-2}$ supported on a graph $\gamma$.  By Lemma $\ref{deglem}$ and its proof we see that $\gamma$ must be labeled by dual fundamental classes.  It follows that $\gamma$ has exactly one vertex not of type (0,3), which must be of of type (0,4) or (1,1).  	If the graph $\gamma$ contains parallel edges (a circuit with 2 edges), the vector itself is seen to be zero after modding out by the automorphism of $\gamma$ which permutes these parallel edges.  We thus assume $\gamma$ has no parallel edges. 

	We now prove the Lemma by constructing preimages of $d_\FTGK$ in the remaining cases.  	
	If the distinguished vertex if of type (1,1), our assumption that 
	$(g,n)\neq (1,1)$ implies this vertex is adjacent to one edge.  Contract this edge to get a graph whose only vertex not of type $(0,3)$ is labeled by a non-zero scalar multiple of $v^\ast$, for which the image under $d_\FTGK$ recovers the original vector.  
	Now assume the distinguished vertex is of type $(0,4)$.
	If the graph $\gamma$ contains a loop (a circuit with one edge) we may contract it to a vertex which has either valence 1 or 2.  In each case we may label this vertex so that the unique term in the Feynman tranform differential is the vector we started with.  In the latter case this label will be a non-zero scalar multiple of $u^\ast$.
	
	To conclude the proof we have only to verify that the constructed preimages of $d_{\FTGK}$ do not vanish upon passage to coinvariants by graph automorphisms.  The fact that no parallel edges were created and that each non-distinguished vertex is of type $(0,3)$ ensures this.
\end{proof}

This lemma can be rephrased as saying that, for $(g,n)>(1,1)$,  a non-trivial cycle of minimal degree on the $s=2$ row of $\DM(g,n)$ must have a unique vertex of type $(0,4)$, all remaining vertices of type $(0,3)$ and only polygonal circuits.  These non-trivial cycles represent (co)homology classes in $\mathfrak{gr}^W_{6g+2n-8} H^{3g+n-4}(\op{M}_{g,n})$, and we will use this result to find representing cycles for   $\mathfrak{gr}^W_{6} H^{3}(\op{M}_{1,4})$ below (Lemma $\ref{hlem}$).

\subsection{Analysis of the double complex $\DM(g,n)$ for $(g,n) < (1,4)$}
\begin{lemma}\label{13lem}  Let $(g,n)<(1,4)$.  If $2r+s \neq 6g+2n-6$ then $H_\ast(\DM(g,n))^{-r,-s}=0$.
\end{lemma}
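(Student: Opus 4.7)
The plan is to reinterpret the statement as a purity claim for the mixed Hodge structure on $H^\ast(\op{M}_{g,n};\mathbb{Q})$, and then to verify that purity for each of the finitely many biarities with $(g,n)<(1,4)$.

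First I would unpack the bigrading using the remark following the splitting $(\ref{split})$. That remark identifies $H_\ast(\DM(g,n))^{-r,-s}$ with $\mathfrak{gr}^W_{6g+2n-6-s} H^{6g+2n-6-r-s}(\op{M}_{g,n};\mathbb{Q})$. Writing $j=6g+2n-6-r-s$ for the cohomological degree and $w=6g+2n-6-s$ for the weight, a direct substitution shows that the arithmetic condition $2r+s=6g+2n-6$ is equivalent to $w=2j$. Therefore Lemma $\ref{13lem}$ is equivalent to the assertion that, for $(g,n)<(1,4)$, the MHS on $H^\ast(\op{M}_{g,n};\mathbb{Q})$ is pure of Hodge--Tate type with $\mathfrak{gr}^W_w H^j = 0$ unless $w=2j$.

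Second, I would run through the finitely many biarities $(g,n)<(1,4)$, namely $(0,n)$ for $n\geq 3$ together with $(1,1)$, $(1,2)$, and $(1,3)$. For genus zero, $\op{M}_{0,n}$ is a smooth affine hyperplane arrangement complement; by the classical descriptions of Arnold and Keel, its rational cohomology is generated in degree one by residues of logarithmic forms along boundary divisors of $\overline{\op{M}}_{0,n}$, and is consequently pure of Hodge--Tate type with $H^j=\mathfrak{gr}^W_{2j}H^j$. The case $(1,1)$ is immediate, since $\op{M}_{1,1}$ is rationally the affine $j$-line and $H^\ast(\op{M}_{1,1};\mathbb{Q})$ is concentrated in degree zero. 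The remaining cases $(1,2)$ and $(1,3)$ follow from explicit cohomology computations of Getzler, and also fit inside Petersen's weight-filtration analysis of $H^\ast(\op{M}_{1,n})$ in \cite{PetG1} restricted to small $n$: each $H^j(\op{M}_{1,n};\mathbb{Q})$ for $n\leq 3$ is pure of weight $2j$.

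The main obstacle is this last step. The genus zero and $(1,1)$ cases are classical and can be treated uniformly, but verifying purity for $(1,2)$ and especially $(1,3)$ requires importing nontrivial cohomology computations; the combinatorial structure of the Feynman transform complexes $\DM(1,2)$ and $\DM(1,3)$ does not by itself force the needed cancellations, so the argument ultimately rests on external MHS inputs. An alternative tack, if one wanted a self-contained combinatorial argument, would be to filter each of these finitely many complexes by edge count and to analyse the associated spectral sequences by hand; this is feasible but less transparent than invoking purity, and I expect the author will prefer the MHS route.
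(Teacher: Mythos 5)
Your proposal is correct and matches the paper's strategy: the paper likewise handles the finitely many cases by translating the bigraded homology into weight-graded cohomology of the open moduli spaces and invoking external inputs — Getzler's genus-zero results, Harer's vanishing of $H_1(\op{M}_{1,n})$ and $H_2(\op{M}_{1,n})$, Petersen's weight analysis for $(1,3)$, and Chan--Galatius--Payne for the top-weight row. The only organizational difference is that for genus one the paper first computes the Euler characteristic of each fixed-$s$ row from the equivariant Serre/Hodge polynomials and then uses the vanishing results to force concentration on the diagonal, rather than asserting purity of each $H^j(\op{M}_{1,n})$ outright.
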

\begin{proof}   
For $g=0$ this follows from \cite{GetHyCom}.  For $g=1$ we remark that one can use the calculation of the $S_n$ equivariant Poincare/Hodge polynomials of $\overline{\op{M}}_{g,n}$ from \cite{GetHyCom} and \cite{GetSC} to easily count dimensions of each bigraded component and hence find the Euler characteristic of each row.  Then for (1,1) and (1,2), the result follows from the fact \cite{Harer} that $H_1(\op{M}_{1,n})= H_2(\op{M}_{1,n})=0$.  Finally for the case (1,3), we may use \cite[Theorem 1.2]{CGP} to determine the $s=0$ row, \cite[Theorem 1.1]{PetG1} for the $s=2$ row and \cite{Harer} for the $s=4$ row.  
\end{proof}

\begin{corollary}\label{13formal}  The (1,3)-truncation of $\DM$ is formal.
\end{corollary}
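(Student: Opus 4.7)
The plan is to exhibit a surjective quasi-isomorphism $\rho \colon \DM \to H_\ast(\DM)$ of $(1,3)$-truncated $\mathfrak{K}$-modular operads, where $H_\ast(\DM)$ is endowed with the induced $\mathfrak{K}$-modular operad structure (well-defined because the compositions $\mu_\gamma$ are chain maps, so that composing a cycle with a boundary yields a boundary) and the zero differential.

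Set $W_{(g,n)} := 6g+2n-6$ and $r_s := (W_{(g,n)}-s)/2$. By Lemma \ref{deglem}, each row $\DM(g,n)^{\bullet,-s}$ is supported in external degrees $0 \le r \le r_s$, so in particular $d$ vanishes on the top component $\DM(g,n)^{-r_s,-s}$ for lack of target, and every top-row element is automatically a cycle. By Lemma \ref{13lem}, for $(g,n) < (1,4)$ the homology is concentrated at $r = r_s$, giving $H^{-r_s,-s}(\DM(g,n)) \cong \DM(g,n)^{-r_s,-s}/\mathrm{Im}(d)$; rows for which $r_s$ is not a non-negative integer are entirely acyclic.

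Define $\rho$ bidegree-wise: let $\rho$ be the canonical quotient $\DM(g,n)^{-r_s,-s} \twoheadrightarrow H^{-r_s,-s}(\DM(g,n))$ on each top component, and let $\rho$ vanish on every other bigraded component. This is a chain map because the differential of any element either lands in a component where $\rho = 0$ or lands in the top row as a boundary, which $\rho$ annihilates. By construction $\rho$ induces the identity on $H_\ast$, so it is a quasi-isomorphism. The remaining task is to verify the operadic compatibility $\rho \circ \mu^{\DM}_\gamma = \mu^{H_\ast}_\gamma \circ \bigl(\bigotimes_v \rho\bigr)$ for every modular graph $\gamma$.

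The key computation is that the \emph{anti-diagonal defect} $\delta(r,s) := W_{(g,n)} - 2r - s \ge 0$ is additive under composition: using $\sum_v g(v) + \beta_1(\gamma) = g$, $\sum_v |v| = n + 2|E(\gamma)|$, and $|V(\gamma)| - |E(\gamma)| = 1 - \beta_1(\gamma)$, a direct substitution yields $\delta(\mu_\gamma(x_\bullet)) = \sum_v \delta(x_v)$. Consequently, composition preserves the zero-defect subspace, while strict positivity of any input's defect propagates to the output. The compatibility then splits into two cases: if any input has positive defect both sides vanish, since the output lies strictly below the anti-diagonal where $\rho = 0$; if all inputs have zero defect both sides return the class $[\mu_\gamma(x_\bullet)]$ in $H_\ast(\DM)$. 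The main obstacle is this defect calculation, essentially bookkeeping of Euler characteristic; once established, the rest of the argument is formal.
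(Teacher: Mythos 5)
Your proof is correct, and it builds the same map as the paper: both are supported on the anti-diagonal $2r+s=6g+2n-6$, which by the equality analysis in the proof of Lemma \ref{deglem} consists precisely of graphs whose vertices carry dual fundamental classes, and both send such an element to its homology class. The difference is that the two verifications are organized dually. The paper defines its map on the generators of the free modular operad underlying $\DM$ (dual fundamental class corollas go to point classes), so compatibility with the operad structure is automatic from freeness, and the work goes into showing the induced map on homology is surjective --- again by combining Lemma \ref{13lem} with the observation that anti-diagonal classes are represented by fundamental-class-labeled graphs. You instead define the map component-wise as projection to homology, so the quasi-isomorphism property is immediate, and the work goes into multiplicativity, which you settle by the additivity of the defect $6g+2n-6-2r-s$ under composition; that computation is exactly the chain of (in)equalities in the proof of Lemma \ref{deglem}, reread as an identity via $\sum_v(6g_v+2n_v-6)=6g+2n-6-2|E(\gamma)|$. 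Your route has the mild advantage of not invoking the universal property of the Feynman transform and of making explicit why the projection is an operad map; the paper's is shorter because freeness absorbs that bookkeeping. Both rest on the same two inputs, Lemmas \ref{deglem} and \ref{13lem}.
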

\begin{proof}  
	For each $(g,n)\leq (1,3)$, the corolla labeled by the dual fundamental class determines a non-boundary cycle in $\DM$.  Consider the unique $\mathfrak{K}$-modular operad map $\DM \to H_\ast(\DM)$ induced by taking each such labeled corolla to its homology class.  		
	Combining Lemma $\ref{13lem}$ and the proof of Lemma $\ref{deglem}$, one sees that in this range homology classes in $\DM$ are supported on graphs whose vertices are labeled by dual fundamental classes. Thus the (1,3)-truncation of the induced map on homology will be surjective, and hence an isomorphism.
\end{proof}

With respect to the $\mathfrak{K}$-modular operad map in Equation $\ref{zeta}$, this result can be restated as:

\begin{corollary} \label{13qi} 	The (1,3)-truncation of the morphism $f^0$ is a quasi-isomorphism.
\end{corollary}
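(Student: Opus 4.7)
The plan is to derive Corollary \ref{13qi} as an essentially immediate consequence of the formality statement in Corollary \ref{13formal}, after unpacking the definition of $f^0$.

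First I would recall the factorisation $f^0 = \alpha \circ \beta$ dictated by Equation \ref{zeta}: here $\beta \colon \DM \to H_\ast(\DM)$ is the map of $\mathfrak{K}$-modular operads sending each corolla labeled by a dual fundamental class to the homology class it represents, and $\alpha \colon H_\ast(\DM) \stackrel{\cong}{\to} \mathfrak{p}H_\ast(\op{M})$ is the previously fixed $\mathbb{S}$-module isomorphism. Both $H_\ast(\DM)$ and $\mathfrak{p}H_\ast(\op{M})$ carry vanishing differential — the former because it is a homology, the latter by construction — so the isomorphism $\alpha$ is trivially a quasi-isomorphism of chain complexes.

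Next I would invoke Corollary \ref{13formal}, which identifies the map $\beta$ as a quasi-isomorphism in the $(1,3)$-truncation. Since the composition of two quasi-isomorphisms is again a quasi-isomorphism, the $(1,3)$-truncation of $f^0 = \alpha \circ \beta$ is a quasi-isomorphism, as desired.

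There is no substantive obstacle here; the combinatorial content — that in this range every non-trivial cycle in $\DM$ is supported on a graph whose vertices are labeled by dual fundamental classes — has already been absorbed into Lemmas \ref{deglem} and \ref{13lem}, and repackaged in Corollary \ref{13formal}. Corollary \ref{13qi} merely translates that formality statement into a statement about the specific morphism $f^0$ selected in Equation \ref{zeta}. Accordingly, the proof should occupy no more than a line or two.
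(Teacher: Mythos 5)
Your proposal is correct and matches the paper exactly: the paper offers no separate argument for Corollary \ref{13qi}, stating only that it is a restatement of Corollary \ref{13formal} via the factorisation of $f^0$ in Equation \ref{zeta} through the map $\DM \to H_\ast(\DM)$ followed by the fixed isomorphism. Your unpacking of that factorisation and the observation that the second factor is an isomorphism is precisely the intended one-line justification.
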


\subsection{Analysis of the double complex $\DM(1,4)$}
To present the obstruction to formality in Theorem $\ref{thm2}$, we look at this double complex in biarity $(1,4)$.  This obstruction lies in internal degree $s=4$, although it will be helpful for us to calculate the entire homology as a graded $S_4$-module.  The heart of this calculation is the fact that $\mathfrak{gr}^W_4 H^4(\op{M}_{1,4})=0$, \cite[Theorem 1.1]{PetG1}.

\begin{lemma} \label{hlem}  
	\begin{equation*}
 H_\ast(\FTGK(H_\ast(\overline{\op{M}}))(1,4))^{-r,-s} \cong \begin{cases}
	V_{3,1} & \text{ if } s=0 \text{ and } r=4 \\ 
V_{2,1,1} & \text{ if } s=2 \text{ and } r=3 \\
	V_{4} & \text{ if } s=4 \text{ and } r=1\\
	V_{4} & \text{ if } s=8 \text{ and } r=0 \\
	0 & \text{ else } 
\end{cases}
\end{equation*}	
\end{lemma}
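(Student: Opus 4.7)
The plan is to identify the bigraded homology of $\DM(1,4)$ with Petersen's computation of the $S_4$-equivariant mixed Hodge structure on $H^\ast(\op{M}_{1,4})$ from \cite{PetG1}, by way of the internal-degree splitting of Equation \ref{split}. The refinement of Theorem \ref{thm1} by weight, reviewed in the paragraph following Equation \ref{split}, gives an $S_4$-equivariant isomorphism
\begin{equation*}
H_\ast(\DM(1,4))^{-r,-s} \cong \mathfrak{gr}^W_s H^{r+s}_c(\op{M}_{1,4}),
\end{equation*}
and since $\op{M}_{1,4}$ is a smooth complex orbifold of complex dimension $4$ with $S_4$ acting by orientation-preserving homeomorphisms, $S_4$-equivariant Poincar\'e duality yields
\begin{equation*}
H_\ast(\DM(1,4))^{-r,-s} \cong (\mathfrak{gr}^W_{8-s} H^{8-r-s}(\op{M}_{1,4}))^\ast.
\end{equation*}

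Next I would extract the target data from Petersen's theorem: for $n=4$ the only nonzero weight-graded pieces of $H^\ast(\op{M}_{1,4})$ are $\mathfrak{gr}^W_0 H^0 \cong V_4$, $\mathfrak{gr}^W_4 H^3 \cong V_4$, $\mathfrak{gr}^W_6 H^3 \cong V_{2,1,1}$, and $\mathfrak{gr}^W_8 H^4 \cong V_{3,1}$. Matching bidegrees under the isomorphism above sends these to $(r,s) = (0,8), (1,4), (3,2), (4,0)$ respectively, reproducing the four nonzero entries in the table. All four $S_4$-irreducibles appearing are self-dual, so the dual in the identification is cosmetic. Every other bigrading has vanishing homology because the corresponding weight piece is zero; the most essential of these vanishings, $\mathfrak{gr}^W_4 H^4(\op{M}_{1,4}) = 0$, is already singled out in the paragraph preceding the statement and kills bidegree $(0,-4)$.

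The main obstacle is not the invocation of Petersen's result but careful verification of the first step: that the internal-degree splitting of $\DM(1,4)$ recovers the weight filtration on $H^\ast_c(\op{M}_{1,4})$ as an $S_4$-equivariant isomorphism. This follows from the weight spectral sequence argument of \cite{AWZ} once conventions are aligned with those of Petersen; the remainder of the argument is bookkeeping. Should a more self-contained route be desired, Lemma \ref{deglem} handles the top row $s=0$ via combinatorics of trivalent stable graphs (comparing with \cite{CGP}), Lemma \ref{row2lem} constrains non-boundary cycles in the $s=2$ row so the representation $V_{2,1,1}$ can be identified by direct character counts, and the rows $s=4,8$ reduce to the highlighted Petersen input.
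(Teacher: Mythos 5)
Your proposal takes a genuinely different, ``top-down'' route: identify $H_\ast(\DM(1,4))^{-r,-s}$ with $\mathfrak{gr}^W_s H^{r+s}_c(\op{M}_{1,4})$ via the splitting of Equation $\ref{split}$, apply equivariant Poincar\'e duality, and read the answer off from Petersen. The paper instead works ``bottom-up'' inside the graph complex: it uses targeted vanishing statements from the literature (Harer for $H_1=H_2=0$ and $H_{k\geq 5}(\op{M}_{1,4})=0$, \cite{CGP} for the $s=0$ row, \cite{PetG1} essentially only for $\mathfrak{gr}^W_4H^4(\op{M}_{1,4})=0$ and the weight placement in degree $3$), and then pins down each row by comparing Euler characteristics of the explicit bigraded complex ($23-60+36=-1$ for $s=4$, $12-60+91-46=-3$ for $s=2$) with explicit cycle constructions and rank computations for the differentials. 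The bookkeeping identification of bidegrees and the self-duality of $S_4$-irreducibles in your write-up are correct.

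The gap in your main route is the step you flag as ``extraction'': you assert that Petersen's Theorem 1.1 directly delivers the complete $S_4$-equivariant weight-graded cohomology of $\op{M}_{1,4}$, including $\mathfrak{gr}^W_8H^4\cong V_{3,1}$, $\mathfrak{gr}^W_6H^3\cong V_{2,1,1}$, $\mathfrak{gr}^W_4H^3\cong V_4$, and the vanishing of every other piece. That is precisely the content being proved, and the paper does not treat it as quotable: the $V_{3,1}$ in top weight is obtained from \cite{CGP}, and the $V_{2,1,1}$ in the $s=2$ row is obtained by explicitly computing $coker(d^{-2,-2})$ --- constructing the six-dimensional space of triangle cycles of type $V_{3,1}\oplus V_{2,1,1}$, showing the complementary part is hit using the WDVV relations and Lemma $\ref{row2lem}$, and closing the argument with the Euler characteristic and Harer's vanishing. (The remark following the lemma, that even $\beta_4(\op{M}_{1,4})=3$ is ``a consequence of the Lemma,'' signals that the author is not importing this data.) Your fallback sketch is essentially the paper's argument, but ``direct character counts'' will not determine the cokernel of $d^{-2,-2}$ without the rank computations just described. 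Finally, note that the computational route is not merely a stylistic choice: the explicit representing cycles it produces (the basis $\lambda_i$, the class $\omega$) are used downstream in the construction of $f^\omega$ and the $\infty$-quasi-isomorphism, so a purely citation-based proof of the lemma would leave that later work unsupported.
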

\begin{proof}  The case $s=8$ is trivial.
	Adapting the results used in the proof of Lemma $\ref{13lem}$ takes care of the cases $s=0,6$.  For the case $s=4$ we again use \cite[Theorem 1.1]{PetG1}, but here combine it with \cite{Harer} to determine that the only homology can occur when $r=1$.  We then compute that the Euler characteristic of the $s=4$ row is $23-60+36=-1$, which equals the Euler characteristic of the $S_4$ invariant subcomplex of $\FTGK(H_\ast(\overline{\op{M}}))(1,4)^{\bullet,-4}$, which is $7-14+6 = -1$ (see Lemma $\ref{Qcomplex}$ below) to complete the calculation in the case $s=4$.

It remains to consider the case $s=2$.  Denote the differential emanating from bidegree $(-r,-s)$ as $d^{-r,-s}$.
	We first compute $coker(d^{-2,-2}) \cong V_{2,1,1}$ as follows.
	Looking at bidegree  $(-3,-2)$, consider the subspace of triangles with 2 legs on one vertex and one leg on the other two vertices, whose vertices are labeled by dual fundamental classes.  This is a $6$-dimensional space of cycles, whose $S_4$-representation type is $V_{3,1}\oplus V_{2,1,1}$.  This $6$ dimensional space can only be hit by the differential coming from $(1,4)$ graphs with parallel edges
	whose two vertices are labeled by classes of internal degrees $0$ and $-2$ respectively.  Taking coinvariants with respect to the graph automorphism which permutes these parallel edges, and using the respresentation type of $H_2(\overline{\op{M}}_{0,5})$ via \cite[Theorem 5.9]{GetHyCom}  we see this space is isomorphic to the standard representation $V_{3,1}\oplus V_{4}$.  Thus $coker(d^{-2,-2})$ must include this copy of $V_{2,1,1}$.

To show that this is the entire cokernel, we invoke Lemma $\ref{row2lem}$ to conclude the remainder of the bidegree $(-3,-2)$ part is in the image of the Feynman transform differential.
It remains to show that  $d^{-2,-2}$ restricted to the $V_{3,1}$ summand of the four dimensional space corresponding to (1,4) graphs with parallel edges has full rank.  This differential is induced by applying the Feynman transform differential to the unique (45) alternating class in $H_2(\overline{\op{M}}_{0,5})^\ast$, and so this is a straight forward calculation using the WDVV relations.

Having computed $coker(d^{-2,-2}) \cong V_{2,1,1}$ we observe that (by Lemma $\ref{deglem}$) this complex is supported on total degree $\geq -5$, so $coker(d^{-2,-2})$ is the homology in bidegree $(-3,-2)$.  Using the fact $H_{k\geq 5}(\op{M}_{1,4})=0$ \cite[Theorem 4.1]{Harer}, 
the only other homology could lie in bidegree $(-2,-2)$, but a quick counting of graphs shows the Euler characteristic of the $s=2$ row to be $12-60+91-46 =-3$, so no such homology exists.
\end{proof}

\begin{remark}  We could circumvent the use of weights in the above proof if we knew {\it a priori} that the betti number $\beta_4(\op{M}_{1,4})=3$ (which is a consequence of the Lemma).  
\end{remark}

A consequence of the non-vanishing of $H_{-5}(\DM(1,4))^{-1,-4} \cong  \Sigma^{-8}\mathfrak{gr}^W_4 H^3(\op{M}_{1,4})$ is the non-formality of $\mathfrak{p}H_\ast(\op{M})$, as originally proven in \cite[Proposition 1.11]{AP}.

\begin{corollary} \label{notformal}
   There does not exist a (1,4)-truncated $\mathfrak{K}$-modular operad structure on $\mathfrak{p}H_\ast(\op{M})$ for which $\FTGK(H_\ast(\overline{\op{M}}))$ and $\mathfrak{p}H_\ast(\op{M})$ are quasi-isomorphic. 
\end{corollary}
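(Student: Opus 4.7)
The plan is to argue by contradiction. Suppose there is a (1,4)-truncated $\mathfrak{K}$-modular operad structure on $\mathfrak{p}H_\ast(\op{M})$ and a quasi-isomorphism $\Phi$ relating it to $\DM$ as (1,4)-truncated $\mathfrak{K}$-modular operads. Since $\mathfrak{p}H_\ast(\op{M})$ has vanishing internal differential, $\Phi$ induces an isomorphism of graded $S_n$-modules $\mathfrak{p}H_\ast(\op{M})(g,n) \cong H_\ast(\DM(g,n))$ for $(g,n)\leq (1,4)$.  I would transfer the internal-degree splitting of Equation $\ref{split}$ along this iso to endow each $\mathfrak{p}H_\ast(\op{M})(g,n)$ with a second, transferred grading.

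Since both the Feynman transform differential on $\DM$ and the graph-insertion $\mathfrak{K}$-modular operad composition preserve internal degree $s$, the induced $\mathfrak{K}$-modular operad structure on $H_\ast(\DM)$ is bigraded, and via the qi $\Phi$ the putative strict structure on $\mathfrak{p}H_\ast(\op{M})$ must preserve the transferred bigrading.  By Lemma $\ref{13lem}$, for each $(g,n)<(1,4)$ the bigrading on $\mathfrak{p}H_\ast(\op{M})(g,n)$ is concentrated on the diagonal $2r+s=6g+2n-6$.  A short calculation (tracking the bidegree shift $(-1,0)$ contributed by $\mathfrak{K}$ per edge) confirms that strict one-edge $\mathfrak{K}$-compositions with on-diagonal inputs always land on the diagonal $2r+s=6g+2n-6$ of the output biarity.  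Hence every strict composition into biarity $(1,4)$ lands on the $(1,4)$-diagonal $2r+s=8$.

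By Lemma $\ref{hlem}$, however, $\mathfrak{p}H_\ast(\op{M})(1,4)$ contains a non-zero class at bidegree $(-1,-4)$, for which $2r+s=6\neq 8$.  This off-diagonal class lies outside the image of any strict composition from lower biarities and so must be a fresh generator in biarity $(1,4)$ of the putative strict structure.  Applying $\FTGK$ to $\Phi$ and using biduality $\FTGK^2 \simeq \mathrm{id}$ yields a quasi-isomorphism of modular operads $\FTGK(\mathfrak{p}H_\ast(\op{M})) \simeq H_\ast(\overline{\op{M}})$, under which the fresh off-diagonal generator would contribute a cohomology class of non-top weight to $H_\ast(\overline{\op{M}}_{1,4})$.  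This contradicts the purity of the Hodge structure on the smooth projective orbifold $\overline{\op{M}}_{1,4}$.

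The main obstacle is the final Hodge-theoretic step: pinning down precisely how the off-diagonal generator in $\mathfrak{p}H_\ast(\op{M})(1,4)$ gives rise to a non-pure cohomology class in $H_\ast(\overline{\op{M}}_{1,4})$, and not merely a generator that cancels against differentials in the bigraded Feynman transform complex.  This is essentially the dimension count underlying \cite[Proposition 1.11]{AP}, which the present argument recovers in the (1,4)-truncation by combining the bigraded diagonality of Lemma $\ref{13lem}$ with the off-diagonal non-vanishing of Lemma $\ref{hlem}$.
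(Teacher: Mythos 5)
Your steps through the identification of the off-diagonal class are sound and match the mechanism the paper uses: the transferred bigrading, the diagonality $2r+s=6g+2n-6$ of $H_\ast(\DM(g,n))$ for $(g,n)<(1,4)$ from Lemma~\ref{13lem}, the check that one-edge $\mathfrak{K}$-compositions preserve the diagonal, and the conclusion via Lemma~\ref{hlem} that the class in bidegree $(-1,-4)$ cannot lie in the image of any strict composition from lower biarities. But your final step is a genuine gap, and you have in fact identified it yourself. ``The off-diagonal class is a fresh generator'' is not by itself a contradiction --- a $\mathfrak{K}$-modular operad is perfectly entitled to have indecomposable elements. Your attempt to convert this into a contradiction by applying $\FTGK$ and invoking purity of the Hodge structure on $\overline{\op{M}}_{1,4}$ does not close: you would need to show the fresh generator survives to homology in $\FTGK(\mathfrak{p}H_\ast(\op{M}))$ rather than cancelling against differentials, and you would need to know that the combinatorial internal grading on $\FTGK(\mathfrak{p}H_\ast(\op{M}))$ is compatible with actual Hodge weights on $H_\ast(\overline{\op{M}}_{1,4})$ --- which is essentially the comparison you are trying to establish, so the argument is circular as it stands.

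The missing observation is much more elementary and stays entirely on the $\DM$ side. First, reduce (as the paper does, using cofibrancy of the Feynman transform, \cite[Theorem 8.4.2]{KW}) the zig-zag of quasi-isomorphisms to a single strict map $\Phi\colon \DM \to \mathfrak{p}H_\ast(\op{M})$; your proposal never addresses the direction or the zig-zag. Then note that $\DM(1,4)$ in total degree $-5$ contains no corollas (there is no $H_5(\overline{\op{M}}_{1,4})$ to label one with), so every element there is supported on graphs with at least one edge. Since $\Phi$ is a map of $\mathfrak{K}$-modular operads, its image in total degree $-5$ therefore lies in the span of the strict compositions $\mu^{\mathfrak{p}H}_\gamma$ applied to classes from biarities $<(1,4)$ --- and by your own diagonality argument that span misses the line corresponding to the $(-1,-4)$ homology class. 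Hence $H_{-5}(\Phi)$ cannot be surjective, contradicting the assumption that $\Phi$ is a quasi-isomorphism. This is exactly how the paper concludes; no Hodge theory on $\overline{\op{M}}_{1,4}$ is needed beyond the input already packaged into Lemma~\ref{hlem}.
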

\begin{proof}  	
	If $\DM$ and $\mathfrak{p}H_\ast(\op{M})$ were quasi-isomorphic, there would be a quasi-isomorphism $\DM\stackrel{\sim}\to \mathfrak{p}H_\ast(\op{M})$ (this follows from cofibrancy of the Feynman transform in the standard model structure for dg modular operads \cite[Theorem 8.4.2]{KW}).  This putative map would be defined by what it did to labeled corollas.  The necessity that cycles map to cycles and Lemma $\ref{hlem}$ combine to show that such a map can only map those corollas labeled by dual fundamental classes to something non-zero.  But since $\DM(1,4)^{-1,-4}$ has non-zero homology, the induced map on homology can't be surjective.
	\end{proof}

\begin{definition}\label{omegadef} We now fix, once and for all, a generator of $V_4\cong H_{-5}(\DM)(1,4)^{-1,-4}$.  We define $\omega \in \Sigma^{-8}H_3(\op{M}_{1,4})$ to be the image of this class under the isomorphism of Equation $\ref{zeta}$.
\end{definition}

\subsection{Analysis of the weight $s=4$, $S_4$-invariant part.}
By Corollary $\ref{notformal}$ the first obstruction to formality of $\DM$ lives in the $S_4$ invariant subcomplex of $\DM(1,4)^{\bullet, -4}$.  We may equivalently view this obstruction as living in the quotient complex of $S_4$-coinvariants.  In this section we will compute the differentials in this complex.

We start by considering the $S_4$-invariant subcomplex of the coFeynman transform (Remark $\ref{coFTrmk}$) in internal degree 4.  Explicitly that is the chain complex:
\begin{equation*}
(\ds\bigoplus_{\gamma \in \Gamma(1,4)_2}
H_4(\overline{\op{M}})(\gamma)\tensor_{Aut(\gamma)}\mathfrak{K}^{-1}(\gamma))^{S_4} \to 
(\ds\bigoplus_{\gamma \in \Gamma(1,4)_1}
H_4(\overline{\op{M}})(\gamma)\tensor_{Aut(\gamma)}\mathfrak{K}^{-1}(\gamma))^{S_4}  \to  H_4(\overline{\op{M}}_{1,4})^{S_4}
\end{equation*}
where $H_4(\overline{\op{M}})(\gamma)$ indicates that the sum of the degrees of the vertex labels of $\gamma$ is $4$, and $\mathfrak{K}^{-1}(\gamma)$ is the top exterior power of the set of edges, concentrated in degree $+|$edges of $\gamma|$.  The differential is given by the sum of edge contractions, under the convention that an edge $e$ in the last position in a wedge product $\dots\wedge e \in \mathfrak{K}^{-1}(\gamma)$ is removed upon contraction to produce a wedge product in $\mathfrak{K}^{-1}(\gamma/e)$. 
For notational simplicity we refer to this complex as $\op{Q} = \op{Q}_6\to\op{Q}_5\to\op{Q}_4$ from now on.

Figure $\ref{fig:14graphs}$ lists the elements in $\Gamma(1,4)^{S_4}_2$.   We have also chosen an auxiliary order on the edges of each graph for future reference.  We follow the notation given in \cite{GetGW}.  We use the notation $\delta_\bullet$ for an unspecified graph from the list.

\begin{figure}
	\centering
	\includegraphics[width=0.7\linewidth]{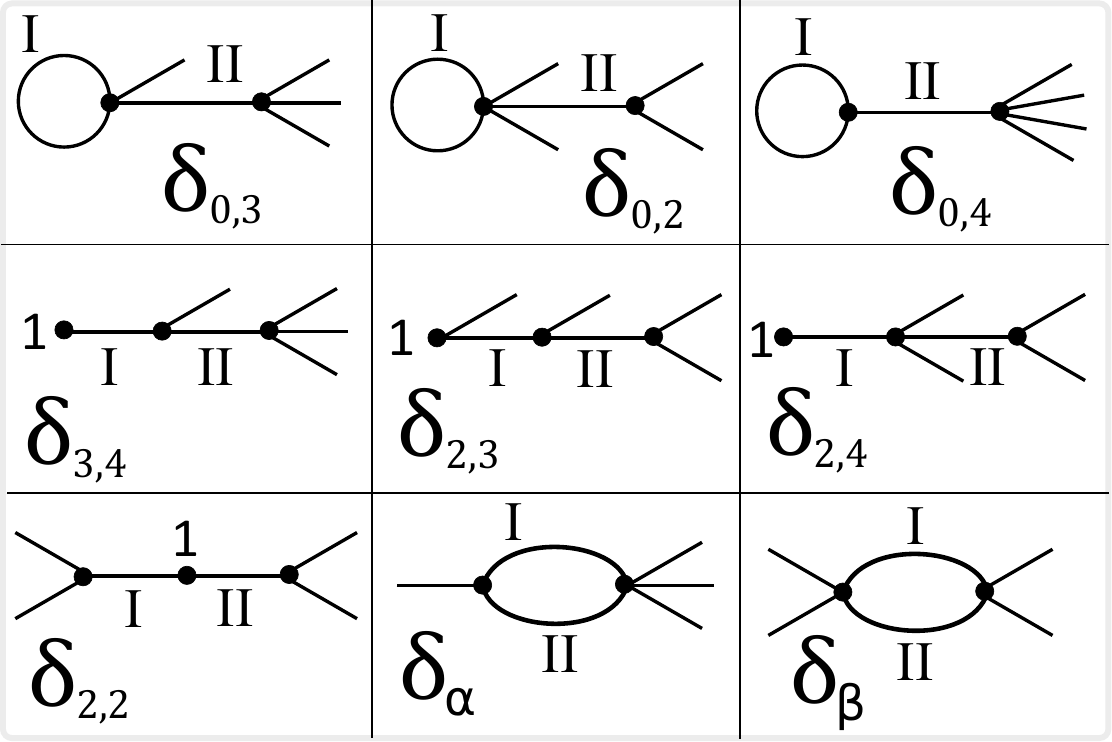}
	\caption{Stable $(1,4)$ graphs with two edges.  The edges are labeled I and II.  The genus of a vertex is indicated only for vertices of genus $1$.}
	\label{fig:14graphs}
\end{figure}

\begin{construction}  To each graph $\delta_\bullet$ we associate
\begin{itemize}
	\item An element in $\op{Q}_4$, which we also call $\delta_\bullet$.
			\item A pair of elements in $\op{Q}_5$, which we call $\delta^\text{I}_\bullet$ and $\delta^{\text{II}}_\bullet$.
	\item An element in $\op{Q}_6$, call it $\eta_\bullet$. 
\end{itemize}	
The element $\delta_\bullet$ is defined by labeling the vertices of the graph $\delta_\bullet$ with fundamental classes, summing over all leg labels and then composing each term using the modular operad structure.  Here, our conventions follow \cite{GetGW}: our convention is to use the topological fundamental class; to recover the stack/scheme theoretic fundamental class one should divide by the order of the stabilizer of a generic point. When summing over leg labels we disregard the auxiliary labeling of the edges.  So $\delta_{0,4}$ corresponds to one leaf labeled graph; $\delta_{0,2}$ corresponds to a sum of six leaf labeled graphs; $\delta_{2,2}$ corresponds to a sum of three labeled graphs and so on.

The element $\delta^\text{I}_\bullet$ is given by labeling the graph $\delta_\bullet$ with fundamental classes, summing over leg labels and then composing at the edge labeled by $\text{I}$ in each term.  Similarly the element $\delta^{\text{II}}_\bullet$ is given by contracting the edge labeled by $\text{II}$ in each term instead.  We emphasize that here, unlike in the previous paragraph, when we sum over leg labels we do not disregard the auxiliary edge labeling; if we did the result would not be $S_4$ invariant.  This distinction is only relevant in the graph $\delta_{2,2}$.  In particular both $\delta_{2,2}^\text{I}$ and $\delta_{2,2}^\text{II}$ are sums of (the same) six terms.

Finally the element $\eta_\bullet$ is given by labeling the graph $\delta_\bullet$ with fundamental classes, summing over leg labels and tensoring with $\text{I}\wedge \text{II} \in  \mathfrak{K}^{-1}(\delta_\bullet)$.  Again, we do not disregard the edge order when summing over leg labels, so $\eta_{2,2}$ is {\it a priori} a sum of six terms, although these terms cancel in pairs and $\eta_{2,2}=0$ as the following lemma indicates.
\end{construction}

\begin{lemma}\label{Qcomplex} The elements constructed above span $\op{Q}$, and a complete set of relations in each degree is given by:
\begin{enumerate}
	\item In $\op{Q}_6$ the $9$ vectors $\eta_\bullet$ satisfy $\eta_{2,2}=\eta_\alpha=\eta_\beta=0$.  In particular $\op{Q}_6$ is $6$-dimensional.
	\item In $\op{Q}_5$ the $18$ vectors $\delta_{\bullet}^\text{I}, \delta_\bullet^\text{II}$ satisfy 
	\begin{enumerate}
		\item $\delta_{2,2}^\text{I} = \delta_{2,2}^\text{II}$,  $\delta_{\alpha}^\text{I} = \delta_{\alpha}^\text{II}$, and  $\delta_{\beta}^\text{I} = \delta_{\beta}^\text{II}$,
	\item $	\delta_{0,2}^{\text{II}} + 3\delta_{0,3}^{\text{II}}+6 \delta_{0,4}^{\text{II}} = 3\delta_{\alpha}^{\text{II}} + 4\delta_{\beta}^{\text{II}}$.
		\end{enumerate}		
  In particular $\op{Q}_5$ is $14$-dimensional.
	\item In $\op{Q}_4$, the $9$ vectors $\delta_\bullet$ satisfy
		\begin{enumerate}
\item	$\delta_{0,2}+ 3\delta_{0,3}+6 \delta_{0,4} = 3\delta_{\alpha} + 4\delta_{\beta}, $
\item $12\delta_{2,2}-4\delta_{2,3} - 2 \delta_{2,4} + 6\delta_{3,4}  + \delta_{0,3} + \delta_{0,4}-2\delta_{\beta}=0.$
	\end{enumerate}
In particular $\op{Q}_4$ is $7$ dimensional.
\end{enumerate}
\end{lemma}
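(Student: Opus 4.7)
The plan is to treat the three parts of the lemma separately, leveraging (i) edge-swap automorphisms of the underlying stable $(1,4)$-graphs, (ii) the WDVV relation in $H_2(\overline{\op{M}}_{0,5})$, and (iii) Getzler's elliptic relation, and then verify the stated dimensions by independent counts.

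For part (1), I would go through the nine graphs of Figure \ref{fig:14graphs} and identify which admit an automorphism that swaps edges I and II. The graphs $\delta_{2,2}$, $\delta_\alpha$, and $\delta_\beta$ each carry such a swap: for $\delta_{2,2}$ the two parallel edges are interchangeable, while for $\delta_\alpha$ and $\delta_\beta$ there is an involution exchanging the two edges and permuting vertices/legs among themselves. In each case this automorphism acts trivially on the vertex labels (which are fundamental classes) but as $-1$ on $\mathfrak{K}^{-1}(\gamma) = \det(\text{edges})$, forcing $\eta_\bullet = 0$ upon taking coinvariants. For the remaining six graphs, no such sign-killing automorphism exists, and since the six vectors are supported on distinct isomorphism classes of stable graphs they are linearly independent, giving $\dim \op{Q}_6 = 6$.

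For part (2), relations (2a) are immediate from the same edge-swap automorphisms: in each of $\delta_{2,2}, \delta_\alpha, \delta_\beta$ the automorphism sends the sum of leg-labeled graphs contracted at I bijectively onto the corresponding sum contracted at II. Relation (2b) is the main calculation: the vectors $\delta^{\text{II}}_\bullet$ appearing on both sides are all supported on one-edge graphs carrying a genus-zero, five-valent vertex (the result of contracting edge II in $\delta_{0,2}, \delta_{0,3}, \delta_{0,4}, \delta_\alpha, \delta_\beta$). Pushing the WDVV relation on $\overline{\op{M}}_{0,5}$ through the composition along the remaining edge, and then summing over $S_4$-orderings of the external legs, yields exactly the stated identity. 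The integer coefficients $1,3,6,3,4$ are read off as the number of distinct leg labelings of the underlying unordered graph that produce a given $S_4$-orbit. To conclude that $18-3-1=14$ is the exact dimension, I would compute the $S_4$-invariant Euler characteristic of $\op{Q}_5$ directly from the $S_n$-equivariant data for $H_4(\overline{\op{M}}_{0,5})$, $H_4(\overline{\op{M}}_{1,2})$, $H_4(\overline{\op{M}}_{1,3})$, and $H_2 \otimes H_2$ summands (as in the proof of Lemma \ref{hlem}) and match.

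For part (3), relation (3a) is obtained either by contracting the remaining edge in (2b), or equivalently by pushing forward WDVV on $\overline{\op{M}}_{0,5}$ along the final gluing — it is the standard genus-zero boundary relation on $\overline{\op{M}}_{1,4}$. Relation (3b) is (a rescaling of) Getzler's elliptic relation from \cite{GetGW}; I would quote it and match the coefficient normalizations, noting the use of topological rather than stack-theoretic fundamental classes. Independence and completeness of the two relations is verified by showing the nine classes $\delta_\bullet \in H_4(\overline{\op{M}}_{1,4})$ span a $7$-dimensional subspace, which is consistent with the dimension count used in the proof of Lemma \ref{hlem} for the $s=4$ row.

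The main obstacle will be establishing (2b) with the correct integer coefficients: this requires careful combinatorial bookkeeping to track leg labelings as one lifts the WDVV relation on $\overline{\op{M}}_{0,5}$ to an $S_4$-invariant relation among one-edge graphs of type $(1,4)$, and matching the result against the conventions on fundamental classes used in defining the $\delta^{\text{II}}_\bullet$. A secondary subtlety is ensuring completeness of the stated relations, which is handled by dimension counting against the cohomological data already invoked for Lemma \ref{hlem}.
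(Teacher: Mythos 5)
Your overall architecture matches the paper's (edge-swap automorphisms acting by $-1$ on $\mathfrak{K}^{-1}$ for part (1) and relations (2a); citing Getzler for part (3); decomposing $\op{Q}_5$ and using WDVV for relation (2b)), but there are two concrete problems in part (2). First, you have the wrong moduli space for relation (2b): the five vectors $\delta_{0,2}^{\text{II}},\delta_{0,3}^{\text{II}},\delta_{0,4}^{\text{II}},\delta_{\alpha}^{\text{II}},\delta_{\beta}^{\text{II}}$ all lie in the summand of $\op{Q}_5$ indexed by the one-edge graph $\mathsf{D}$ of Figure $\ref{fig:141graphs}$, a single genus-zero vertex with four legs and a loop, i.e.\ a \emph{six}-valent $(0,6)$ vertex, not a five-valent one. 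The relation among these five vectors is the image of the WDVV relation among boundary divisor classes in $H_4(\overline{\op{M}}_{0,6})$ after passing to $S_{\{5,6\}}$-coinvariants and restricting to $S_4$-invariants; there is no $(0,5)$ vertex anywhere in this summand, so ``pushing the WDVV relation on $\overline{\op{M}}_{0,5}$ through the composition along the remaining edge'' does not parse — the contraction producing $\delta_\bullet^{\text{II}}$ has already been performed, and what remains is a single vertex. (WDVV on $\overline{\op{M}}_{0,5}$ is relevant elsewhere, namely to the summand containing $\delta_{3,4}^{\text{II}}$ and $\delta_{2,4}^{\text{II}}$, where it imposes no relation on the invariant part.)

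Second, your logic for completeness and spanning is circular as stated. Knowing that the $18$ vectors satisfy four relations shows only that they span a subspace of dimension $\leq 14$; matching this against an independently computed $\dim\op{Q}_5=14$ does not yet prove either that the vectors span $\op{Q}_5$ or that the listed relations are exhaustive — for that you must exhibit $14$ of the vectors that are linearly \emph{independent}. The paper does this summand by summand over the four graphs $\mathsf{A},\mathsf{B},\mathsf{C},\mathsf{D}$ (of dimensions $3+3+4+4$), and the arguments are not automatic: for the summand with a $(1,3)$ vertex one needs that the four compositions span $H_4(\overline{\op{M}}_{1,3})^{S_2}$, which the paper deduces from injectivity of the Feynman transform differential out of $H_4(\overline{\op{M}}_{1,3})^\ast$ (Lemma $\ref{13lem}$), and for the summand $\mathsf{D}$ one needs that WDVV is the \emph{only} relation among the five compositions into $(H_4(\overline{\op{M}}_{0,6})_{S_{\{5,6\}}})^{S_4}$. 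Neither step appears in your outline, so the claim that $(2a)$ and $(2b)$ form a complete set of relations is not yet established.
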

\begin{proof}
Since $\op{Q}_4 = H_4(\overline{\op{M}}_{1,4})^{S_4}$, the statements about $\op{Q}_4$ are \cite[Lemma 1.1]{GetGW} and \cite[Theorem 1.8]{GetGW}.  We next address the statements about $\op{Q}_6$.  Regarding $\eta_\alpha$ and $\eta_\beta$, passing to $Aut(\gamma)$ coinviants equates each term appearing in the sum with its negative.    Regarding $\eta_{2,2}$, we have a sum of six non-zero terms which cancel in pairs, seen by transposing $\text{I} \wedge \text{II}$.

It remains to consider the statements about $\op{Q}_5$.  Statement (a) is immediate, so these 18 vectors span a space of dimension $\leq 15$.  On other hand, the vector space $\op{Q}_5$ has a direct sum decomposition indexed by stable (1,4) graphs with one edge, of which there are four; see Figure $\ref{fig:141graphs}$.  Counting the dimensions of each summand, one sees that $\op{Q}_5$ is $3+3+4+4=14$ dimensional.  We show that a choice of 15 of the above 18 vectors not already known to be linearly dependent will span, and hence are subject to one additional relation.

\begin{figure}
	\centering
	\includegraphics[width=0.7\linewidth]{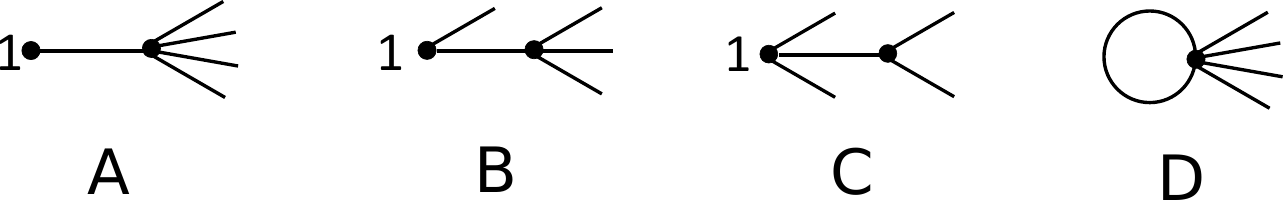}
	\caption{Stable $(1,4)$-graphs with one edge.}
	\label{fig:141graphs}
\end{figure}

Summand $\mathsf{A}$:  The vectors $\delta_{3,4}^\text{II}, \delta_{2,4}^\text{II}$ and $\delta_{0,4}^\text{I}$,  belong to this summand.  Comparing the non-fundamental class vertex labels; the first two are subject only to the WDVV relations \cite{GetHyCom} in $H_2(\overline{\op{M}}_{0,5})^{S_4}$, from which one checks their linearly independence.  The third lives in $H_0(\overline{\op{M}}_{1,1})$, hence these three classes are linearly independent and so span this three dimensional summand.

Summand $\mathsf{B}$: The vectors $ \delta_{3,4}^\text{I}$, $\delta_{0,3}^\text{I}$ and $\delta_{2,3}^\text{II}$,  belong to this summand.  
Comparing the non-fundamental class labels, the first two span the two dimensional space $H_2(\overline{\op{M}}_{1,2})$ (as in the proof of Lemma $\ref{row2lem}$), and the third lives in $H_0(\overline{\op{M}}_{0,4})$, so these three classes are linearly independent and hence span this three dimensional summand.

Summand $\mathsf{C}$: The vectors $ \delta_{0,2}^\text{I}$ $ \delta_{2,2}^\text{I}$, $\delta_{2,3}^\text{I}$ and $\delta_{2,4}^\text{I}$ belong to this summand.  Comparing the non-fundamental class vertex labels we have four classes in the four dimensional space $H_4(\overline{\op{M}}_{1,3})^{S_2}$.  If these classes did not span,  the Feynman transform differential emanating from  $H_4(\overline{\op{M}}_{1,3})^\ast$ would not be injective, which would violate Lemma $\ref{13lem}$.

Summand $\mathsf{D}$: The five vectors $ \delta_{0,3}^\text{II}$ $ \delta_{0,2}^\text{II}$, $\delta_{0,4}^\text{II}$ $ \delta_{\alpha}^\text{II}$ and $\delta_{\beta}^\text{II}$  belong to this summand.   Each has a unique vertex labeled by a class in the four dimensional space $(H_4(\overline{\op{M}}_{0,6})_{S_{\{5,6\}}})^{S_4}$.  These compositions are subject to the WDVV relation  from which we determine the relation and the linear independence of any subset of four of the five. 
\end{proof}
Using this lemma to choose bases, we now make explicit the differentials in the complex $\op{Q}$.  The differential $\op{Q}_6\to\op{Q}_5$ is given by:

\begin{center}
	$
	\begin{array}{r||c c c|c c c|c c c c|c c c c}
	\op{Q}_6\to\op{Q}_5 & \delta_{3,4}^\text{II} & \delta_{2,4}^\text{II} & \delta_{0,4}^\text{I} & \delta_{0,3}^\text{I} &  \delta_{3,4}^\text{I} & \delta_{2,3}^\text{II} & \delta_{2,4}^\text{I} &  \delta_{0,2}^\text{I} & \delta_{2,3}^\text{I} & \delta_{2,2}^\text{I} & \delta_{0,4}^\text{II} & \delta_{0,3}^\text{II} & \delta_{0,2}^\text{II} & \delta_{\beta}^\text{II} \\ 
	\hline 	\eta_{2,3} & 0 & 0 & 0 & 0 & 0 & 1 & 0  & 0 & -1 & 0 & 0 & 0 & 0 & 0 \\
	\hline 	\eta_{3,4} & 1 & 0  & 0 & 0 & -1 & 0 & 0 & 0 & 0 & 0 & 0 & 0 & 0 & 0 \\
	\hline 	\eta_{2,4} & 0 & 1  &  0 & 0 & 0 & 0 & -1 & 0 & 0 & 0 & 0 & 0 & 0 & 0 \\
	\hline 	\eta_{0,4} & 0 & 0 & -1 & 0 & 0 & 0  & 0 & 0 & 0 & 0 & 1  & 0 & 0 & 0 \\
	\hline 	\eta_{0,3} & 0  & 0 & 0 & -1 & 0 & 0 & 0 & 0 & 0 & 0 & 0 & 1 & 0 & 0  \\
	\hline 	\eta_{0,2} & 0 & 0 & 0 & 0 & 0 & 0 & 0 & -1 & 0 & 0 & 0 & 0 & 1 & 0	
	\end{array}
	$
\end{center}
The differential $\op{Q}_5\to\op{Q}_4$ is given by the transpose of:

\begin{center}
	$
	\begin{array}{r||c c c|c c c|c c c c|c c c c}
	(\op{Q}_5\to\op{Q}_4)^\text{T} & \delta_{3,4}^\text{II} & \delta_{2,4}^\text{II} & \delta_{0,4}^\text{I} & \delta_{0,3}^\text{I} &  \delta_{3,4}^\text{I} & \delta_{2,3}^\text{II} & \delta_{2,4}^\text{I} &  \delta_{0,2}^\text{I} & \delta_{2,3}^\text{I} & \delta_{2,2}^\text{I} & \delta_{0,4}^\text{II} & \delta_{0,3}^\text{II} & \delta_{0,2}^\text{II} & \delta_{\beta}^\text{II} \\ 
	\hline 	\delta_{2,2} & 0 & 0 & 0 & 0 & 0 & 0 & 0  & 0 & 0 & 2 & 0 & 0 & 0 & 6 \\
	\hline 	\delta_{2,3} & 0 & 0 & 0 & 0 & 0 & 1 & 0  & 0 & 1 & 0 & 0 & 0 & 0 & -2 \\
	\hline 	\delta_{3,4} & 1 & 0  & 0 & 0 & 1 & 0 & 0 & 0 & 0 & 0 & 0 & 0 & 0 & 3 \\
	\hline 	\delta_{2,4} & 0 & 1 & 0 & 0 & 0 & 0 & 1 & 0 & 0 & 0 & 0 & 0 & 0 & -1 \\
	\hline 	\delta_{0,4} & 0 & 0 & 1 & 0 & 0 & 0  & 0 & 0 & 0 & 0 & 1 & 0 & 0 & 1/2 \\
	\hline 	\delta_{0,3} & 0  & 0 & 0 & 1 & 0 & 0 & 0 & 0 & 0 & 0 & 0 & 1 & 0 & 1/2  \\
	\hline 	\delta_{0,2} & 0 & 0 & 0 & 0 & 0 & 0 & 0 & 1 & 0 & 0 & 0 & 0 & 1 & 0	
	\end{array}
	$
\end{center}

The linear dual of $\op{Q}$ is naturally isomorphic to the complex $\left[ \DM(1,4)^{\bullet,-4}\right]_{S_4}$.  If we abbreviate $A(\gamma):= H_4(\overline{\op{M}})(\gamma)\tensor \mathfrak{K}^{-1}(\gamma)$,
this natural isomorphism is given in each degree by:
\begin{equation}\label{naturalisos}
 (( \oplus_\gamma A(\gamma)_{Aut(\gamma)})^{S_4})^\ast 
 \to
  (( \oplus_\gamma A(\gamma)_{Aut(\gamma)})^\ast)_{S_4} 
  \to
  ( \oplus_\gamma (A(\gamma)^\ast)_{Aut(\gamma)})_{S_4}.
\end{equation}
The first map is given by precomposing a linear functional with the map $a \mapsto |S_4|^{-1}\sum_{\sigma \in S_4} \sigma a$ and then taking $S_4$ coinvariants of the result.  The second map is induced by precomposing a linear functional with the projection to $Aut(\gamma)$ coinvariants, and then taking $Aut(\gamma)$ coinvariants of the result.

Fixing the basis of $\op{Q}^\ast$ dual to the basis given in the above matrices, and composing with these natural isomorphisms gives a basis for $\left[ \DM(1,4)^{\bullet,-4}\right]_{S_4}$ for which the Feynman transform differential is given by the transpose of the matrices above.  In the middle dimension, it will be convenient to adopt the following notation:

\begin{definition}\label{lambdadef}  For $1\leq i \leq 14$, let $\lambda_i\in \left[ \DM(1,4)^{-1,-4}\right]_{S_4}$ be the image of the dual vector of the heading of the $i^{th}$ column in the above matrices under the natural isomorphisms in Equation $\ref{naturalisos}$.
\end{definition}

\subsection{A $(1,4)$-level-wise quasi-isomorphism.}

We now extend the level-wise quasi-isomorphism $f^0$ of Corollary $\ref{13qi}$ from the $(1,3)$-truncation to the $(1,4)$-truncation.  The result will not be a quasi-isomorphism of $\mathfrak{K}$ modular operads (lest Corollary $\ref{notformal}$ be violated), but will be the first map in an $\infty$-quasi-isomorphism of such.  For this construction we now fix an auxiliary scalar $\mathsf{e}\in\mathbb{Q}\setminus 0$.

Define $f_{(1,4)}$ as a sum of two maps:
\begin{equation*}
f_{(1,4)}=f^0_{(1,4)}+f^\omega \colon \FTGK(H_\ast(\overline{\op{M}}))(1,4) \to\mathfrak{p}H_\ast(\op{M}_{1,4}),  
\end{equation*} 
The first is the biarity $(1,4)$ component of the modular operad map $f^0$ (Equation $\ref{zeta}$) induced by sending corollas labeled by dual fundamental classes to point classes in $H_0$.  The second map $f^\omega$ is defined to be zero except on the bidegree $(-1,-4)$ component of this complex.  Using the basis $\{\lambda_i \ | \ 1\leq i \leq 14\}$ from Definition $\ref{lambdadef}$ we define:
\begin{equation*}
\bar{f}^\omega\colon\left[\FTGK(H_\ast(\overline{\op{M}}))(1,4)^{-1,-4}\right]_{S_4} \to \Sigma^{-8}H_3(\op{M}_{1,4}) 
\end{equation*}
by $\lambda_i \mapsto t_i\omega$, where $t_i$ are recorded in the following table:
\begin{equation}\label{t}
	\begin{tabular}{c|c|c|c|c|c|c|c|c|c|c|c|c|c}
		$t_1$ & $t_2$ & $t_3$ & $t_4$ & $t_5$ & $t_6$ & $t_7$ & $t_8$ & $t_9$ & $t_{10}$ & $t_{11}$ & $t_{12}$ & $t_{13}$ & $t_{14}$ \\ \hline 
		$-3\mathsf{e}$ & $\mathsf{e}$ & $-\mathsf{e}/2$ & $-\mathsf{e}/2$   & $-3\mathsf{e}$ & $2\mathsf{e}$  & $\mathsf{e}$ & $0$ & $2\mathsf{e}$  & $-6\mathsf{e}$ & $-\mathsf{e}/2$  & $-\mathsf{e}/2$   & $0$ & $2\mathsf{e}$
	\end{tabular}
\end{equation}
We then define $f^\omega$ to be $\bar{f}^\omega$ precomposed with projection to coinvariants.  Since $\omega$ is $S_4$-invariant, $f^\omega$ is $S_4$-equivariant.

\begin{lemma}\label{normlem}
	Let $\epsilon =\sum_{i=1}^{14}r_i\lambda_i \in \left[\FTGK(H_\ast(\overline{\op{M}}))(1,4)^{-1,-4}\right]_{S_4}$ and define
	\begin{equation*}
	|\epsilon| := (-6r_1 +2r_2-r_3-r_4+4r_6-6r_{10}+2r_{14})\mathsf{e}.
	\end{equation*}
	\begin{enumerate}
		\item If $\epsilon$ is a cycle then $\bar{f}^\omega(\epsilon)= |\epsilon|\omega$,
		\item $\epsilon$ is a boundary if and only if $\epsilon$ is a cycle and $|\epsilon|=0$.
	\end{enumerate}
\end{lemma}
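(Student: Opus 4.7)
The plan is to derive part (1) by reading off the cycle conditions from the matrix for $\op{Q}_6\to\op{Q}_5$ and substituting them into the definition of $\bar{f}^\omega$. That matrix shows $d_{\op{Q}}(\eta_\bullet) = \delta_\bullet^{\text{II}} - \delta_\bullet^{\text{I}}$ for each of the six non-vanishing $\eta_\bullet$, so dually $\epsilon=\sum_i r_i\lambda_i$ is a cycle precisely when $r_5=r_1$, $r_7=r_2$, $r_{11}=r_3$, $r_{12}=r_4$, $r_9=r_6$, and $r_{13}=r_8$. Substituting the tabulated values of $t_i$ into $\bar{f}^\omega(\epsilon)=\sum r_i t_i\omega$ and applying these identifications pairs up the affected terms to produce exactly $|\epsilon|\omega$.

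For the forward implication of part (2), every boundary is a cycle, so by part (1) it suffices to verify that $|d_{\FTGK}(\delta_\bullet^\ast)|=0$ for each of the seven basis elements $\delta_\bullet$ of $\op{Q}_4$. The relevant dual vectors are read off as rows of the displayed matrix for $(\op{Q}_5\to\op{Q}_4)^{\mathsf{T}}$, and a direct substitution into the formula for $|\cdot|$ yields zero in all seven cases.

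The reverse implication will follow from a dimension count. The six cycle conditions involve pairwise disjoint coordinates, so the cycle space has dimension $14-6=8$. The seven boundary generators are linearly independent, since each contains some $\lambda_i$ (respectively $\lambda_{10}$, $\lambda_6$, $\lambda_1$, $\lambda_2$, $\lambda_3$, $\lambda_4$, $\lambda_8$) appearing in no other generator, so the boundary space has dimension $7$, consistent with the $1$-dimensional $V_4$ contribution to $H_{-5}(\DM(1,4))^{-1,-4}$ from Lemma \ref{hlem}. Since $\lambda_{10}$ is a cycle with $|\lambda_{10}|=-6\mathsf{e}\neq 0$, the functional $|\cdot|$ does not vanish identically on cycles, and by the forward implication its kernel within the cycle space has codimension $1$, hence coincides with the space of boundaries. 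The main obstacle is the seven-case substitution in the forward implication: it is exactly where the precise tuning of the coefficients $t_i$ in Equation $\ref{t}$ is required.
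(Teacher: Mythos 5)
Your proof is correct. For part (1) and the forward half of part (2) it coincides with the paper's argument: the cycle conditions $r_1=r_5$, $r_2=r_7$, $r_3=r_{11}$, $r_4=r_{12}$, $r_6=r_9$, $r_8=r_{13}$ are read off from the matrix for $\op{Q}_6\to\op{Q}_5$, part (1) follows by pairing the $t_i$, and the forward half of (2) is the seven substitutions showing $|d_{\FTGK}(\delta_\bullet^\ast)|=0$. Where you genuinely diverge is the converse of part (2): the paper exhibits an explicit bounding chain, namely for a cycle $\epsilon=\sum r_i\lambda_i$ with $|\epsilon|=0$ it writes $\epsilon = d_{\FTGK}\bigl(r_1\delta_{3,4}^\ast + r_2\delta_{2,4}^\ast + r_3\delta_{0,4}^\ast + r_4\delta_{0,3}^\ast + r_6\delta_{2,3}^\ast + r_8\delta_{0,2}^\ast + \tfrac{r_{10}}{2}\delta_{2,2}^\ast\bigr)$, whereas you argue by a dimension count. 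Your count is sound: the six cycle conditions involve disjoint coordinate pairs, so the cycle space is $8$-dimensional; the seven boundary generators are independent because each owns a pivot column ($\lambda_{10},\lambda_6,\lambda_1,\lambda_2,\lambda_3,\lambda_4,\lambda_8$ respectively); and $\lambda_{10}$ is a cycle with $|\lambda_{10}|=-6\mathsf{e}\neq 0$, so the kernel of $|\cdot|$ on cycles has dimension exactly $7$ and must equal the $7$-dimensional boundary space it contains. The trade-off is that your route substitutes two small rank verifications for the paper's single guessed-and-checked preimage; the paper's version is more constructive (it hands you the bounding chain), while yours is arguably more robust to arithmetic slips since it never needs the precise preimage coefficients. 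Either way the substance of the lemma --- the tuning of the $t_i$ so that $\bar{f}^\omega$ factors through $|\cdot|$ on cycles --- is handled identically.
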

\begin{proof} 
	Using the matrix representations of the Feynman transform differential above, this is easily verified.  First we see that such an $\epsilon$ is a cycle if and only if $r_1=r_5$, $r_2=r_7$, $r_3=r_{11}$, $r_4 = r_{12}$, $r_6=r_9$ and $r_8=r_{13}$, from which the first claim follows. For the second claim, if $\epsilon$ is a boundary it can be written as a linear combination of the seven vectors $d_{\FTGK}(\delta_\bullet^\ast)$, each of which satisfies $|d_{\FTGK}(\delta_\bullet^\ast)|=0$.  Conversely, if $\epsilon = \sum r_i\lambda_i$ is a cycle and $|\epsilon|=0$ then
$
	\epsilon = d_{\FTGK}(r_1 \delta_{3,4}^\ast + r_2\delta_{2,4}^\ast + r_3\delta_{0,4}^\ast + r_4\delta_{0,3}^\ast + r_6\delta_{2,3}^\ast +r_8\delta_{0,2}^\ast + \frac{r_{10}}{2}\delta_{2,2}^\ast).
$
\end{proof}

\begin{corollary}\label{dgcor} $f_{(1,4)}$ so defined is a dg map.
\end{corollary}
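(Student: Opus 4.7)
The plan is to reduce the dg-property of $f_{(1,4)}$ to the boundary-vanishing content of Lemma \ref{normlem}. Since the target $\mathfrak{p}H_\ast(\op{M})$ carries zero internal differential, being dg is equivalent to the single identity $f_{(1,4)}\circ d_{\FTGK}=0$. I would decompose $f_{(1,4)}=f^0_{(1,4)}+f^\omega$ and handle the two summands separately. The summand $f^0$ is, by construction (Equation \ref{zeta}), a strict morphism of $\mathfrak{K}$-modular operads; it is in particular dg, which combined with the vanishing target differential yields $f^0_{(1,4)}\circ d_{\FTGK}=0$ for free. The whole problem thus reduces to verifying $f^\omega\circ d_{\FTGK}=0$.

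Next I would carry out a bidegree count. The map $f^\omega$ is supported only in bidegree $(-1,-4)$, and $d_{\FTGK}$ preserves internal degree $-s$ while decreasing external degree by one (it is edge insertion, taking $-r$ to $-(r+1)$). Consequently $f^\omega\circ d_{\FTGK}$ can only be nonzero when applied to elements of bidegree $(0,-4)$, i.e.\ to corollas labeled by elements of $H_4(\overline{\op{M}}_{1,4})^\ast$. For such an $x$, the element $d_{\FTGK}(x)$ projects under the $S_4$-coinvariants quotient to a boundary in the complex $\bigl[\FTGK(H_\ast(\overline{\op{M}}))(1,4)^{-1,-4}\bigr]_{S_4}$, and it is precisely $\bar{f}^\omega$ of this boundary that must be shown to vanish.

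Finally I would invoke Lemma \ref{normlem}: part (2) says every such boundary $\epsilon$ is a cycle with $|\epsilon|=0$, and part (1) then yields $\bar{f}^\omega(\epsilon)=|\epsilon|\omega=0$, closing the verification. The substantive content here — the choice of scalars $t_i$ making the functional $|\cdot|$ vanish on the codimension-one subspace of boundaries inside the space of cycles — has already been folded into Lemma \ref{normlem}. Consequently the present corollary reduces to the bookkeeping above on the supports of $f^\omega$ and $d_{\FTGK}$; the only step that could have been an obstacle was the compatibility of $\bar{f}^\omega$ with boundaries, and that was arranged by definition of the table \eqref{t}.
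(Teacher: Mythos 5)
Your proof is correct and follows essentially the same route as the paper: reduce to $f_{(1,4)}\circ d_{\FTGK}=0$, kill the $f^0$ summand because it is part of a strict dg morphism, and kill the $f^\omega$ summand via Lemma \ref{normlem}. The only difference is that you spell out the bidegree bookkeeping locating the relevant component in $(0,-4)\to(-1,-4)$ and the passage to $S_4$-coinvariants, which the paper leaves implicit in its citation of Lemma \ref{normlem}.
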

\begin{proof}  Since there is no differential in the target, this amounts to checking that $f_{(1,4)}\circ d_{\FTGK}=0$.  Since the maps $f^0$ form a dg $\mathfrak{K}$-modular operad map, we know that $f^0_{(1,4)}\circ d_{\FTGK} = 0$. We also know that $f^\omega\circ d_{\FTGK} = 0$ by Lemma $\ref{normlem}$.  Thus $f_{(1,4)}\circ d_{\FTGK} =(f_{(1,4)}^0+f^\omega)\circ d_{\FTGK}=0$.	
\end{proof}

\begin{lemma} $f_{(1,4)}$ is a quasi-isomorphism.
\end{lemma}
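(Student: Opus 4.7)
The plan is to show that $f_{(1,4)}$ induces an isomorphism on homology in each of the four bidegrees where Lemma $\ref{hlem}$ identifies non-trivial homology. Since the target $\mathfrak{p}H_\ast(\op{M}_{1,4})$ has vanishing internal differential and, by Theorem $\ref{thm1}$ together with the choice of isomorphism in Equation $\ref{zeta}$, its $S_4$-module dimensions agree with those of $H_\ast(\DM(1,4))$ in each such bidegree, it will suffice to verify surjectivity of the induced map bidegree by bidegree.

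For the three bidegrees $(-4,0)$, $(-3,-2)$, and $(0,-8)$, each satisfies $2r+s = 6g+2n-6 = 8$, so by the degree count from the proof of Lemma $\ref{deglem}$ every representing cycle is a linear combination of labeled graphs whose vertices all carry dual fundamental classes. Since $f^\omega$ is supported only in bidegree $(-1,-4)$, here $f_{(1,4)}$ restricts to $f^0_{(1,4)}$, and I would invoke the argument from the proof of Corollary $\ref{13qi}$: $f^0$ sends such a cycle to the operadic composition of the associated point classes in $\mathfrak{p}H_\ast(\op{M}_{1,4})$, which realizes the non-trivial homology class by the defining property of the isomorphism fixed in Equation $\ref{zeta}$.

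For the remaining bidegree $(-1,-4)$, the homology $V_4$ is one-dimensional and is the trivial $S_4$-representation, so the induced map can be checked after passage to $S_4$-coinvariants. The decisive observation is that $f^0_{(1,4)}$ vanishes identically on bidegree $(-1,-4)$: viewed as the $\mathfrak{K}$-modular operad morphism sending dual fundamental class corollas to point classes and all other generating corollas to zero, $f^0$ applied to a labeled graph is non-zero only if every vertex label is a dual fundamental class, which forces $2r+s=8$ and so excludes bidegree $(-1,-4)$ (where $2r+s=6$). Thus $f_{(1,4)}=f^\omega$ here, and Lemma $\ref{normlem}$ completes the argument: for any non-boundary cycle $\epsilon$ in $[\DM(1,4)^{-1,-4}]_{S_4}$ one has $|\epsilon|\neq 0$, so $\bar{f}^\omega(\epsilon)=|\epsilon|\omega\neq 0$ (using $\mathsf{e}\neq 0$), certifying surjectivity onto the generator $\omega$.

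The most delicate step is the vanishing of $f^0_{(1,4)}$ on bidegree $(-1,-4)$, which rests on interpreting $f^0$ as the free zero-extension of its specified behavior on dual fundamental classes. This extension is automatically a chain map because the Feynman transform differential applied to the dual of a fundamental class of $\overline{\op{M}}_{g,n}$ is supported on two-vertex graphs whose label degrees sum to $6g+2n-6$, which by the same degree count cannot both be dual fundamental classes, so no further input is required to pin down $f^0$ in the bidegrees of interest.
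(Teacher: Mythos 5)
Your proposal is correct and takes essentially the same route as the paper: it splits $f_{(1,4)}$ into $f^0$ and $f^\omega$ using the degree count $2r+s=8$ for graphs labeled by dual fundamental classes, handles those bidegrees via the argument of Corollary \ref{13qi}, and handles bidegree $(-1,-4)$ via Lemmas \ref{hlem} and \ref{normlem}. The paper packages this as the splitting $\DM(1,4)^{s\neq 4}\oplus\DM(1,4)^{s=4}$ with target $\op{P}\oplus\mathbb{Q}\omega$ rather than checking bidegree by bidegree, but the content is the same.
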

\begin{proof}
	With respect to the direct sum decomposition
	\begin{equation*}
	\DM(1,4)\cong	\DM(1,4)^{s\neq 4}\oplus \DM(1,4)^{s=4},
	\end{equation*}
	the map $f_{(1,4)}$ may be written $f^0\oplus f^\omega$.  Indeed $f^0$ restricted to $\DM(1,4)^{s=4}$ could only not vanish on graphs labeled by dual fundamental classes, and so could only not vanish on $\DM(1,4)^{-2,-4}$.  But $f^0$ restricted to this component lands in $\Sigma^{-8}H_{2}(\op{M}_{1,4})=0$.

	Writing $\mathfrak{p}H_\ast(\op{M}_{1,4}) = \op{P}\oplus \mathbb{Q} \omega $, it suffices to show that the restrictions
	\begin{equation*}
	f^0\colon \DM(1,4)^{\neq 4} \to \op{P}\text{ and } 
	f^\omega\colon \DM(1,4)^{4} \to \mathbb{Q} \omega  
	\end{equation*}
	are both quasi-isomorphisms.  For the former, this follows from Lemma $\ref{hlem}$, since when $s\neq 4$ all homology classes are represented by graphs labeled by dual fundamental classes at each vertex. For the latter, combining Lemma $\ref{hlem}$ with Lemma $\ref{normlem}$ shows the induced map on homology $\mathbb{Q}\to \mathbb{Q}$ is non-zero, and hence is an isomorphism.
\end{proof}

The map $f_{(1,4)}$ will correspond to the corolla of type $(1,4)$ in the $\infty$-morphism $f$ constructed below.  To uniformize our notation we define $f_{(g,n)} := f^0_{g,n}$ for $(g,n)<(1,4)$.

\section{Construction of $(1,4)$-truncated $\DM\stackrel{\sim}\rightsquigarrow \mathfrak{p}H_\ast(\op{M})$}
In this section we give a complete and explicit description of the $(1,4)$-truncated weak $\mathfrak{K}$-modular operad structures on $\mathfrak{p}H_\ast(\op{M})$ whose homotopy type coincides with $\DM$.  We then construct an explicit $\infty$-quasi-isomorphism between them extending the maps $f_{(g,n)}$ above.

\subsection{Higher Operations}

\begin{definition}\label{opsdef} Fix $\mathsf{e},\mathsf{w} \in \mathbb{Q}$ with $\mathsf{e}\neq 0$.  Let $\mathfrak{p}h_\ast^{\mathsf{e,w}}(\op{M})$ be the (1,4)-truncated weak $\mathfrak{K}$-modular operad characterized by the following properties:
\begin{itemize}
	\item Its underlying dg $\mathfrak{K}$-modular operad is the (1,4)-truncation of $\mathfrak{p}H_\ast(\op{M})$.
	\item Its only non-zero higher Massey products are given by $\gamma \in \Gamma(1,4)_2$ and supported on $\mathfrak{p}H_0(\op{M})$.
	\item Its operation indexed by a leaf labeling of $\delta_{\bullet}$ is $S_4$ invariant and defined to be $\mu_{\bullet}(1) = c_{\bullet}(\mathsf{e},\mathsf{w})\omega$, where $\omega$ is as defined in Definition $\ref{omegadef}$ and where the coefficients $c_{\bullet}(\mathsf{e,w})$ are defined in the following table: 
\end{itemize} 
\begin{center}
	\begin{tabular}{c|c|c|c|c|c|c|c|c|c}
	$\mu_{\bullet}$ &	$\mu_{2,2}$ & $\mu_{2,3}$ & $\mu_{3,4}$ & $\mu_{2,4}$ & $\mu_{0,4}$& $\mu_{0,3}$ & $\mu_{0,2}$ & 	$\mu_{\alpha}$ & $\mu_{\beta}$ \\ \hline
$c_{\bullet}(\mathsf{e,w})$ & $12\mathsf{e}$	 & $-4\mathsf{e}$ & $6\mathsf{e}$	 & $-2\mathsf{e}$	 & $\mathsf{e}+6\mathsf{w}$	 & $\mathsf{e}+3\mathsf{w}$ & $\mathsf{w}$	& $-3\mathsf{w}$  &	$-2\mathsf{e}-4\mathsf{w}$ \\
	\end{tabular}   		
\end{center}

\end{definition}
We often drop the word ``truncated'' when referring to $\mathfrak{p}h_\ast^{\mathsf{e,w}}(\op{M})$; recall it may be thought of as a non-truncated $\mathfrak{K}$-modular operad by extension by $0$.

\begin{lemma} As defined above, $\mathfrak{p}h_\ast^{\mathsf{e,w}}(\op{M})$ is a weak $\mathfrak{K}$-twisted modular operad.
\end{lemma}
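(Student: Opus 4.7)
The plan is to verify the master equation
\[
d(\mu_\gamma) = \sum_N \mu_{\gamma/N} \circ_N \mu_{\hat N}
\]
of a weak $\mathfrak{K}$-modular operad for every modular graph $\gamma$ in the $(1,4)$-truncation. Since the internal differential on $\mathfrak{p}H_\ast(\op{M})$ vanishes, the left hand side is identically zero, so the task is to verify that the right hand side vanishes for every $\gamma$. I would carry this out by dualizing and working on the Feynman transform: specifying a weak $\mathfrak{K}$-modular operad structure on a stable $\mathbb{S}$-module with trivial differential is equivalent to specifying a square zero degree $-1$ derivation on the free modular operad generated by its linear dual, and the master equation is precisely the condition $d^2 = 0$.

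In our setting the derivation decomposes as $d = d_{\FTGK} + d_\mathrm{M}$, where $d_{\FTGK}$ is the standard Feynman transform differential for the strict $\mathfrak{K}$-modular operad structure on $\mathfrak{p}H_\ast(\op{M})$ (arising from the single edge generating operations), and $d_\mathrm{M}$ is a new component concentrated on the $(1,4)$-biarity corollas, arising as the sum of the linear duals of the nine generating operations $\mu_{\delta_\bullet}$. The piece $d_{\FTGK}^2 = 0$ holds automatically from the strictness of the underlying $\mathfrak{K}$-modular operad. The piece $d_\mathrm{M}^2 = 0$ is also automatic: $d_\mathrm{M}$ can apply at most once to any element, since after one application the result is supported on labeled two-edge graphs, on whose labels $d_\mathrm{M}$ vanishes. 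The substantive content is the cross term $d_{\FTGK} \circ d_\mathrm{M} + d_\mathrm{M} \circ d_{\FTGK} = 0$. Restricted to the dual $\omega^\ast$ of the distinguished generator $\omega$ of Definition \ref{omegadef}, this amounts to the identity $d_{\FTGK}\!\left(\sum_\bullet c_\bullet(\mathsf{e},\mathsf{w})\,\delta_\bullet^\ast\right) = 0$ in $\FTGK(\mathfrak{p}H_\ast(\op{M}))(1,4)$, and under the quasi-isomorphism $f^0$ of Equation $\ref{zeta}$ this is equivalent to the boundary identity
\[
\sum_\bullet c_\bullet(\mathsf{e},\mathsf{w})\,\delta_\bullet = 0 \quad \text{in } H_4(\overline{\op{M}}_{1,4}).
\]

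With the coefficients from Definition \ref{opsdef}, the left hand side expands as $\mathsf{e}$ times Getzler's elliptic relation plus $\mathsf{w}$ times the WDVV relation, both established in Lemma \ref{Qcomplex}, parts (3b) and (3a) respectively. The hard part will be the book-keeping required to verify that the cross term, once unraveled through the duality, reproduces precisely the combination $\sum_\bullet c_\bullet(\mathsf{e},\mathsf{w})\,\delta_\bullet$: this relies on tracking $\mathfrak{K}$-twist signs and automorphism coinvariants, but the requisite data is already recorded in the explicit matrix presentations of $d_{\FTGK}$ displayed immediately after Lemma \ref{Qcomplex}, so the translation is a direct unpacking rather than a new calculation. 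For graphs $\gamma$ outside biarity $(1,4)$, the cross term is identically zero because no Massey product is supported on any subgraph or quotient arising, and the master equation collapses to $d_{\FTGK}^2 = 0$ there.
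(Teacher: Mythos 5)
Your reformulation of the lemma as the vanishing of $d^2$ for the derivation $d=d_{\FTGK}+d_{\mathrm{M}}$ on the free modular operad is legitimate, and your treatment of $d_{\FTGK}^2$ and $d_{\mathrm{M}}^2$ is fine. The problem is the cross term, which you call ``the substantive content'' and reduce to Getzler's relation $\sum_\bullet c_\bullet(\mathsf{e},\mathsf{w})\delta_\bullet=0$ in $H_4(\overline{\op{M}}_{1,4})$. That is a misdiagnosis on two counts. First, the cross term vanishes for elementary degree reasons, with no geometric input: $d_{\mathrm{M}}(\omega^\ast)$ is a combination of two-edge graphs every vertex of which is labeled by the dual of a point class, and the strict ($\mathfrak{K}$-twisted, degree $-1$) contractions of $\mathfrak{p}H_\ast(\op{M})$ carry $H_a\otimes H_b$ to $H_{a+b+1}$, hence never land in $H_0$; dually, $d_{\FTGK}$ annihilates any graph all of whose labels are $H_0$-duals, so $d_{\FTGK}d_{\mathrm{M}}(\omega^\ast)=0$ term by term (and $d_{\mathrm{M}}d_{\FTGK}(\omega^\ast)=0$ because a one-edge expansion of a $(1,4)$-corolla has no vertex of type $(1,4)$). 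Equivalently, in the undualized master equation every composite $\mu_{\gamma/N}\circ_N\mu_{\hat N}$ in which $\mu_{\gamma/N}$ is one of the nine Massey products vanishes, since the output of the one-edge contraction $\mu_{\hat N}$ lies in $\mathfrak{p}H_{\geq 1}$ while the Massey products are supported on $\mathfrak{p}H_0$. This is exactly why the paper's proof is one line (``the condition is vacuous''): the lemma holds for an \emph{arbitrary} choice of the nine coefficients, and nothing in it constrains $c_\bullet(\mathsf{e},\mathsf{w})$.

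Second, your asserted equivalence between ``$d_{\FTGK}\bigl(\sum_\bullet c_\bullet\delta_\bullet^\ast\bigr)=0$'' and ``$\sum_\bullet c_\bullet\delta_\bullet=0$ in $H_4(\overline{\op{M}}_{1,4})$'' conflates two different complexes: the graphs appearing in $d_{\mathrm{M}}(\omega^\ast)$ live in $\FT(\mathfrak{p}H_\ast(\op{M}))$ with point-class labels, whereas the boundary classes $\delta_\bullet$ and Getzler's relation live in $H_4(\overline{\op{M}}_{1,4})=\op{Q}_4$ inside $\DM$; the map $f^0$ of Equation $\ref{zeta}$ does not translate the one statement into the other. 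Getzler's relation and the WDVV relation of Lemma $\ref{Qcomplex}$(3) are genuinely needed, but for a different step: they are what make $f^\omega\circ d_{\FTGK}=0$, i.e., they are the content of Lemma $\ref{normlem}$ and Corollary $\ref{dgcor}$, which guarantee that the particular coefficients of Definition $\ref{opsdef}$ admit an $\infty$-quasi-isomorphism to $\DM$. If your reading were correct, those coefficients would already be forced at the stage of the present lemma and the subsequent construction of $f$ would be redundant.
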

\begin{proof}
	We need only check the differential condition.  But having truncated above (1,4) and the fact that the internal differential is $0$ make the condition vacuous.  
\end{proof}

\subsection{Uniqueness}
\begin{lemma}\label{unique}  For any pairs of scalars $(\mathsf{e,w})$ and $(\mathsf{e}^\prime,\mathsf{w}^\prime)$ with $\mathsf{e}\mathsf{e}^\prime \neq 0$, there exists an $\infty$-isomoprhism of weak $\mathfrak{K}$-modular operads 
	\begin{equation*}
	g\colon \mathfrak{p}h_\ast^{\mathsf{e},\mathsf{w}}(\op{M}) \stackrel{\cong}\rightsquigarrow  \mathfrak{p}h_\ast^{\mathsf{e}^\prime,\mathsf{w}^\prime}(\op{M})
	\end{equation*}
which we construct explicitly.
\end{lemma}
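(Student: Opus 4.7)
The plan is to construct $g$ componentwise by edge-count of the indexing graph, using that the two weak $\mathfrak{K}$-modular operads $\mathfrak{p}h^{\mathsf{e},\mathsf{w}}_\ast(\op{M})$ and $\mathfrak{p}h^{\mathsf{e}',\mathsf{w}'}_\ast(\op{M})$ share the same underlying strict $\mathfrak{K}$-modular operad and differ only in the nine two-edge Massey products of type $(1,4)$.

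On corollas, set $g_{(g,n)}=\mathrm{id}$ for every $(g,n)\leq (1,3)$, and let $g_{(1,4)}$ act as the identity on the complement of $\omega$ and as multiplication by $s:=\mathsf{e}'/\mathsf{e}$ on the line $\langle\omega\rangle$. For every graph $\gamma$ with one or more edges, set $g_\gamma=0$ except on the four one-edge graphs $\eta_i$ of type $(1,4)$ shown in Figure \ref{fig:141graphs}, where a map $g_{\eta_i}\colon \op{Q}(\eta_i)_{\mathrm{Aut}(\eta_i)}\to \mathfrak{p}H_\ast(\op{M}_{1,4})$ is to be specified. With these prescriptions, the $\infty$-morphism condition \eqref{dcond0} for any $\gamma$ with zero, one, or $\geq 3$ edges holds automatically: the corolla and one-edge cases follow because the strict $\mathfrak{K}$-modular operad structures on source and target coincide, while the $\geq 3$-edge cases follow because every higher Massey product in either operad both is concentrated in two edges and is supported on $\mathfrak{p}H_0$ (Definition \ref{opsdef}), so every term in both sums of \eqref{dcond0} involves a vanishing factor.

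The only remaining obligations are the nine equations indexed by the two-edge $(1,4)$ graphs $\delta_\bullet$, each of the form
\begin{equation*}
\sum_{e \in E(\delta_\bullet)} g_{\delta_\bullet/e}\bigl(\nu^{\op{Q}}_e(1)\bigr) \;=\; \bigl(c_\bullet(\mathsf{e}',\mathsf{w}') - s\,c_\bullet(\mathsf{e},\mathsf{w})\bigr)\omega.
\end{equation*}
The rescaling $s=\mathsf{e}'/\mathsf{e}$ was chosen so that the Getzler contribution on the right cancels, leaving the right-hand side equal to $\tau\,b_\bullet\,\omega$, where $b_\bullet$ is the coefficient vector of the WDVV-type relation of Lemma \ref{Qcomplex}(3)(a) and $\tau:=\mathsf{w}'-\mathsf{e}'\mathsf{w}/\mathsf{e}$. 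Since $\sum_\bullet b_\bullet\,\delta_\bullet = 0$ in $\op{Q}_4$ is exactly the WDVV relation, this 9-tuple of scalars has an explicit one-edge preimage in $\op{Q}_5$, read off from the second matrix following Lemma \ref{Qcomplex}; its non-fundamental vertex labels are gravity-operad classes in $\mathfrak{p}H_\ast(\op{M}_{0,n})$ in the sense of \cite{Getgrav}. Under the duality \eqref{naturalisos}, this preimage determines each $g_{\eta_i}$: it vanishes outside a one-dimensional subspace spanned by such a gravity class, and sends the distinguished generator to $\tau$ times the explicit rational multiple of $\omega$ dictated by the preimage.

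The main obstacle is the book-keeping: one must match each one-edge contraction $\nu^{\op{Q}}_e(1)$ appearing on the left with the appropriate entry of the WDVV preimage, and verify that the four maps $g_{\eta_1},\ldots,g_{\eta_4}$ simultaneously solve all nine equations. This reduces to a finite linear-algebra check analogous to the one in the proof of Lemma \ref{normlem}: the fact that exactly two independent relations, Getzler and WDVV, hold in $\op{Q}_4$ precisely matches the two-parameter family $(\mathsf{e},\mathsf{w})$ of Massey structures, with the scalar $s$ absorbing the Getzler direction and the gravity-built $g_{\eta_i}$ absorbing the WDVV direction. Since each $g_{(g,n)}$ is bijective, the resulting $g$ is the desired $\infty$-isomorphism.
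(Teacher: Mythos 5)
Your reduction is correct and follows the same architecture as the paper's proof: the paper first disposes of the $\mathsf{e}$-direction by a strict rescaling isomorphism on $\langle\omega\rangle$ (you fold this into $g_{(1,4)}$ instead, which also works, though your claim that the one-edge condition is automatic "because the strict structures coincide" silently uses the additional fact that $\langle\omega\rangle$, sitting in weight $4$, is not in the image of any one-edge contraction --- otherwise the non-identity corolla map $g_{(1,4)}$ would fail to intertwine them). Your computation that the nine two-edge constraints have right-hand side $\bigl(\mathsf{w}'-\mathsf{e}'\mathsf{w}/\mathsf{e}\bigr)b_\bullet\,\omega$ with $b_\bullet$ the WDVV coefficient vector of Lemma \ref{Qcomplex}(3)(a) exactly reproduces the paper's reduction to the case $\mathsf{e}=\mathsf{e}'$.

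The gap is at the decisive step. The maps $g_{\eta_i}$ must be defined on $\mathfrak{p}H_\ast(\op{M})(\eta_i)_{Aut(\eta_i)}$, and the inputs $\nu_e(1)$ appearing in your nine equations are one-edge compositions of point classes in the \emph{gravity} operad structure on $H_\ast(\op{M}_{0,n})$ --- for the loop graph $\mathsf{D}$ these are $25$ spanning vectors of the $9$-dimensional space $H_1(\op{M}_{0,6})$, subject to the relations of \cite[Theorem 4.5]{Getgrav}. Your proposal to read the solution off the matrix for $\op{Q}_5\to\op{Q}_4$ points at the wrong space: $\op{Q}_5$ is built from $H_4(\overline{\op{M}})(\gamma)$ on \emph{compactified} moduli, where the relations are WDVV relations among boundary classes, not the gravity relations governing the domain of $g_{\eta_i}$. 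Consequently the solvability of the system $\sum_e g_{\delta_\bullet/e}(\nu_e(1))=\tau b_\bullet\omega$ is not established by your dimension-count heuristic ("two relations match two parameters"): one must exhibit values on a spanning set of $H_1(\op{M}_{0,6})$, check they are compatible with all the gravity relations and with descent to $S_{\{5,6\}}$-coinvariants, and then verify the five nontrivial equations (the four with $b_\bullet=0$ are handled by taking $g_\mathsf{A}=g_\mathsf{B}=g_\mathsf{C}=0$). This is precisely what the paper's proof does, via the explicit table of values on Getzler's classes $X_{i,j}$, $Y_{i,j}$, $Z_i$; since the lemma asserts an \emph{explicit} construction, this unexecuted "book-keeping" is the actual content, and its feasibility rests on a compatibility between the WDVV coefficients and the gravity relations that you have not demonstrated.
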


\begin{proof}  We first observe that the $\mathfrak{K}$-modular operads associated to the pairs $(\mathsf{e},0)$ and $(\mathsf{e}^\prime,0)$ are actually related by a strict isomorphism which takes $\omega\mapsto \mathsf{e} \omega/\mathsf{e}^\prime$ and which is the identity on the complementary summand.  By transitivity, it is thus sufficient to construct an $\infty$-morphism between a generic $(\mathsf{e},\mathsf{w})$ and $(\mathsf{e},\mathsf{w}^\prime)$.
	
So let us now assume that $\mathsf{e}=\mathsf{e}^\prime$.  We must define
	\begin{equation*}
g_\gamma \colon \mathfrak{p}h^{\mathsf{e},\mathsf{w}}_\ast(\op{M})(\gamma)_{Aut(\gamma)}\to \mathfrak{p}h^{\mathsf{e},\mathsf{w}^\prime}_\ast(\op{M}_{\ast_\gamma})
\end{equation*}
for each modular graph $\gamma$ of type $\ast_\gamma \leq (1,4)$.  We define $g_\gamma=0$ unless $\gamma$ is a corolla, in which case we define it to be the identity map, or if $\gamma=\mathsf{D}$ the loop with four legs (Figure $\ref{fig:141graphs}$), in which case we define $g_\gamma = g_\mathsf{D}$ as follows.
	
We identify $\mathfrak{p}h^{\mathsf{e},\mathsf{w}}_\ast(\op{M})(\mathsf{D})_{Aut(\mathsf{D})} \cong \Sigma^{-6}  H_\ast(\op{M}_{0,6})_{S_2}$, where $S_2$ acts by permuting $5$ and $6$ by convention.  Then $g_\mathsf{D}$ is equivalent to a family of linear maps
$H_n(\op{M}_{0,6})_{S_2} \to \Sigma^{-8} H_{n+2}(\op{M}_{1,4})$. 
We define these maps to be $0$ except when $n=1$, in which case we define
\begin{equation*}
g_\mathsf{D}\colon H_1(\op{M}_{0,6})_{S_2} \to H_3(\op{M}_{1,4})
\end{equation*}
(omitting suspensions by abuse of notation) as follows.

The 9 dimensional $S_6$-module $H_1(\op{M}_{0,6})$ is spanned by cyclic operadic compositions of $H_0(\op{M}_{0,i})$ for $i=3,4$.  These compositions can be represented by one edged trees with 6 leaves labeled $1\cdc 6$.  We view such a tree as rooted, with root taking label $6$; call the root vertex $a$ and the other vertex we call $b$.  There are $25$ such trees, we denote them by:
\begin{itemize}
	\item $X_{i,j}$ -- 
	$a$ is adjacent to legs labeled $\{i,j,6\}$, for $1\leq i< j \leq 5$.
	\item $Y_{i,j}$ --
		$b$ is adjacent to legs labeled $\{i,j\}$, for $1\leq i< j \leq 5$.
	\item $Z_{i}$ -- 
	$a$ is adjacent to legs labeled $\{i,6\}$, for $1\leq i \leq 5$.
\end{itemize}

We will define $g_\mathsf{D}$ to be the lift to $S_2\cong S_{\{5,6\}}$ coinvariants of the map sending the spanning set above to the following multiples of $\omega\in H_3(\op{M}_{1,4})$:
\begin{center}
	\begin{tabular}{r|c|c|c|c|c|c}
		Vector $\in H_1(\op{M}_{0,6})$ & $X_{i,5}$ & $X_{i,j <5}$   & $Y_{i,5}$   & $Y_{i,j<5}$   & $Z_5$ & $Z_{i<5}$                        \\ \hline
		Coefficient of $\omega$	& $3(\mathsf{w} - \mathsf{w}^\prime)$ & $2(\mathsf{w}^\prime - \mathsf{w})$ & $ 3(\mathsf{w}^\prime - \mathsf{w})/2$ & $(\mathsf{w} - \mathsf{w}^\prime)$ & $6(\mathsf{w} - \mathsf{w}^\prime)$ & $3(\mathsf{w}^\prime - \mathsf{w})/2$
	\end{tabular}
\end{center}

To check $g_\mathsf{D}$ is well defined we must verify that relations between compositions vanish.  These relations were derived and explicitly presented in \cite[Theorem 4.5]{Getgrav}. They are:
\begin{equation*}
 \sum_{1\leq i<j\leq 5} Y_{i,j} =0 \ \text{ and } \
 Z_i = Y_{j,k} + Y_{j,l} + Y_{j,m} + Y_{k,l} + Y_{l,m} + Y_{k,m}
 \ \text{ and } \ 
  X_{i,j} = Y_{k,l} + Y_{l,m} + Y_{k,m},
\end{equation*}
where $\{i,j,k,l,m\} = \{1,2,3,4,5\}$. 
 This verification is immediate. 
The fact that this map is $S_4$-invariant is also immediate.  
The fact that this map lifts to $S_{\{5,6\}}$-coinvariants is seen by observing $(56) Y_{i,5} = Z_i$, $(56) X_{i,j<5} = X_{l,k<5}$ and $(56)$ fixes $X_{i,5}$,  $Y_{i,j<5}$ and $Z_5$.

This completes the definition of the putative $\infty$-isomorphism $g$.  We now verify that it is in fact an $\infty$-morphism.  This amounts to checking the differential condition given in Equation $\ref{dcond0}$, which in this case reads:
\begin{equation}\label{dcond}
0 = \partial (g_\gamma)   = \sum_{N \text{ on } \gamma} (g_{\gamma/N}\circ_N \mu_N ) - (\sum_{V(\gamma) = \sqcup V(N_i)} \mu^\prime_{\gamma/\cup N_i} \circ \tensor g_{N_i})
\end{equation}
for each modular graph $\gamma$.  Here we've written $\mu$ (resp. $\mu^\prime$) for the operations on the source (resp. target).  They're degree -1 maps running:
	\begin{equation*}
	\mu_\gamma, \mu^\prime_\gamma \colon \mathfrak{p}H_\ast(\op{M})(\gamma)_{Aut(\gamma)}\to \mathfrak{p}H_\ast(\op{M}_{\ast_\gamma}).
	\end{equation*}
They coincide (commute with the identity) if $\gamma$ has one edge, but not if it has two.  

We proceed to check Equation $\ref{dcond}$.  First suppose $\gamma$ has one edge.   
	The only non-zero terms in the differential are $id\circ \mu$ and $\mu^\prime \circ id$, which cancel.  Any other term, i.e. a term involving $g_\mathsf{D}$ would also necessarily involve the internal differential which is $0$.
	
	When $\gamma$ has three or more edges, each term in the differential vanishes. 
	Indeed, given such a $\gamma$, for $\mu_N$ to not vanish would require either that $N$ has one edge, in which case $g_{\gamma/N}=0$, or $N$ is of type $(1,4)$, in which case $\gamma/N$ has a vertex of type $(1,4)$ and so $g_{\gamma/N}=0$. Hence each $g_{\gamma/N}\circ_N \mu_N=0$. On the other hand, each term of the form $\mu^\prime_{\gamma/\cup N_i} \circ \tensor g_{N_i}$ is easily seen to vanish, since each $\mu^\prime_{\gamma/\cup N_i}$ vanishes anytime $N_i$ is of type $(1,4)$ or if $\gamma/\cup N_i$ has three or more edges.

So it remains to check Equation $\ref{dcond}$ for $\gamma$ having two edges.  In this case, the differential terms include $\mu^\prime_\gamma\circ id$ and $id \circ \mu_\gamma$ which don't commute in general. 
Other terms of the form $\mu^\prime_{\gamma/\sqcup N_i}\circ \tensor g_{N_i}$ are $0$ since some $N_i$ would be of type $< (1,4)$.

The remaining terms in the differential are two terms of the form $g_{\gamma/N}\circ\mu_N$, one for each nest $N$ on $\gamma$.  By definition of $g$, such a term can only be non-zero if $\gamma/N = \mathsf{D}$. 
By inspection of Figure $\ref{fig:14graphs}$, this happens precisely in the 5 out of 9 cases in which $\mu^\prime_\gamma\circ id- id \circ \mu_\gamma  \neq 0$.
It remains to verify, in these five cases, that $\mu^\prime_\gamma\circ id- id \circ \mu_\gamma = \sum g_\mathsf{D}\circ\mu_N$.  The sum is over those nests with $\gamma/N =\mathsf{D}$; the sum has two terms when $\gamma$ has parallel edges and one term otherwise.

This equation is verified in the following table.   Here $\gamma$ is leg labeled, so the stable graph $\hat{N}$ associated to $N$ inherits a labeling of four of its legs by 1,2,3,4.   We then choose a labeling of the legs forming a loop in $\gamma/N$ by the set $\{5,6\}$.  Since $\mu_N$ lands in $S_{\{5,6\}}$-coinvariants, the result is independent of this choice and we write [-] for coinvariants.
\begin{center}
	\begin{tabular}{r|c|c|c|c|c}
		stable graph $\gamma$ & $\delta_{0,2}$ & $\delta_{0,3}$ & $\delta_{0,4}$ & $\delta_{\alpha}$ & $\delta_{\beta}$ \\ \hline
		$\mu^\prime_\gamma\circ id- id \circ \mu_\gamma$ 
		&$ (\mathsf{w}-\mathsf{w}^\prime) \omega$ &$ 3(\mathsf{w}-\mathsf{w}^\prime) \omega$ &$ 6(\mathsf{w}-\mathsf{w}^\prime) \omega$ &$ -3(\mathsf{w}-\mathsf{w}^\prime) \omega$ &$ -4(\mathsf{w}-\mathsf{w}^\prime) \omega$ \\
		$\mu_N$ & $[Y_{i,j<5}]$  &   $[X_{i,5}]$   & $[Z_5]$    &  $[Y_{i,5}],[Z_{i<5}]$  & $[X_{i,j <5}], [X_{l,k <5}]$  \\
		$\sum_Ng_\mathsf{D}\circ \mu_N$ & 1 & 3 & 6 & $-3/2-3/2$ & $-2-2$
	\end{tabular}
\end{center}
omitting the factor of $(\mathsf{w}-\mathsf{w}^\prime)\omega$ in each entry of the last row due to space considerations.
\end{proof}

\subsection{Higher Morphisms}
We now extend the family of quasi-isomorphisms $f_{(g,n)}$, constructed above, to a  $(1,4)$-truncated $\infty$-quasi-isomorphism
\begin{equation*}
f\colon \DM \rightsquigarrow \mathfrak{p}h^{\mathsf{e,w}}_\ast(\op{M}).
\end{equation*}
For this we must define a map $f_\gamma$ for each $\gamma \in \Gamma(g,n)_{\geq 1}$, and we define this map to be $0$ unless $\gamma \in \Gamma(1,4)_1$.    
 The image of such an $f_\gamma$ will be supported on the span of $\omega$, and thus supported on total degree $-5$.  This means that $f_\gamma$ will be an equivariant map landing in $S_4$-invariants, and hence lifts to $S_4$-coinvariants.  Thus it remains to define 
\begin{equation}\label{oneedge}
\left[ \ds\bigoplus_{\gamma\in \Gamma(1,4)_1} \ \ \FTGK(H_\ast(\overline{\op{M}}))(\gamma)^{-5}_{Aut(\gamma)}\right]_{S_4}\to \Sigma^{-8}H_3(\op{M}_{1,4}).
\end{equation}

The source of the map in Equation $\ref{oneedge}$ is 15 dimensional.  It has a basis
whose elements are indexed by stable (1,4)-graphs $\delta_\bullet$ with two edges, along with a choice of a distinguished edge.    The indexing is given by labeling the vertices of the graph $\delta_\bullet$ with dual fundamental classes, and then nesting the distinguished edge $e$ so that $\gamma = \delta_\bullet/e$.  In particular the vertex of $\gamma$ corresponding to $e$ will thus be labeled with an element of $\FTGK(H_\ast(\overline{\op{M}}))$ having one edge.  The nine such graphs pictured in Figure $\ref{fig:14graphs}$ each contribute two such vectors, except $\delta_{2,2},\delta_{\beta}$ and $\delta_{\alpha}$ which contribute one each.

We denote this basis as follows. The direct sum in Equation $\ref{oneedge}$ splits over the underlying stable graphs. Using the notation given in Figure $\ref{fig:141graphs}$, we denote the stable (1,4)-graphs with one edge by $\mathsf{A}, \mathsf{B},\mathsf{C}$ and $\mathsf{D}$.  
For the summands corresponding to $\mathsf{A}, \mathsf{B}$ and $\mathsf{C}$, the basis vectors correspond to the vectors $\lambda_i$ for $1\leq i \leq 10$ -- each corresponds to a choice of graph in Figure $\ref{fig:14graphs}$ along with a distinguished edge and having vertices labeled by dual fundamental classes.  We denote these corresponding basis vectors by $\Lambda_i$.  On the summand corresponding to $\mathsf{D}$ we have five such vectors whose nested graphs are pictured in Figure $\ref{fig:24}$.  We denote these basis vectors by $\xi_j$ for $1\leq j \leq 5$.

\begin{figure}
	\centering
	\includegraphics[scale=.85]{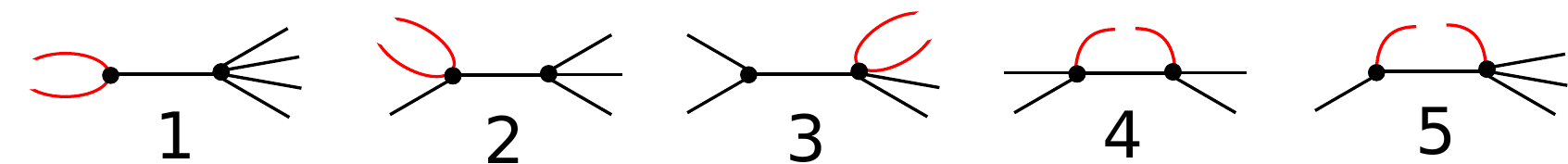}
	\caption{Basis vectors $\xi_j$ for $1 \leq j \leq 5$ are defined by labeling the vertices with dual fundamental classes, and joining the two curved legs colored red.  The pictured (black) edge is part of the nested graph which labels the lone vertex of $\mathsf{D}$ and so contributes $-1$ to the degree.}
	\label{fig:24}
\end{figure}

Using the direct sum decomposition, we now define the map in Equation $\ref{oneedge}$ as the direct sum of maps $f_\mathsf{A}, f_\mathsf{B}, f_\mathsf{C}$ and $f_\mathsf{D}$: 
\begin{align}\label{Lambdadef}
f_\mathsf{A}(\Lambda_i)=-\bar{f}^\omega(\lambda_i) & \text{ for } 1\leq i \leq 3 \\ \nonumber
f_\mathsf{B}(\Lambda_i)=-\bar{f}^\omega(\lambda_i) & \text{ for } 4\leq i \leq 6 \\ \nonumber
f_\mathsf{C}(\Lambda_i)=-\bar{f}^\omega(\lambda_i) & \text{ for } 7\leq i \leq 10
\end{align} 
and $f_\mathsf{D}(\xi_j) = c_j\omega$, where $c_j$ is given in the following table:
\begin{equation}
	\begin{tabular}{c|c|c|c|c|c} \label{fD}
		Vector &  $\xi_1$ & $\xi_2$ & $\xi_3$ & $\xi_4$ & $\xi_5$ \\ \hline
		coefficient $c_j$ & $\mathsf{e}/2 + 6\mathsf{w}$ & $\mathsf{e}/2 + 3\mathsf{w}$ & $\mathsf{w}$ & $-\mathsf{e}-\mathsf{2w}$ & $-3\mathsf{w}/2$
	\end{tabular}   		
\end{equation}
We emphasize that for a modular (1,4)-graph $\gamma$, the operation $f_\gamma$ is given by first passing to $S_4$ coinvariants and then applying the map above which corresponds to the underlying stable graph of $\gamma$.   This completes the definition of $f_\gamma$ and hence of $f$.  

\subsection{Differential condition}
Having defined the putative $\infty$-quasi-isomorphism $f$, we check the (remaining) differential conditions.  Denote the generating operations for the (weak) $\mathfrak{K}$-modular operad structure on the source of $f$ by $\nu$:
\begin{equation*}
\nu_\gamma\colon \FTGK(H_\ast(\overline{\op{M}}))(\gamma)_{Aut(\gamma)}\to \FTGK(H_\ast(\overline{\op{M}}))(\ast_\gamma)
\end{equation*}
In particular $\nu_\gamma$ is $0$ unless the graph $\gamma$ has 1 edge.  Denote, as above, the weak $\mathfrak{K}$-modular operad structure on the target by $
\mu_\gamma\colon \mathfrak{p}h^{\mathsf{e,w}}_\ast(\op{M})(\gamma)_{Aut(\gamma)}\to \mathfrak{p}h^{\mathsf{e,w}}_\ast(\op{M}_{\ast_\gamma})$.  We must verify the differential condition of Equation $\ref{inftydcond}$. 

If $\gamma$ is a graph of type less than $(1,4)$, this follows from the fact that the $(1,3)$-truncation of $f$ is a strict $\mathfrak{K}$-modular operad map, so we may restrict attention to graphs $\gamma$ of type $(1,4)$.  If such a $\gamma$ has 0 edges, the differential condition was verified in Corollary $\ref{dgcor}$.  If such a $\gamma$ has 3 or more edges, each term in the differential condition will vanish.  Thus it remains to consider the case when $\gamma$ is a modular $(1,4)$-graph having 1 or 2 edges.

{\bf Case 1: $\gamma \in \Gamma(1,4)_1$.}   Suppose $\gamma$ is a modular $(1,4)$-graph having 1 edge.  Any non-empty nest $N$ on $\gamma$ must also have 1 edge, hence $\gamma/N$ is the $(1,4)$-corolla.  Thus the first sum in Equation $\ref{inftydcond}$ is given by $\sum_{|N|=1} (f_{\gamma/N}\circ_N \nu_N ) =f_{(1,4)}\circ \nu_\gamma$. 

Consider now the second sum in Equation $\ref{inftydcond}$.  Since the internal differential in $\mathfrak{p}H_\ast(\op{M})$ is zero, the only non-zero term is $\mu_\gamma \circ \tensor f_\ast$, where $\tensor f_\ast$ is a product of corolla maps applied to each vertex of $\gamma$.  Recall the definition $f_{(1,4)}= f^0+f^\omega$.  Since $f^0$ is part of a strict $\mathfrak{K}$-modular operad map and $\gamma$ has one edge we know $f^0 \circ \nu_\gamma = \mu_\gamma \circ \tensor f_\ast$.
Therefore the differential condition of Equation $\ref{inftydcond}$ reduces to: 
\begin{equation}\label{d1}
f_\gamma \circ d_{\FTGK} = 
-(f^0+f^\omega) \circ \nu_\gamma + \mu_\gamma \circ f_\ast = - f^\omega \circ \nu_\gamma
\end{equation}
Since these maps both land in $S_4$-invariants, it's enough to verify their lifts to $S_4$-coinvariants coincide.

The maps in Equation $\ref{d1}$ are only non-zero when evaluated on $\DM(\gamma)^{0,-4}$ for $\gamma \in \Gamma(1,4)_1$.  Here we consider $\DM(\gamma)$ to be bigraded by the number of internal edges and the sum of the internal vertex degrees; the edge of $\gamma$ has degree $0$.
The $\mathfrak{K}$-twisted modular operad structure maps $\nu_\gamma$ of $\DM$ combine to give us a 
degree $-1$ map:
 \begin{equation*}
 \ds\bigoplus_{\gamma\in \Gamma(1,4)_1} \FTGK(H_\ast(\overline{\op{M}}))(\gamma)_{Aut(\gamma)}^{0,-4} \stackrel{\nu} \longrightarrow  \FTGK(H_\ast(\overline{\op{M}}))(1,4)^{-1,-4}.
 \end{equation*}
This map is tautologically an $S_4$-equivariant isomorphism.  
 Let $\bar{\nu}$ denote the induced isomorphism on $S_4$-coinvariants.

\begin{lemma}\label{lemftd}  The Feynman transform differential induces the differential
	\begin{equation*}
\left[\bigoplus_{\gamma \in \Gamma(1,4)_1}
\FTGK(H_\ast(\overline{\op{M}}))(\gamma)_{Aut(\gamma)}^{0,-4} 	\right]_{S_4}
	\stackrel{\hat{d}_{\FTGK}}\to	
	\left[\bigoplus_{\gamma \in \Gamma(1,4)_1} \FTGK(H_\ast(\overline{\op{M}}))(\gamma)^{-1,-4}_{Aut(\gamma)}\right]_{S_4}
	\end{equation*}
given in the basis	$\{\lambda_i \ | \ 1\leq i\leq 14\}$ by:
\begin{equation*}
\begin{tabular}{r|| c | c | c | c | c}
$\lambda_i$ &	$1\leq i \leq 10$ & $\lambda_{11}$ & $\lambda_{12}$ & $\lambda_{13}$ & $\lambda_{14}$ \\ \hline
 $\hat{d}_{\FTGK}(\bar{\nu}^{-1}(\lambda_i))$	& $\Lambda_i$ & $\xi_1+4\xi_5$ &   $\xi_2+2\xi_5$ &  $\xi_3+2/3\xi_5$ &  $2\xi_4-8/3\xi_5$
\end{tabular}.
\end{equation*}
\end{lemma}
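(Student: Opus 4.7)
The plan is to compute $\hat{d}_{\FTGK}(\bar{\nu}^{-1}(\lambda_i))$ directly for each of the $14$ basis vectors. Recall that $\bar{\nu}$ is the $\mathfrak{K}$-modular operad composition at the single edge of $\gamma \in \Gamma(1,4)_1$, so $\bar{\nu}^{-1}(\lambda_i)$ unfolds $\lambda_i$ back into a pair (or, for the loop $\mathsf{D}$, a single) of labeled corollas that recompose to give $\lambda_i$. Concretely, for $\lambda_i = \delta_\bullet^{\text{I}}$ or $\delta_\bullet^{\text{II}}$, the unfolded form is a one-edge labeled graph whose underlying stable type is $\mathsf{A}$, $\mathsf{B}$, $\mathsf{C}$, or $\mathsf{D}$, obtained by labeling the vertices of $\delta_\bullet$ with dual fundamental classes and then composing along the complementary edge. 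Since $d_{\FTGK}$ is the linear dual of the one-edge composition in $H_\ast(\overline{\op{M}})$, its action on a composed vertex label is the sum over all one-edge decompositions of that label, with coefficients given by the dual of the modular operadic structure constants.

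For $1 \leq i \leq 10$, the underlying one-edge graph is $\mathsf{A}$, $\mathsf{B}$, or $\mathsf{C}$, which has two distinct vertices. Exactly one vertex carries the composed label, and $d_{\FTGK}$ on this vertex expands it into a sum of one-edge decompositions. Among these, the one that recovers the complementary edge of $\delta_\bullet$ produces precisely $\Lambda_i$ with coefficient $1$, since $\Lambda_i$ is by definition the two-edge labeling of $\delta_\bullet$ by dual fundamental classes with the distinguished edge nested. All remaining expansions either collapse under the $Aut$-action (producing graphs with parallel edges whose swap acts by $-1$ on $\mathfrak{K}(\gamma)$), or land on stable $(0,n)$- or $(1,n)$-components whose one-edge dual decompositions fall outside the bidegree constraints of Lemma $\ref{deglem}$. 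The Feynman differential applied to the other vertex (which carries a genuine dual fundamental class) only contributes to graphs with parallel edges that similarly vanish modulo $Aut(\delta_\bullet)$-coinvariants.

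For $11 \leq i \leq 14$, the underlying graph is $\mathsf{D}$, the loop on four legs, and the entire label degree $-4$ sits on the single $(0,6)$ vertex. Here $d_{\FTGK}$ expands this label into all one-edge configurations within $\overline{\op{M}}_{0,6}$. The tautological expansion contributes an explicit $\xi_j$ term in each row (with coefficient $1$ for $j = 1, 2, 3$ from $\delta_{0,4}^{\text{II}}, \delta_{0,3}^{\text{II}}, \delta_{0,2}^{\text{II}}$, and coefficient $2$ for $j = 4$ from $\delta_\beta^{\text{II}}$; the factor of $2$ in the latter case reflects a doubling arising when combining the $S_{\{5,6\}}$-coinvariants with the $\delta_\beta$ graph automorphism). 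The remaining expansions produce stable $(0,6)$-configurations which, after taking $S_{\{5,6\}}$- and $S_4$-coinvariants with the $\mathfrak{K}^{-1}$-wedge signs, all project onto the basis vector $\xi_5$. A direct orbit count using the basis of $H_4(\overline{\op{M}}_{0,6})$ and the WDVV relations from the proof of Lemma $\ref{Qcomplex}$ produces the fractional coefficients $4$, $2$, $2/3$, and $-8/3$.

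The principal obstacle is this last bookkeeping step: enumerating all one-edge stable $(0,6)$-graphs that arise in the expansion of the composed label, correctly tallying their $\xi_5$-contributions under the combined $S_{\{5,6\}} \times S_4$-coinvariants, and accounting for the signs from the $\mathfrak{K}^{-1}$-wedge. The natural isomorphism of Equation $\ref{naturalisos}$ and the WDVV-generated basis of $H_\ast(\overline{\op{M}}_{0,6})$ make this combinatorics tractable, but it is the most delicate part of the argument.
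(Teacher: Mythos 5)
Your overall strategy---unfold $\lambda_i$ via $\bar{\nu}^{-1}$, apply the Feynman transform differential at the unique vertex not carrying a dual fundamental class, and expand the result in the basis $\{\Lambda_i\}\cup\{\xi_j\}$---is the same as the paper's, and your use of the degree bound to see that the target is spanned by fundamental-class-labeled configurations is a correct ingredient. But there are two genuine gaps. First, for $1\leq i\leq 10$ your stated reasons for the vanishing of all cross terms (parallel-edge cancellation in $\mathfrak{K}(\gamma)$, and bidegree constraints) do not cover the actual cases that need to be ruled out. The coefficient of $\Lambda_j$ in $\hat{d}_{\FTGK}(\bar{\nu}^{-1}(\lambda_i))$ is the pairing of $\lambda_i$ against the one-edge contraction $\delta_{\bullet(j)}^{\text{I/II}}$, and when $\lambda_i$ and $\Lambda_j$ live over the same one-edge graph these contractions sit in the \emph{same} summand of $\op{Q}_5$ with no parallel edges and in exactly the allowed bidegree. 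For example, $\lambda_7=(\delta_{2,4}^{\text{I}})^\ast$ pairs to zero with the composition class of $\delta_{2,3}^{\text{I}}$ only because the four compositions spanning summand $\mathsf{C}$ are linearly independent in $H_4(\overline{\op{M}}_{1,3})^{S_2}$, so that the dual basis is genuinely diagonal; this is the content of the summand-by-summand analysis in Lemma $\ref{Qcomplex}$ (itself resting on Lemma $\ref{13lem}$), which your argument never invokes for the $\mathsf{A},\mathsf{B},\mathsf{C}$ rows. Without that linear-independence input the claim that ``the one that recovers the complementary edge produces precisely $\Lambda_i$ with coefficient $1$'' and nothing else appears is unsupported.

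Second, for $11\leq i\leq 14$ you correctly identify that the $\xi_5$-coefficients come from the relation among compositions landing in $H_4(\overline{\op{M}}_{0,6})^{S_4}$, but you do not actually produce that relation; asserting that ``a direct orbit count using WDVV produces the fractional coefficients $4$, $2$, $2/3$, $-8/3$'' restates the table rather than proving it. The entire content of these four columns is the explicit identity $\rho_5^1+\rho_5^2=4\rho_1+2\rho_2+\tfrac{2}{3}\rho_3-\tfrac{4}{3}\rho_4$ among the six tree compositions (together with the normalization $\circ_{56}(\rho_4^\ast)=\lambda_{14}/2$ coming from the $3$-versus-$6$ leg-labeling count for $\delta_\beta$), and some version of this computation must appear for the lemma to be proved. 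As written, the proposal sets up the right bookkeeping framework but omits the two calculations---linear independence in summands $\mathsf{A},\mathsf{B},\mathsf{C}$ and the WDVV relation in summand $\mathsf{D}$---that actually determine the matrix.
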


\begin{proof}	 
	
		Each $\bar{\nu}^{-1}(\lambda_i)$ specifies a graph $\gamma$ 	
	with exactly one vertex $v$ not labeled by a dual fundamental class.
	The induced differential $\hat{d}_{\FTGK}$ is given by applying the Feynman transform differential at the vertex $v$.  By Lemma $\ref{Qcomplex}$ there are no relations among possible one edged compositions labeling $v$ if the graph $\gamma$ is of type  $\mathsf{A}$, $\mathsf{B}$, or $\mathsf{C}$.  Thus for $1\leq i \leq 10$, the vector $\hat{d}_{\FTGK}(\bar{\nu}^{-1}(\lambda_i))$ is given by a stable $(1,4)$-graph $\eta$ labeled by dual fundamental classes, and having a distinguished edge $e$ (from applying the Feynman transform differential) satisfying $\eta/e = \gamma$. This was exactly the definition of $\Lambda_i$ above.
	
	When $i \geq 11$, we have $\gamma = \mathsf{D}$ which has a unique vertex whose legs are partitioned in to a set of size 4 and a set of size 2, which we write as $\{1,2,3,4\}$ and $\{5,6\}$ by convention.  Thus the differential $\hat{d}_{\FTGK}$ is induced by taking $S_4$ coinvariants of the following commutative diagram: 
		\begin{equation*}
	\xymatrix{ &	H_4(\overline{\op{M}}_{0,6})^\ast \ar[d]^{Aut(\mathsf{D})} \ar[r]^{d_{\FTGK} \ \ \ \ \ \ \ } \ar[ld]_{\circ_{56}} &  \ar[d]^{Aut(\mathsf{D})}	\DM(0,6)^{-1,-4}  \\
		\DM(1,4)^{-1,-4}  & \ar[l]_{ \ \ \nu|_{\mathsf{D}}} \DM(\mathsf{D})_{Aut(\mathsf{D})}^{0,-4}	 \ar@{.>}[r]^{\hat{d}_{\FTGK} \ \ }  & \DM(\mathsf{D})_{Aut(\mathsf{D})}^{-1,-4} }
	\end{equation*}
	This map is in turn specified by dualizing the cyclic operadic composition map
\begin{equation*}
[\bigoplus_{T\in \Gamma(0,6)} H_\ast(\overline{\op{M}})(T)]^{S_4} \to H_4(\overline{\op{M}}_{0,6})^{S_4}.
\end{equation*}
The source of this map is 6 dimensional: it has a basis given by trees as in Figure $\ref{fig:24}$ along with a labeling of the red legs by 5 and 6.  Composing along these trees produces 6 vectors in $
H_4(\overline{\op{M}}_{0,6})^{S_4}$  which we call $\rho_i$, corresponding to the trees labeled $i=1,2,3,4$, and $\rho_5^1, \rho_5^2$ corresponding to the two labelings of the fifth graph in Figure $\ref{fig:24}$.  Calculating with the WDVV equation, these 6 compositions are related by the formula 
$\rho^1_5+\rho^2_5= 4 \rho_{1} +2\rho_2+2/3\rho_3 - 4/3\rho_4. $

Fix the basis $\rho_i,\rho_5^1$ and dualize. In the Feynman transform $\DM$ we have $\circ_{56}(\rho_1^\ast)=\lambda_{11}$, 
$\circ_{56}(\rho_2^\ast)=\lambda_{12}$,
$\circ_{56}(\rho_3^\ast)=\lambda_{13}$, and
$\circ_{56}(\rho_4^\ast)=\lambda_{14}/2$.  This factor of 1/2 comes from the fact that $\lambda_{14}$ was defined by dualizing an $S_4$ equivariant vector formed by the three ways to label the graph $\delta_\beta$, where as $\rho_4$ is the $S_4$ equivariant vector formed by the 6 ways to label the black legs in the fourth tree in Figure $\ref{fig:24}$.  Gluing the red legs together produces each labeling of $\delta_\beta$ twice.  It remains to simply chase through the diagram to arrive at the stated result. \end{proof}

\begin{corollary}  The one-edge differential condition (Equation $\ref{d1}$) is satisfied. 
\end{corollary}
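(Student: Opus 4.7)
The plan is to verify the identity $f_\gamma\circ d_{\FTGK}=-f^\omega\circ \nu_\gamma$ for each $\gamma\in\Gamma(1,4)_1$ by reducing to a finite computation on an explicit basis. Both sides of Equation $\ref{d1}$ take values in the one-dimensional $S_4$-invariant subspace spanned by $\omega$, so it suffices to verify the equality of the lifts to $S_4$-coinvariants. Since $\bar\nu$ is an $S_4$-equivariant isomorphism, I would further reduce to checking the equality of both sides after precomposition with $\bar\nu^{-1}$, applied to the fourteen basis vectors $\lambda_i$ of $[\FTGK(H_\ast(\overline{\op{M}}))(1,4)^{-1,-4}]_{S_4}$.

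First I would handle the right-hand side. By construction, $f^\omega$ is the lift of $\bar f^\omega\colon\lambda_i\mapsto t_i\omega$ with the scalars $t_i$ recorded in Equation $\ref{t}$, so precomposition with $\bar\nu^{-1}$ yields $-t_i\omega$ on each basis element. For the left-hand side, Lemma $\ref{lemftd}$ identifies $\hat d_{\FTGK}(\bar\nu^{-1}(\lambda_i))$: for $1\leq i\leq 10$ this equals $\Lambda_i$, while for $i=11,12,13,14$ it is the explicit combination of $\xi_1,\ldots,\xi_5$ listed in the table.

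For $1\leq i\leq 10$, the definition of $f_\mathsf{A},f_\mathsf{B},f_\mathsf{C}$ in Equation $\ref{Lambdadef}$ gives $f_\gamma(\Lambda_i)=-\bar f^\omega(\lambda_i)=-t_i\omega$, matching the right-hand side tautologically. The only genuine computation is for $i=11,12,13,14$, where I must verify
\begin{align*}
c_1+4c_5&=-t_{11},\quad c_2+2c_5=-t_{12},\\
c_3+\tfrac{2}{3}c_5&=-t_{13},\quad 2c_4-\tfrac{8}{3}c_5=-t_{14},
\end{align*}
using the values $c_j$ from Equation $\ref{fD}$ and $t_i$ from Equation $\ref{t}$. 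Each of these is a one-line check: for example $c_1+4c_5=(\mathsf{e}/2+6\mathsf{w})+4(-3\mathsf{w}/2)=\mathsf{e}/2=-t_{11}$, and similarly for the remaining three identities, where the $\mathsf{w}$-terms cancel and the $\mathsf{e}$-terms reproduce $-t_i$ precisely.

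There is essentially no obstacle here beyond arithmetic; the bookkeeping in Lemma $\ref{lemftd}$ (in particular the factor $1/2$ in the normalization of $\lambda_{14}$ relative to $\rho_4$) has already absorbed all the combinatorial content. The coefficients $c_j$ of Equation $\ref{fD}$ were in fact reverse-engineered so that these four relations hold, which is the whole reason $f_\mathsf{D}$ takes the particular form it does. Thus the corollary follows immediately once the table in Lemma $\ref{lemftd}$ is in hand.
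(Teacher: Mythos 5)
Your proposal is correct and follows exactly the paper's argument: reduce to $S_4$-coinvariants, evaluate both sides on $\bar\nu^{-1}(\lambda_i)$ using Lemma \ref{lemftd}, and compare with the table in Equation \ref{t}; your four arithmetic checks for $i=11,\dots,14$ all come out right. The paper's own proof is just the one-line "combine Lemma \ref{lemftd} with Equation \ref{t}," so you have simply made the same computation explicit.
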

\begin{proof}  
	Let $\bar{f}_{\gamma}$ denote the lift of $f_\gamma$ to $S_4$-coinvariants.  By Equation $\ref{d1}$,  it is sufficient to show $\bar{f}_{\gamma}(d_{\FTGK}(\bar\nu^{-1}(\lambda_i))) = -\bar{f}^\omega (\lambda_i)$. 	
	This follows by combining Lemma $\ref{lemftd}$ with Equation $\ref{t}$.
\end{proof}

{\bf Case 2: $\gamma \in \Gamma(1,4)_2$.} We now verify Equation $\ref{inftydcond}$ holds when $\gamma$ has two edges. 
The differential terms are as follows.  In the first sum each non-empty nest must have 1 edge, hence $\gamma/N$ has 1 edge.  So we get two terms, one for each nest, of the form $f_{\gamma/N}\circ\nu_N$.  
In the second sum, we have the non-zero term $\mu_\gamma \circ \tensor f_\ast.$ 
Any other term in this sum would involve $f_{\gamma^\prime}$ where $\gamma^\prime$ is a proper subgraph of $\gamma$, and so has type less than $(1,4)$ with one edge, but such operations were defined to be zero.

Therefore, if $\gamma$ is a modular $(1,4)$ graph with edges $\text{I}$ and $\text{II}$ we must verify the constraint:
\begin{equation*}
\mu_\gamma \circ \tensor f_\ast = f_{\gamma/\text{I}} \circ \nu_\text{I} + f_{\gamma/\text{II}} \circ \nu_\text{II}.
\end{equation*} 
As above, since these functions are supported on $S_4$ invariants, it's enough to verify their passages to $S_4$ coinvariants coincide.  Compiling information from above (Equations $\ref{t}$, $\ref{Lambdadef}$, $\ref{fD}$ and Definition $\ref{opsdef}$), we verify this constraint in the following table:

\begin{center}
	\begin{tabular}{r|c||c|c|c|c}
		higher operation & coefficient & apply $\nu_\text{I}$ & apply $f_{\gamma/\text{I}}$ &	apply $\nu_\text{II}$ & apply $f_{\gamma/\text{II}}$  \\ \hline
		$\mu_{2,2}$ & $12\mathsf{e}$ & $\Lambda_{10}$ & $6\mathsf{e}$  & $\Lambda_{10}$ & $6\mathsf{e}$	\\
		$\mu_{2,3}$ & $-4\mathsf{e}$ & $\Lambda_9$ & $-2\mathsf{e}$  & $\Lambda_6$& $-2\mathsf{e}$	\\
		$\mu_{3,4}$ & $6\mathsf{e}$ & $\Lambda_5$ &  $3\mathsf{e}$ & $\Lambda_1$ & $3\mathsf{e}$	\\
		$\mu_{2,4}$ & $-2\mathsf{e}$ & $\Lambda_7$ & $-\mathsf{e}$  & $\Lambda_2$ & $-\mathsf{e}$	\\
		$\mu_{0,4}$ & $\mathsf{e}+6\mathsf{w}$ & $\Lambda_3$ & $\mathsf{e}/2$  &  $\xi_1$& $\mathsf{e}/2+6\mathsf{w}$	\\
		$\mu_{0,3}$ & $\mathsf{e}+3\mathsf{w}$ & $\Lambda_4$  & $\mathsf{e}/2$  & $\xi_2$ & $\mathsf{e}/2 + 3\mathsf{w}$	\\
		$\mu_{0,2}$ & $\mathsf{w}$ & $\Lambda_8$ &  $0$ & $\xi_3$ & $\mathsf{w}$	\\
		$\mu_{\beta}$ & $-2\mathsf{e}-4\mathsf{w}$ &   $\xi_4$ &  $-\mathsf{e}-2\mathsf{w}$ & $\xi_4$ & $-\mathsf{e}-2\mathsf{w}$	 \\
				$\mu_{\alpha}$ & $-3\mathsf{w}$ & $\xi_5$  & $-3\mathsf{w}/2$  & $\xi_5$	&  $-3\mathsf{w}/2$
	\end{tabular}   		
\end{center}

This concludes the proof of:

\begin{theorem}  As constructed above, 
\begin{equation*}
f\colon\DM\stackrel{\sim}\rightsquigarrow \mathfrak{p}h^{\mathsf{e,w}}(\op{M})
\end{equation*}
	is an $\infty$-quasi-isomorphism of (1,4)-truncated $\mathfrak{K}$-modular operads.
\end{theorem}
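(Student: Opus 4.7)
The plan is to verify the two requirements separately: (a) that $f$ satisfies the $\infty$-morphism differential condition of Equation \ref{inftydcond}, and (b) that the corolla components $f_{(g,n)}$ are quasi-isomorphisms for each $(g,n) \leq (1,4)$.

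For (a), I would stratify by biarity $(g,n)$ of the ambient modular graph $\gamma$ and by its edge count. When $(g,n) < (1,4)$, the map $f_\gamma$ agrees with the strict $\mathfrak{K}$-modular operad map $f^0$ from Equation \ref{zeta}; the strictness forces all terms of Equation \ref{inftydcond} involving nests of more than one edge to vanish, and the remaining identity is simply the condition that $f^0$ is a dg $\mathfrak{K}$-modular operad map. When $(g,n) = (1,4)$, I would consider four sub-cases indexed by the number of edges of $\gamma$: (i) the corolla, which reduces to $f_{(1,4)} \circ d_{\FTGK} = 0$, already verified in Corollary \ref{dgcor}; (ii) $\gamma \in \Gamma(1,4)_1$, where the condition collapses to Equation \ref{d1}; (iii) $\gamma \in \Gamma(1,4)_2$, the richest case; and (iv) $\gamma$ with at least three edges, where every candidate factor $f_{N_i}$ or $f_{\gamma/N}$ vanishes by the support conditions on $f$, so both sides of Equation \ref{inftydcond} are identically zero.

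For sub-case (ii), I would pass to $S_4$-coinvariants (since all relevant maps land in $S_4$-invariants) and use Lemma \ref{lemftd} to rewrite the induced differential on the basis $\{\bar\nu^{-1}(\lambda_i)\}$; combining this with the coefficients $t_i$ from Equation \ref{t} defining $\bar f^\omega$, and with the explicit values of $f_{\mathsf{A}}, f_{\mathsf{B}}, f_{\mathsf{C}}, f_{\mathsf{D}}$ on $\Lambda_i$ and $\xi_j$, the equation $f_\gamma \circ d_{\FTGK} = -f^\omega \circ \nu_\gamma$ becomes a finite linear-algebra check. For sub-case (iii), I would enumerate the nine $S_4$-orbits in $\Gamma(1,4)_2$ (Figure \ref{fig:14graphs}), and for each compute both $\mu_\gamma \circ \otimes f_\ast$, a scalar multiple of $\omega$ read from Definition \ref{opsdef}, and $f_{\gamma/\mathrm{I}} \circ \nu_{\mathrm{I}} + f_{\gamma/\mathrm{II}} \circ \nu_{\mathrm{II}}$, read from the tables defining $f_{\mathsf{A}},\ldots,f_{\mathsf{D}}$, and verify equality row by row.

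The main obstacle will be sub-case (iii): it is there that the specific coefficients $c_\bullet(\mathsf{e},\mathsf{w})$ of Definition \ref{opsdef} are forced. Taking the nine equalities simultaneously, the condition amounts to the statement that $\sum c_\bullet \delta_\bullet^\ast$ pairs correctly with $f^\omega$, which by the cokernel identification in Lemma \ref{hlem} and the presentation of $\op{Q}_4$ in Lemma \ref{Qcomplex} is a two-parameter family of boundary relations in $H_4(\overline{\op{M}}_{1,4})^{S_4}$: one copy of Getzler's elliptic relation (weighted by $\mathsf{e}$) plus one WDVV manifestation (weighted by $\mathsf{w}$). The nine rows of the verification table match precisely because the triples $(c_\bullet)$, $(t_i)$, and $(c_j)$ were engineered from these two relations, and the bookkeeping of which nest contracts to $\mathsf{D}$ versus to another one-edged graph exactly splits the nine graphs into the five-plus-four pattern visible in the table. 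Once (a) is in hand, (b) is immediate: the corolla components are the $f_{(g,n)}$, each already shown to be a quasi-isomorphism — Corollary \ref{13qi} handles $(g,n) < (1,4)$, and the quasi-isomorphism lemma immediately after Corollary \ref{dgcor} handles $(g,n) = (1,4)$.
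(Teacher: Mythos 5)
Your proposal follows the paper's own argument essentially step for step: the same stratification of the differential condition by biarity and edge count (strictness below $(1,4)$, Corollary \ref{dgcor} for the corolla, Equation \ref{d1} with Lemma \ref{lemftd} for one edge, the nine-row table for two edges, vanishing for three or more), and the same appeal to Corollary \ref{13qi} and the quasi-isomorphism lemma following Corollary \ref{dgcor} for the corolla components. Your added remark that the two-edge case is where the coefficients $c_\bullet(\mathsf{e},\mathsf{w})$ are forced, via the elliptic and WDVV relations, matches the paper's own a posteriori explanation in the introduction.
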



\end{document}